\let\cite=\citet
\newcommand{\NN}{{\mathbb N}}
\newcommand{\ZZ}{{\mathbb Z}}
\newcommand{\EE}{{\mathbb E}}
\newcommand\1{\leavevmode\hbox{\rm \small1\kern-0.35em\normalsize1}}
\def\A{{\mathcal A}}
\def\F{{\mathcal F}}
\def\M{{\mathcal M}}
\newcommand{\XX}{{\mathcal X}}
\renewcommand{\ge}{\geqslant}
\def\blfootnote{\xdef\@thefnmark{}\@footnotetext}
\numberwithin{equation}{section}
\theoremstyle{plain}
\newtheorem{Theo}{Theorem}
\newtheorem{Prop}{Proposition}
\newtheorem{Cor}{Corollary}
\newtheorem{Lem}{Lemma}
\newtheorem{Claim}{Claim}
\newtheorem{Not}{Notation Alert}
\begin{document}

\title[Non-regular $g$-measures and variable length memory chains]{Non-regular $g$-measures and variable length memory chains}
\author{Ricardo F. Ferreira}
\author{Sandro Gallo}
\author{Fr\'ed\'eric Paccaut}
\begin{abstract}
It is well-known that there always exists at least one stationary measure compatible with a continuous $g$-function $g$. Here we prove that if the set of discontinuities of a $g$-function $g$ has null measure under a candidate measure obtained by some asymptotic procedure, then this candidate measure is compatible with $g$. We explore several implications of this result, and discuss comparisons with the literature concerning assumptions and examples. 

Important part of the paper is dedicated to the case of variable length memory chains, for which we obtain existence, uniqueness and weak-Bernoullicity (or $\beta$-mixing) under new assumptions. These results are specially designed for variable length memory models, and do not require uniform continuity.

We also provide a further discussion on some related notions, such as random context processes,  non-essential discontinuities and  everywhere discontinuous stationary measures.

\end{abstract}

\maketitle\blfootnote{{\it MSC 2010}: 60J05, 37E05.}\blfootnote{{\it Keywords}: $g$-measure, existence/uniqueness, probabilistic context tree, variable length memory}

\section{Introduction}

The present paper is about $\mathbb Z$-indexed stationary stochastic processes with finite alphabet, or equivalently, the translation invariant measures on space of bi-infinite sequences of symbols. We are mainly interested in the questions of existence, uniqueness and mixing properties of such measures when we are given a set of conditional rules, that is, we consider the following basic questions: given a set of transition probabilities (or a probability kernel) from ``infinite pasts'' to ``present symbols'' of the alphabet,
\begin{enumerate}
\item does it exist a stationary process whose conditional probabilities are given by this kernel?
\item if yes, is it unique, and what more can we say about its statistical properties?
\end{enumerate}
Naturally, a well-known special case is that of Markov chains (of any finite order), for which the set of transition probabilities is given by a transition matrix. For such models, the question of existence is automatically solved ({recall that we consider finite alphabets}), and the question of uniqueness is simply a question of irreducibility. In the present paper, we rather  consider non-Markovian cases, which means that we are given sets of  transition probabilities  that may depend on unbounded parts of the pasts. Chains specified by such sets of transition probabilities would therefore have unbounded, or even possibly infinite, memory.  

Such objects were first studied in the literature of stochastic processes under the name \emph{cha\^ines \`a liaisons compl\`etes} (chains with complete connections) coined in the seminal works of \cite{onicescu/mihoc/1935} and \cite{doeblin/fortet/1937}. Since the 70's and the work of \cite{keane/1972}, it has been paralleled by the literature of $g$-measures in the field of ergodic theory \citep[just to mention some of them]{Ledrappier,walters/1975,hulse/1991,johansson/oberg/2003}. The set of transition probabilities is sometimes called  probability kernel and usually denoted by $P$ in the stochastic processes literature, while in the literature of ergodic theory, it is called a $g$-function, sometimes itself denoted by $g$. In any way, in our terminology,  chains with infinite memory and $g$-measures are the same objects: the $g$-measure is simply the law of the stationary stochastic process having function $g$ as set of transition probabilities.

\vspace{0.3cm}

Question 1 and 2 above were answered, in both literatures, essentially under the assumption that the $g$-function is continuous (this implies existence), strongly positive, and  eventually with a rapidly vanishing variation if we wish to have uniqueness. In the last 10 years, several works \citep{Desantis/piccioni/2012,gallo/garcia/2013,gallo/paccaut/2013,imbuzeiro/2015} have studied the case of possibly discontinuous $g$-functions (kernels) under several perspectives, including the existence problem, the uniqueness problem and further properties such as perfect simulation, mixing properties and statistical inference. 
The present paper is more in the vein  (and indeed can be considered a sequel)  of \cite{gallo/paccaut/2013}.

With respect to question 1 stated above, loosely speaking, the main problem is to find a characterisation of the smallness of the set $\mathcal{D}_g$ of discontinuities of $g$  guaranteeing existence.  For instance, \cite{gallo/paccaut/2013} asked that this set has negative topological pressure (see \eqref{eq:top_entropy} for a definition). Having in mind the fact that  most examples of stationary measures in the literature give measure 0 to $\mathcal{D}_g$, it is natural to wonder if it is sufficient to find a candidate measure that does not weight $\mathcal{D}_g$ to get existence. This is precisely the statement of our first main result: if the set of discontinuities of $g$ is given measure $0$ by some candidate measure obtained by a classical asymptotic procedure, then this candidate measure is a stationary compatible measure for $g$.  This result is stronger than any general existence result of the literature that we are aware of, and answers question 1 in an essentially optimal way as we will also explain. We also give several corollaries under more explicit conditions on $g$ and some examples of application. A notable result, for instance, is the fact that if $\mathcal{D}_g$ is countable, and if $g$ is uniformly bounded away from $0$, then existence is granted. We will also provide  a discussion/comparison of several notions of smallness of $\mathcal{D}_g$ (we already mentioned three, which are negative topological pressure, null measure under some candidate measure, and enumerability). 

\vspace{0.3cm}

The second part of the paper focuses on a specific class of $g$-functions, which are called \emph{probabilistic context trees}. In words, a probabilistic context tree is a $g$-function having the property that, for some pasts, we only need to look at a finite suffix of the past in order to obtain the transition probability from the past to  symbols of $A$, and the size of the size of this suffix  is a deterministic function of the past. The set of these suffixes can be represented as a rooted tree that we call \emph{context tree}, which encodes the dependence on the past of the $g$-function. When there exists a stationary stochastic process specified by this $g$-function, we say it is a \emph{variable length memory chain}. Probabilistic context trees are the archetypical examples of locally continuous $g$-functions, because continuity is only assumed along pasts having finite size contexts. 

This class of processes was initially introduced in the information theory literature by \cite{rissanen/1983} as an efficient compression model. Since then, it has been essentially used in mathematical statistics papers as a flexible class of stochastic processes to model real life datas  \citep[for instance]{galves/galves/garcia/garcia/leonardi/2012, belloni/imbuzeiro/2017, cai2017inferring}. On the theoretical side, \cite{gallo/2009} seems to be the first work interested in this model under the perspective of stochastic processes. This class has been also used as a tool to study general $g$-measures, for instance in \cite{gallo/garcia/2013, gallo/paccaut/2013, garivier/2015, imbuzeiro/2015}. Finally,  some recent works have explored the relation with dynamical systems \citep{CCPP} or used this class to create interesting random walk models \citep{lenyeta/2018,lenyetal/2019}.

In the class of variable length memory chains, we obtain results ranging from existence to uniqueness and mixing properties (precisely, $\beta$-mixing, or weak-Bernoullicity). All these results are stated under assumptions on the length of the contexts,  meaning that these results are specially designed for variable length memory chains. Indeed, under our assumptions, only the set of pasts having finite contexts are assumed continuous, and in particular, there is no need for uniform continuity to get existence nor uniqueness. 

\vspace{0.3cm}

The paper is organised as follows. In the next section we introduce the necessary notation and definitions, explain the existence problem and state our first main existence results. In Section \ref{sec:vlmc} we apply the results of Section \ref{sec:main} to the case of variable length memory chains, and present also some results on uniqueness and mixing properties in this context. Section \ref{sec:examples} contains explicit examples (as well as a counterexample), and we provide an interesting discussion of further related questions in Section \ref{sec:discussion}. We conclude giving the proofs of the results in Section \ref{sec:proofs} and an appendix containing proofs of combinatorial results that are used in our discussions.

\section{$g$-functions and existence of $g$-measures}\label{sec:main}

\subsection{Notation} Consider the measurable space $(A,\A)$ where $A$ is a finite set (the alphabet) and $\A$ is the associated discrete $\sigma$-algebra. In this paper we use the convention that $\mathbb{N}=\{0,1,2,\ldots\}$. The sets of left-infinite sequences (pasts)  and of bi-infinite sequences are respectively denoted by $\mathcal{X}^-=A^{-\NN^\star}$  and ${\mathcal{X}}=A^\ZZ$ and will be endowed with the products of discrete topology. More precisely, for any integers $-\infty\leqslant m\leqslant n\leqslant +\infty$, let $\pi^{[m,n]}$ denote the coordinate application taking $x=\ldots x_{-1}x_0x_1\ldots\in\mathcal X$ to  $\pi^{[m,n]}(x)=x_m\ldots x_n\in A^{n-m+1}$. We will use  the notation $\mathcal{F}^{[m,n]}=\sigma(\pi^{[m,n]})$ for the $\sigma$-algebra generated by the coordinate applications $\pi^{[m,n]}$. 
  In particular, $\mathcal{F}^{[m,n]}$ will be denoted $\mathcal{F}^{\leqslant n}$ if $m=-\infty$, $\mathcal{F}^{\geqslant m}$ if $n=+\infty$ and simply $\mathcal{F}$ if $m=-\infty$ and $n=+\infty$. A case that will be more important for us in the sequel is that of  $\mathcal{F}^{\leqslant -1}$ to which we also give the simpler notation $\mathcal F^-$. We will use the shorthand notation $x_m^n=x_m\ldots x_n$ for any integers $-\infty\leqslant m\leqslant n\leqslant +\infty$.  For any pasts $\underline x:=x_{-\infty}^{-1}\in \mathcal{X}^-$ and $\underline z\in \mathcal{X}^-$ and any $k\geqslant0$,  $\underline zx_{-k}^{-1}=\ldots z_{-2}z_{-1}x_{-k}\ldots x_{-1}$ is the concatenation between $x_{-k}^{-1}$ and $\underline z$.
In other words, $\underline zx_{-k}^{-1}$ denotes a new sequence $\underline y\in \mathcal{X}^-$ defined by $y_{i}=z_{i+k}$ for any $i\leqslant -k-1$ and $y_{i}=x_{i}$ for any $-k\leqslant i\leqslant -1$. Finally, we denote by $|v|$ the number of symbols in the finite string $v$, that is, its length.

Denote by ${\M}$ the set of Borelian probability measures on $({\mathcal{X}},\mathcal F)$.
The shift map $T$ on ${\mathcal{X}}$ is defined by $(T{x})_n=x_{n+1}$. A measure $\mu\in\M$ satisfying $\mu=\mu\circ T^{-1}$ is called shift-invariant, or simply invariant or stationary. We denote by $\M_T$ the set of stationary measures in $\M$.

For any finite string $a_m^n$, the cylinder set is denoted $[a_m^n]:=\{x\in\mathcal X:x_m^n=a_m^n\}$ for any $-\infty<m\leqslant n<+\infty$. For any measure $\mu\in\mathcal M$, we will abuse notation by writing $\mu(a_m^n)$ instead of $\mu([a_m^n])$. We may also want to locate a string without making it explicit through its indexes. For instance,  for any  finite string of symbols $v\in A^n,n\geqslant1$, we will use the notation $[v]_i:=\{x\in\mathcal X:x^i_{i-n+1}=v^i_{i-n+1}\}$ in order to locate it. Naturally, if $\mu\in\mathcal{M}_T$, then $\mu([v]_i)$ does not depend on $i$ and we may simply write $\mu(v)$.

\subsection{$g$-functions} The central object of the present work is the \emph{$g$-function}. A $g$-function is  a probability kernel on $A$, specifying the probabilities of transition from infinite pasts to symbols of $A$, just as the extension of a transition matrix for Markov chains:  $g:\mathcal{X}^{\le0}\to [0,1]$ is $\F^{\le0}$-measurable and
\begin{equation}\label{g-function}
\forall \underline x\in \mathcal{X}^{-},\,\,\, \sum_{x_0\in A}g(\underline xx_0)=1.
\end{equation}

\begin{Not} \begin{itemize}\item We will abuse notation by allowing $g$ to act on $\mathcal{X}^{\leqslant i}$ for any $i\in \mathbb Z$ as follows: for any $x\in \mathcal{X}$, $g(x_{-\infty}^{i})=g((T^{i}x)_{-\infty}^{0})$, which entails, in particular, homogeneity. 
\item For any $k\in\mathbb Z,n\geqslant1$ any $a_{k+1}^{k+n}\in A^n$ and any $x_{-\infty}^{k}$, we use the notation
\[g_n(x_{-\infty}^{k}a_{k+1}^{k+n}):=\prod_{i=1}^{n}g(x_{-\infty}^{k}a_{k+1}^{k+i}).\]
\end{itemize}\end{Not}

As simple examples, $g$ is just a probability distribution on $A$ if it only depends on the last coordinate of its argument, and it is a $k$-steps Markov transition matrix if it depends only on the $k+1$ last coordinates.

The role of $g$ will be to describe the conditional expectation of present given past for measures on $\mathcal{X}$, just as a transition matrix describes the conditional probabilities of the present given the last step for a Markov chain. However, unlike in the finite memory case (that is, unlike the case of $k$-step transition matrix with $k<\infty$), it is not obvious that a given $g$-function describes the conditional expectation of some stationary measure. This is the \emph{existence problem}, that we now address. 
%
%

\subsection{The existence problem}
Given a $g$-function $g$, a stationary measure $\mu$ is said to be compatible with (or specified by) $g$ if
\begin{equation}\label{eq:compatible}
{\EE}_{\mu}(\1_{[a]_0}|\F^{-})({x})=g(\underline xa)
\end{equation}
for $\mu$ almost every $x$ and for every $a\in A$. Let $\M_T(g)$ denote the set of stationary measures compatible with $g$. 
The \emph{existence problem} is to find the weakest conditions on $g$ ensuring that $\M_T(g)\neq\emptyset$. 

The main assumption of the literature to solve this issue is that $g$ is continuous at \emph{every} past $\underline x$. We say that $g$ is continuous at  $\underline x$ if for any $a\in A$ and any sequence $\{\underline x^{(k)}\}_{k\geqslant1}$ in $\mathcal{X}^-$ satisfying  $\underline x^{(k)}\rightarrow \underline x$ (with respect to the discrete topology), we have
\[
g(\underline x^{(k)}a)\stackrel{k\rightarrow\infty}{\longrightarrow} g(\underline xa).
\] 
Another traditional assumption of the literature is that $\inf g>0$ (strong positivity), which, combined with continuity, leads to what is sometimes called ``regular $g$-function''. 

The first main objective of the present paper is to solve the existence problem in the case of non-regular $g$-functions. It is worth mentioning here that the expression ``non-regular $g$-measure'', used in the title of the paper, is an abuse of terminology: what is non-regular here is the $g$-function. This abuse nevertheless involves some subtleties that we will discuss in Section \ref{sec:discussion}. {In any case, it is important to emphasise that strong positivity alone is not enough to grant existence (see the example of Subsection \ref{sec:non-exist-positive}), and it is necessary to give conditions on the set $\mathcal{D}_{g}\subset \mathcal{X}^-$ of discontinuities of $g$ in order to guarantee existence. }

 Theorem \ref{theo:exist} below  gives a new and very natural condition on  $\mathcal{D}_{g}$ ensuring that $\mathcal{M}_T(g)\neq\emptyset$. {This result does not assume $\inf g>0$, that is, applies to truly non-regular $g$-functions.}
%
%
%

\subsection{A general existence result}

Let us first construct a natural candidate measure by an asymptotic procedure, using compactness. By Ionescu-Tulcea's Theorem, we know that for any $\underline x\in \mathcal{X}^-$, there exists a unique measure  $\mu^{\underline{x}}\in \M$ satisfying $\mu^{\underline{x}}(C)=\1_{C}(\underline x)$ for any $C\in \F^-$ and, for any $n\geqslant1$ and any $a_0^{n-1}\in A^n$
\begin{equation}\label{eq:ionescu}
\mu^{\underline{x}}(a_{0}^{n-1})=g_n(\underline xa_0^{n-1}).
\end{equation}
In words, this is the measure started with $\underline x$ on $(-\infty-1]$ (Dirac measure on $\underline{x}\in\mathcal{X}^-$), and which we construct iteratively forward (on $[0,+\infty)$) using $g$. Then, let $\mu^{\underline{x},0}:=\mu^{\underline{x}}$ and $\mu^{\underline{x},-i}:=\mu^{\underline{x}}\circ T^{-i}$ for any $i\geqslant1$. 
By compactness of $\M$ (under the topology of weak convergence), the sequence of measures 
\begin{equation}\label{eq:def_barmu}
\bar\mu^{\underline{x},-k}:=\frac{1}{k}\sum_{i=0}^{k-1}\mu^{\underline{x},-i},k\geqslant1
\end{equation}
converges by subsequence. Let $\bar\mu^{\underline{x}}$ be a limit. We have the following theorem. 
{\begin{Theo}\label{theo:exist}
Suppose   that  there exists an $\underline x$ and a limit $\bar\mu^{\underline{x}}$ such that  $\bar\mu^{\underline{x}}(\mathcal{D}_g)=0$. Then $\bar\mu^{\underline{x}}\in\M_T(g)$ which is therefore non-empty.
\end{Theo}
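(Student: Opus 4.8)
The plan is to check the two requirements for $\bar\mu^{\underline x}\in\M_T(g)$: that $\bar\mu^{\underline x}$ is shift-invariant, and that it satisfies the compatibility identity \eqref{eq:compatible}; the second is where the hypothesis $\bar\mu^{\underline x}(\mathcal{D}_g)=0$ enters. Shift-invariance of $\bar\mu^{\underline x}$ is the standard Krylov--Bogolyubov computation: for any cylinder function $f$,
\[
\Bigl|\int f\circ T\,d\bar\mu^{\underline x,-k}-\int f\,d\bar\mu^{\underline x,-k}\Bigr|=\tfrac1k\Bigl|\int f\,d\mu^{\underline x,-k}-\int f\,d\mu^{\underline x,0}\Bigr|\le \tfrac{2\|f\|_\infty}{k}\xrightarrow[k\to\infty]{}0,
\]
and since cylinder functions determine weak limits, the limit $\bar\mu^{\underline x}$ along the chosen subsequence satisfies $\bar\mu^{\underline x}=\bar\mu^{\underline x}\circ T^{-1}$.

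The heart of the argument is that each pre-limit measure already satisfies compatibility at coordinate $0$. From \eqref{eq:ionescu} and the definition of $g_n$ one reads off that, under $\mu^{\underline x}$, for every $j\ge0$ one has $\EE_{\mu^{\underline x}}(\1_{[b]_j}\mid\F^{\le j-1})(y)=g(y_{-\infty}^{j-1}b)$ for $\mu^{\underline x}$-a.e.\ $y$ and every $b\in A$ (when the relevant cylinder has positive mass this is the telescoping $g_{n+1}/g_n=g$, and it is trivial otherwise). Applying $T^{-i}$ with $i\ge0$ and invoking the homogeneity convention for $g$, this gives $\EE_{\mu^{\underline x,-i}}(\1_{[b]_0}\mid\F^-)(y)=g(y_{-\infty}^{-1}b)$ for $\mu^{\underline x,-i}$-a.e.\ $y$. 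Averaging over $i=0,\dots,k-1$ yields, for every $m\ge1$, $w\in A^m$ and $b\in A$,
\[
\bar\mu^{\underline x,-k}\bigl([w]_{-1}\cap[b]_0\bigr)=\int_{[w]_{-1}}g(y_{-\infty}^{-1}b)\,\bar\mu^{\underline x,-k}(dy).
\]

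Then I would let $k\to\infty$ along the subsequence defining $\bar\mu^{\underline x}$. The left-hand side converges to $\bar\mu^{\underline x}([w]_{-1}\cap[b]_0)$ because cylinders are clopen (Portmanteau). For the right-hand side, set $\psi_{w,b}(y):=\1_{[w]_{-1}}(y)\,g(y_{-\infty}^{-1}b)$, which is bounded; since the projection $y\mapsto y_{-\infty}^{-1}$ is continuous and $[w]_{-1}$ is clopen, $\psi_{w,b}$ is continuous at every $y$ with $y_{-\infty}^{-1}\notin\mathcal{D}_g$, so its discontinuity set is contained in $\{y:y_{-\infty}^{-1}\in\mathcal{D}_g\}$, which is $\bar\mu^{\underline x}$-null by assumption. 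By the version of the Portmanteau theorem valid for bounded functions that are $\bar\mu^{\underline x}$-a.e.\ continuous, $\int\psi_{w,b}\,d\bar\mu^{\underline x,-k}\to\int\psi_{w,b}\,d\bar\mu^{\underline x}$, whence
\[
\bar\mu^{\underline x}\bigl([w]_{-1}\cap[b]_0\bigr)=\int_{[w]_{-1}}g(y_{-\infty}^{-1}b)\,\bar\mu^{\underline x}(dy)
\]
for all $m\ge1$, $w\in A^m$, $b\in A$. For fixed $b$ both sides are finite measures of the cylinder $[w]_{-1}$ agreeing on the algebra of finite unions of such cylinders, which generates $\F^-$, so a monotone-class argument extends the identity to $\bar\mu^{\underline x}([b]_0\cap C)=\int_C g(y_{-\infty}^{-1}b)\,\bar\mu^{\underline x}(dy)$ for all $C\in\F^-$; together with shift-invariance this is exactly \eqref{eq:compatible}, so $\bar\mu^{\underline x}\in\M_T(g)$.

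The only genuine obstacle is the limit on the right-hand side in the third step: $g(\cdot\,b)$ is not continuous in general, so plain weak convergence does not suffice and one truly needs the discontinuities to be negligible for $\bar\mu^{\underline x}$ --- precisely the hypothesis. Everything else (stationarity, the exact compatibility of the pre-limit measures, the monotone-class extension) is routine.
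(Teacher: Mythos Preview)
Your argument is correct, and it reaches the conclusion by a route genuinely different from the paper's. The paper works pointwise: it invokes the reverse martingale theorem to identify $\EE_{\bar\mu^{\underline x}}(\1_{[a]_0}\mid\F^-)$ as the limit of the ratios $\bar\mu^{\underline x}(y_{-l}^{-1}a)/\bar\mu^{\underline x}(y_{-l}^{-1})$, and then, for each $\underline y$ in a full-measure set (continuity points intersected with a set $\mathcal R$ where all cylinder measures are positive), sandwiches these ratios between $g(\underline y a)\pm\mathrm{var}_l^g(\underline y)$ by expanding $\mu^{\underline x,-j}(y_{-l}^{-1}a)$ and bounding the inner $g$-values. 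You instead keep the compatibility identity \emph{integrated} and exact at every pre-limit stage, and pass to the limit once via the extended Portmanteau theorem for bounded $\bar\mu^{\underline x}$-a.e.\ continuous integrands; the monotone-class step then replaces the reverse martingale theorem. Your route is more streamlined (no need to isolate the set $\mathcal R$ or to track the variation explicitly), at the price of invoking Portmanteau-for-a.e.-continuous-functions as a black box; the paper's argument is more hands-on and makes the role of the local oscillation $\mathrm{var}_l^g$ visible, which is pedagogically useful since that quantity reappears throughout the paper. Both proofs pivot on exactly the same place: the limit in $k$ only goes through because $g(\cdot\,b)$ is continuous off a $\bar\mu^{\underline x}$-null set. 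One cosmetic remark: your final clause ``together with shift-invariance'' is not needed---the integral identity on all $C\in\F^-$ already is \eqref{eq:compatible}.
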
}
Formally, when we write $\bar\mu^{\underline{x}}(\mathcal{D}_g)$ we mean $\bar\mu^{\underline{x}}(\{y\in\mathcal X:\underline y\in\mathcal{D}_g\})$. 

All the results that we know about existence  imply that no compatible measure would charge the discontinuous pasts (although we know that this needs not to be the case in general, see Subsection \ref{sec:every} for a counterexample). Here, it is the converse: if the candidate measure does not charge the discontinuous pasts, then it is stationary compatible. 

It is worth mentioning that this result is tight in the following sense: there exists examples of $g$-functions for which no stationary measures exist if we relax the assumption (this is the case of the ``infinite comb'' considered in \cite{CCPP}). This example is presented in Subsection \ref{ex:1}.

\vspace{0.2cm}

Naturally, the main drawback of Theorem \ref{theo:exist} is that its assumption is not easy to check because  the measure $\bar\mu^{\underline{x}}$   is defined through a  limiting procedure. The next subsection focuses on obtaining explicit conditions that are checkable directly on $g$.  


\subsection{Explicit results: how to control the ``smallness'' of $\mathcal{D}_g$}\label{sec:expli}

Theorem \ref{theo:exist} involves the measure $\bar\mu^{\underline{x}}$ which is difficult to access in general. So we seek for topological assumption on the set  $\mathcal{D}_g$, that is, ways to characterise the ``smallness'' of this set without any reference to its measure. 

A first natural way to say that a subset of $\mathcal{X}^{-}$ is small is to say that it is finite, or countable. 

\begin{Cor}\label{cor1}
Suppose that $g\geqslant\epsilon>0$ and that $\mathcal{D}_g$ is countable, then $\M_T(g)\neq\emptyset$.
\end{Cor}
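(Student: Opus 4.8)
The plan is to deduce this from Theorem \ref{theo:exist} by exhibiting, for a suitable past $\underline x$, a limit measure $\bar\mu^{\underline x}$ that does not charge the countable set $\mathcal{D}_g$. Since $g\geqslant\epsilon>0$, the construction in \eqref{eq:ionescu} and \eqref{eq:def_barmu} always produces, by compactness, at least one limit $\bar\mu^{\underline x}\in\M_T$ (stationarity of any such limit is a standard Cesàro/Krylov--Bogolyubov argument and does not require the hypothesis of Theorem \ref{theo:exist}). So the only thing to check is that we can arrange $\bar\mu^{\underline x}(\mathcal{D}_g)=0$. Because $\mathcal{D}_g$ is countable, it suffices to show that for each fixed past $\underline z\in\mathcal{X}^-$ one has $\bar\mu^{\underline x}(\{y:\underline y=\underline z\})=0$, and then sum over the countably many $\underline z\in\mathcal{D}_g$.

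The key estimate is that strong positivity forces every cylinder of length $n$ to have $\bar\mu^{\underline x}$-measure bounded by a quantity that is \emph{uniformly} small in a way incompatible with a single past carrying positive mass. Concretely, fix $\underline z$ and a symbol $a\in A$ with, say, $g(\underline z a)\leqslant 1-\epsilon$ (such an $a$ exists since the $g(\underline z\cdot)$ sum to $1$ over $|A|\geqslant 2$ letters; if $|A|=1$ the statement is trivial). First I would show that for the one-sided measures $\mu^{\underline x,-i}$, and hence for the Cesàro averages $\bar\mu^{\underline x,-k}$, and hence in the weak limit $\bar\mu^{\underline x}$ (cylinders are clopen, so weak convergence gives convergence of their measures), one controls the probability of seeing the block $\underline z$ read off at coordinates $(-n,\dots,-1)$ for large $n$: by \eqref{eq:ionescu} the probability of any specific length-$n$ block is a product $g_n(\cdots)$ of $n$ factors each $\leqslant 1-\epsilon$ whenever the block is ``long enough to force a sub-$(1-\epsilon)$ factor'' — and the point is that along the tail of $\underline z$, repeatedly reading the prescribed letters after the prescribed context yields factors $g(\cdot)\leqslant 1-\epsilon$ infinitely often. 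More carefully: for $\{y:\underline y=\underline z\}$ to have positive $\bar\mu^{\underline x}$-mass, every cylinder $[\,z_{-n}^{-1}\,]_{-1}$ must have mass $\geqslant c>0$ for all $n$; but iterating the kernel one gets $\bar\mu^{\underline x}([z_{-n}^{-1}]_{-1})\leqslant (1-\epsilon)\,\bar\mu^{\underline x}([z_{-n+1}^{-1}]_{-1}) + \text{(correction)}$ each time the symbol $z_{-m}$ is a ``low-probability continuation'' of its own past $z_{-\infty}^{-m-1}$, and since $\sum_{x_0}g(\underline z' x_0)=1$ over at least two letters there is always such a drop of size at least, e.g., $\epsilon/|A|$ at every step. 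Hence $\bar\mu^{\underline x}([z_{-n}^{-1}]_{-1})\to 0$, so $\bar\mu^{\underline x}(\{y:\underline y=\underline z\})=0$.

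Summing over the countably many $\underline z\in\mathcal{D}_g$ gives $\bar\mu^{\underline x}(\mathcal{D}_g)=0$, and Theorem \ref{theo:exist} applies with this particular limit measure, yielding $\bar\mu^{\underline x}\in\M_T(g)$ and in particular $\M_T(g)\neq\emptyset$. Note the choice of $\underline x$ is irrelevant here — any $\underline x$ works — since the bound is on the limit measure's mass on a fixed past, uniformly.

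The step I expect to be the main obstacle is making precise the ``drop at every step'' claim, i.e.\ showing that the mass of the nested cylinders $[z_{-n}^{-1}]_{-1}$ decays to $0$ under the limit measure. One has to be a little careful because the stationary limit $\bar\mu^{\underline x}$ need not \emph{a priori} be known to be compatible with $g$ (that is exactly what we are trying to prove), so the clean identity $\bar\mu^{\underline x}(z_{-n}^{-1})=\bar\mu^{\underline x}(z_{-n}^{-2})\,g(\underline z')$ is not available yet. The fix is to prove the decay at the level of the pre-limit objects $\bar\mu^{\underline x,-k}$ — where $\mu^{\underline x,-i}$ is an explicit push-forward of the Ionescu--Tulcea measure and the block $z_{-n}^{-1}$ sitting at coordinates $(-n,\dots,-1)$ has probability exactly a product of $g$-factors, each $\leqslant 1$ and a positive fraction of them $\leqslant 1-\epsilon$ by the pigeonhole/normalization argument above — obtaining a bound $\bar\mu^{\underline x,-k}([z_{-n}^{-1}]_{-1})\leqslant \rho_n$ with $\rho_n\to 0$ independent of $k$, and then passing to the weak limit since cylinders are clopen. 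The only genuinely delicate point is handling the at most $O(n)$ ``boundary'' coordinates near $-i$ and near the time origin for each $i<k$; averaging over $i$ absorbs these uniformly, which is why the Cesàro average (rather than $\mu^{\underline x,-k}$ itself) is the right object to estimate.
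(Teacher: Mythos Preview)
Your strategy is exactly the paper's: show that any limit $\bar\mu^{\underline x}$ is non-atomic on $\mathcal{X}^-$, whence a countable $\mathcal{D}_g$ has measure zero, and then invoke Theorem~\ref{theo:exist}. However, you are working much harder than necessary on the decay of $\bar\mu^{\underline x}([z_{-n}^{-1}])$. The observation you are missing is that $g\geqslant\epsilon$ together with the normalisation \eqref{g-function} forces $g(\underline w a)=1-\sum_{b\ne a}g(\underline w b)\leqslant 1-(|A|-1)\epsilon\leqslant 1-\epsilon$ for \emph{every} past $\underline w$ and \emph{every} symbol $a$, not just for ``a positive fraction'' of the steps. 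Hence every factor in $g_n$ is at most $1-\epsilon$, and for each $j\geqslant n$ one has directly
\[
\mu^{\underline x,-j}([z_{-n}^{-1}]_{-1})=\sum_{w_{-j}^{-n-1}}\mu^{\underline x,-j}(w_{-j}^{-n-1})\,g_n(\underline x\,w_{-j}^{-n-1}z_{-n}^{-1})\leqslant (1-\epsilon)^n,
\]
uniformly in $j$ and in $\underline x$. The Ces\`aro average inherits the same bound (the finitely many terms with $j<n$ contribute $O(n/m_k)\to 0$), and passing to the weak limit on the clopen cylinder gives $\bar\mu^{\underline x}([z_{-n}^{-1}]_{-1})\leqslant(1-\epsilon)^n\to 0$. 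This is precisely the paper's Lemma~\ref{lemma:proof_claim}, after which the corollary is one line. Your worries about pigeonhole arguments, ``low-probability continuations'', and $O(n)$ boundary corrections all evaporate once you use the uniform bound $g\leqslant 1-\epsilon$.
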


We will explain later  that enumerability of $\mathcal{D}_g$  is far from necessary for existence. 

Let us now mention another way to characterise the smallness of $\mathcal{D}_g$ which was introduced by  \cite{gallo/paccaut/2013}. First denote by $\mathcal{D}^n_g$ the set of strings of size $n$ which are prefix of discontinuous pasts of $g$, that is, 
\[
\mathcal{D}^n_g:=\{x_{-n}^{-1}:\underline x\in\mathcal{D}_g\}.
\]
Now, we define the topological pressure of  $\mathcal{D}_g$ as 
\begin{equation}\label{eq:top_entropy}
P_g(\mathcal{D}_g)=\limsup_{n\to+\infty}\frac1n\log\sum_{a_{-n}^{-1}\in\mathcal{D}_g^n}\sup_{\underline{x}}g_n(\underline x a_{-n}^{-1}).
\end{equation}
\cite[Theorem 1]{gallo/paccaut/2013} states that there exists at least one stationary measure compatible with $g$  if $P_g(\mathcal{D}_g)<0$ together with some  positivity condition on $g$ (that we do not define here for the sake of presentation). The next proposition states that $P_g(\mathcal{D}_g)<0$, alone, implies the condition of our Theorem \ref{theo:exist}, showing in particular that our results are strictly stronger than those of \cite{gallo/paccaut/2013}. 

{\begin{Cor}\label{prop1}
Suppose that $P_g(\mathcal{D}_g)<0$, then there exists a past $\underline{x}$ and a limit $\bar\mu^{\underline{x}}$ such that $\bar\mu^{\underline{x}}(\mathcal{D}_g)=0$. 
\end{Cor}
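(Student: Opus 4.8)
The plan is to establish the (formally stronger) statement that the conclusion holds for \emph{every} past $\underline x$ and \emph{every} subsequential limit $\bar\mu^{\underline x}$ of the Ces\`aro averages in \eqref{eq:def_barmu}. So fix $\underline x\in\mathcal X^-$ and a sequence $k_j\uparrow\infty$ along which $\bar\mu^{\underline x,-k_j}$ converges weakly to $\bar\mu^{\underline x}$. The first move is to approximate the (non-closed) set $\{y\in\mathcal X:\underline y\in\mathcal D_g\}$ from outside by clopen sets: for $n\geqslant1$ put $\mathcal C_n:=\bigcup_{a_{-n}^{-1}\in\mathcal D_g^n}[a_{-n}^{-1}]_{-1}$. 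Since $A$ is finite, $\mathcal D_g^n\subseteq A^n$ is finite, so $\mathcal C_n$ is a finite union of cylinders, hence clopen and with empty topological boundary; moreover $\underline y\in\mathcal D_g$ forces $y_{-n}^{-1}\in\mathcal D_g^n$, so $\{y:\underline y\in\mathcal D_g\}\subseteq\mathcal C_n$ for every $n$. Writing $S_n:=\sum_{a_{-n}^{-1}\in\mathcal D_g^n}\sup_{\underline z}g_n(\underline z a_{-n}^{-1})$ for the $n$-th term inside the $\limsup$ of \eqref{eq:top_entropy}, the goal reduces to proving $\bar\mu^{\underline x}(\mathcal C_n)\leqslant S_n$ for all $n$: indeed $P_g(\mathcal D_g)<0$ then yields $S_n\to0$, whence $\bar\mu^{\underline x}(\mathcal D_g)\leqslant\inf_n S_n=0$.

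The core of the argument is a uniform-in-time estimate: for every $n\geqslant1$ and every $i\geqslant n$ one has $\mu^{\underline x,-i}(\mathcal C_n)\leqslant S_n$. To see this, note first that $T^{-i}\mathcal C_n=\bigsqcup_{a_{-n}^{-1}\in\mathcal D_g^n}[a_{-n}^{-1}]_{i-1}$ is a disjoint union, so $\mu^{\underline x,-i}(\mathcal C_n)=\mu^{\underline x}(T^{-i}\mathcal C_n)=\sum_{a_{-n}^{-1}\in\mathcal D_g^n}\mu^{\underline x}([a_{-n}^{-1}]_{i-1})$. For $i\geqslant n$ the block $a_{-n}^{-1}$ is read at coordinates $i-n,\dots,i-1\geqslant0$, so decomposing over the preceding coordinates $y_0^{i-n-1}$ and using \eqref{eq:ionescu} together with the product formula defining $g_n$,
\[
\mu^{\underline x}\big([a_{-n}^{-1}]_{i-1}\big)=\sum_{c_0^{i-n-1}\in A^{i-n}}g_{i-n}(\underline x c_0^{i-n-1})\,g_n(\underline x c_0^{i-n-1}a_{-n}^{-1})\leqslant\Big(\sup_{\underline z}g_n(\underline z a_{-n}^{-1})\Big)\sum_{c_0^{i-n-1}\in A^{i-n}}g_{i-n}(\underline x c_0^{i-n-1}),
\]
and the last sum equals $1$ because $g$ is a probability kernel — it is the total $\mu^{\underline x}$-mass of all length-$(i-n)$ cylinders. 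Summing over $a_{-n}^{-1}\in\mathcal D_g^n$ gives $\mu^{\underline x,-i}(\mathcal C_n)\leqslant S_n$. For the finitely many indices $0\leqslant i<n$ we only use the trivial bound $\mu^{\underline x,-i}(\mathcal C_n)\leqslant1$.

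It remains to pass to the limit. Because $\mathcal C_n$ is a finite union of cylinders, its boundary is empty, so weak convergence gives $\bar\mu^{\underline x}(\mathcal C_n)=\lim_j\bar\mu^{\underline x,-k_j}(\mathcal C_n)$. Splitting each Ces\`aro average at index $n$ and using the two bounds just obtained,
\[
\bar\mu^{\underline x,-k_j}(\mathcal C_n)=\frac1{k_j}\sum_{i=0}^{k_j-1}\mu^{\underline x,-i}(\mathcal C_n)\leqslant\frac{n}{k_j}+S_n,
\]
and letting $j\to\infty$ (with $n$ fixed) we get $\bar\mu^{\underline x}(\mathcal C_n)\leqslant S_n$, hence $\bar\mu^{\underline x}(\mathcal D_g)\leqslant S_n$ for every $n$. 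Since $P_g(\mathcal D_g)<0$ forces $S_n\to0$, we conclude $\bar\mu^{\underline x}(\mathcal D_g)=0$.

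\emph{Main obstacle.} The only genuinely delicate point is the uniform-in-$i$ estimate of the second paragraph: the gain comes from observing that, after summing over all coordinates strictly preceding the block where $a_{-n}^{-1}$ is read, the $\mu^{\underline x}$-mass of $[a_{-n}^{-1}]_{i-1}$ is bounded by $\sup_{\underline z}g_n(\underline z a_{-n}^{-1})$ \emph{uniformly in} $i$, so that the Ces\`aro average never exceeds $S_n$ (up to the negligible boundary term $n/k_j$). The rest — outer approximation by clopen sets, and convergence of measures on cylinder sets — is routine.
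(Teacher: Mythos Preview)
Your proof is correct and follows essentially the same route as the paper's: both bound $\sum_{a_{-n}^{-1}\in\mathcal D_g^n}\mu^{\underline x,-j}(a_{-n}^{-1})$ uniformly in $j\geqslant n$ by decomposing over the preceding coordinates and pulling out $\sup_{\underline z}g_n(\underline z a_{-n}^{-1})$, then pass to the Ces\`aro limit and let $n\to\infty$. The only cosmetic difference is that you keep the bound in terms of $S_n$ itself and observe that $P_g(\mathcal D_g)<0$ forces $S_n\to0$, whereas the paper first replaces $S_n$ by the exponential majorant $\big(e^{P_g(\mathcal D_g)+\delta}\big)^n$; your phrasing via the clopen sets $\mathcal C_n$ and the explicit boundary term $n/k_j$ is arguably a bit tidier, and your remark that the conclusion in fact holds for \emph{every} $\underline x$ and every subsequential limit is accurate (the paper's argument shows this too, even though the statement only asserts existence).
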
}

The assumption $P_g(\mathcal{D}_g)<0$  does not involve any measure, but it strongly relies on detailed properties of $g$. 
For this reason, \cite{gallo/paccaut/2013} also considered the  \emph{upper exponential growth} of   $\mathcal{D}_g$. Since we are assuming that  $\mathcal{D}_g$ is infinite, we know that the cardinal $|\mathcal{D}^n_g|$ diverges as $n$ diverges. How fast it diverges can be measured by the \emph{upper exponential growth}
\begin{equation}\label{eq:growth}
\bar{gr}(\mathcal{D}_g):=\limsup_{n}|\mathcal{D}^n_g|^{1/n}.
\end{equation}
This  is yet  another way to characterise how large is $\mathcal D_g$ in $\mathcal{X}^-$. It was proved in  \cite{gallo/paccaut/2013} that, for $g\geqslant\epsilon$, 
\begin{equation}\label{eq:growth_assumption}
\bar{gr}({\mathcal{D}_g})<[1-(|A|-1)\epsilon]^{-1}
\end{equation} implies that $\mathcal{D}_g$ has strictly negative topological pressure, and therefore was  sufficient  for existence. According to Corollary \ref{prop1} this is also a sufficient condition for our criterium to hold. 

\subsection{A remark on the cardinality of $\mathcal{D}_g$}\label{sec:preceding} It is interesting to notice that the  growth (that is how fast $|\mathcal{D}^n_g|$ diverges) is not trivially related to the cardinality of the set. It  is easy for instance, for any diverging natural function $f(n),n\in\mathbb{N}^\star$ (no matter how slow it diverges), to build an uncountable set $\mathcal{D}_g$ with $|\mathcal{D}^n_g|=f(n)$ (see Lemma \ref{lemma:tree1}). So assumption \eqref{eq:growth_assumption} already covers examples of $g$'s having uncountably many discontinuities. 

On the other hand,  it is also possible to construct countable sets with  rapidly diverging $|\mathcal{D}^n_g|$, giving examples of results covered by Corollary \ref{cor1} which are not covered by assumption \eqref{eq:growth_assumption} (which was the only explicit assumption of \cite{gallo/paccaut/2013}). For instance, in the case of binary alphabets, assumption \eqref{eq:growth_assumption} imposes  $\bar{gr}({\mathcal{D}_g})<2$. But for any $f(n)=o(2^n)$ we can construct a countable set $\mathcal{D}_g$ with $|\mathcal{D}^n_g|=f(n)$ (see Lemma \ref{lemma:tree2}). So taking for instance $f(n)=\frac{2^n}{\lfloor \log n\rfloor}$ we have $\bar{gr}({\mathcal{D}_g})=\limsup_n\frac{2}{\lfloor \log n\rfloor^{1/n}}=2$ which therefore does not satisfy assumption \eqref{eq:growth_assumption} but satisfies the assumption of Corollary \ref{cor1}

\subsection{Explicit results: using the form of $\mathcal{D}_g$} To conclude concerning the explicit results on  existence, let us also mention the following assumption, which is not on the size of the set $\mathcal{D}_g$, but rather on its form.  Suppose that there exists a finite string $v$ that does not appear as substring in the elements of $\mathcal{D}_g$, that is, for any $\underline x\in\mathcal{D}_g$, we have $x_i^{i+|v|-1}\ne v$ for any $i\leqslant -|v|$. We then say that $\mathcal{D}_g$ is \emph{$v$-free}. 

\begin{Cor}\label{cor2}
Consider a $g$-function $g$ for which $\mathcal{D}_g$ is $v$-free for some finite string $v$, and assume that $\inf_{\underline x}g_{|v|}(\underline x v)>0$, then $\M_T(g)\neq\emptyset$. In particular, the positivity condition is satisfied under strong positivity $\inf g>0$.
\end{Cor}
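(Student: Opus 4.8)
The plan is to deduce the statement from Theorem \ref{theo:exist}: it is enough to produce a past $\underline{x}$ and a subsequential limit $\bar\mu^{\underline{x}}$ of \eqref{eq:def_barmu} satisfying $\bar\mu^{\underline{x}}(\mathcal{D}_g)=0$, and I expect \emph{every} $\underline{x}$ and \emph{every} such limit to work. Set $n=|v|$ and $\delta:=\inf_{\underline z}g_{n}(\underline z v)$, which is strictly positive by hypothesis. Since $\mathcal{D}_g$ is $v$-free, no element of $\mathcal{D}_g$ contains $v$ as a factor, so for every $M\geqslant n$ one has the inclusion of Borel sets
\[
\{y\in\mathcal X:\underline y\in\mathcal{D}_g\}\ \subseteq\ C_M:=\{y\in\mathcal X:\ v\text{ does not occur in }y_{-M}^{-1}\},
\]
and $C_M$, being determined by the finitely many coordinates $y_{-M},\dots,y_{-1}$, is a finite union of cylinders, hence clopen. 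As $\bar\mu^{\underline{x}}$ is shift-invariant (being a limit of Ces\`aro averages), $\bar\mu^{\underline{x}}(C_M)=\bar\mu^{\underline{x}}(T^{-M}C_M)=\bar\mu^{\underline{x}}\big(\{y:v\text{ does not occur in }y_{0}^{M-1}\}\big)$, so it suffices to bound this last quantity and then let $M\to\infty$.

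The heart of the argument is a uniform geometric estimate coming from the positivity assumption. Fix $i\geqslant0$ and $M\geqslant n$ and split the index set $\{i,i+1,\dots,i+M-1\}$ into the $\lfloor M/n\rfloor$ disjoint length-$n$ blocks $[\,i+(j-1)n,\ i+jn-1\,]$, $1\leqslant j\leqslant\lfloor M/n\rfloor$. By the Ionescu-Tulcea construction of $\mu^{\underline{x}}$ in \eqref{eq:ionescu} together with the homogeneity built into the notation for $g$, the conditional probability that the $j$-th block equals $v$, given $\F^{\leqslant i+(j-1)n-1}$, equals $g_n\big(y_{-\infty}^{\,i+(j-1)n-1}v\big)$, which is at least $\delta$ for $\mu^{\underline{x}}$-almost every $y$. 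Running the chain rule over $j=\lfloor M/n\rfloor,\dots,1$ gives
\[
\mu^{\underline{x}}\big(v\text{ does not occur in }y_{i}^{i+M-1}\big)\ \leqslant\ (1-\delta)^{\lfloor M/n\rfloor}\qquad\text{for every }i\geqslant0 .
\]
Averaging this over $i=0,\dots,k-1$ and using $T^{-i}\{y:v\text{ does not occur in }y_0^{M-1}\}=\{y:v\text{ does not occur in }y_i^{i+M-1}\}$ yields $\bar\mu^{\underline{x},-k}\big(\{y:v\text{ does not occur in }y_0^{M-1}\}\big)\leqslant(1-\delta)^{\lfloor M/n\rfloor}$ for all $k$. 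Since this event is clopen, hence a continuity set for weak convergence, the bound passes to the limit defining $\bar\mu^{\underline{x}}$.

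Combining the two previous paragraphs, $\bar\mu^{\underline{x}}(\mathcal{D}_g)\leqslant\bar\mu^{\underline{x}}(C_M)\leqslant(1-\delta)^{\lfloor M/n\rfloor}$ for every $M\geqslant n$, and letting $M\to\infty$ gives $\bar\mu^{\underline{x}}(\mathcal{D}_g)=0$; Theorem \ref{theo:exist} then yields $\bar\mu^{\underline{x}}\in\M_T(g)\neq\emptyset$. The final assertion is immediate: if $\inf g>0$, then $g_n(\underline z v)=\prod_{i=1}^{n}g(\underline z v_1\cdots v_i)\geqslant(\inf g)^{n}>0$ uniformly in $\underline z$, so the hypothesis $\inf_{\underline z}g_{|v|}(\underline z v)>0$ is automatically satisfied.

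I do not expect a real obstacle here. The steps requiring (routine) care are the identity expressing block probabilities of the Ionescu-Tulcea measure $\mu^{\underline{x}}$ as products of values of $g$, the harmless interchange of the limit with the Ces\`aro averages (via continuity sets), and the fact that $\bar\mu^{\underline{x}}$ is shift-invariant. The one genuine idea is to exploit the stationarity of $\bar\mu^{\underline{x}}$ to replace the infinitely many ``$v$ absent from the past'' constraints defining the relevant event by a single finite-window event, which then carries the uniform exponential bound coming from $\inf_{\underline z}g_{|v|}(\underline z v)>0$.
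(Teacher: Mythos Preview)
Your argument is correct and follows the same line as the paper's proof: both show $\bar\mu^{\underline{x}}(\mathcal D_g)=0$ by establishing that the probability of not seeing $v$ in a window of length $M$ decays like $(1-\delta)^{\lfloor M/|v|\rfloor}$, uniformly in the starting past, and then apply Theorem~\ref{theo:exist}. The only presentational differences are that the paper writes the computation for $|v|=1$ (leaving the block argument for longer $v$ as a routine extension) and routes the passage to the limit through the scheme of the proof of Corollary~\ref{prop1}, whereas you treat general $|v|$ directly via the block decomposition and invoke shift-invariance of $\bar\mu^{\underline{x}}$ together with weak convergence on clopen sets; the underlying estimate and the mechanism for transferring it to $\bar\mu^{\underline{x}}$ are the same.
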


An example of application of Corollary \ref{cor2} is $\mathcal{D}_g=\{1,3\}^{-\mathbb N}$ on the $3$-letters alphabet $A=\{1,2,3\}$. Notably, existence was not shown at this level of generality for this example in the literature.  \cite[see Corollary 1 and Example 4 therein]{gallo/paccaut/2013} need the parameter $\epsilon$ to be sufficiently large to compensate the exponential growth of the set of discontinuities. This is also the case of \cite[]{gallo/garcia/2018} which relies on the results of \cite{gallo/paccaut/2013}. \cite{gallo/2009} and  \cite{gallo/garcia/2013} on the other hand obtained existence adding a condition on the continuity set $\mathcal{X}^-\setminus\mathcal{D}_g$  which is rather strong and in particular implies also uniqueness. Using our Corollary \ref{cor2}, we guarantee existence without any further condition than  $v$-freeness. 

\subsection{Uniqueness} As we already mentioned in introduction, to get uniqueness usually requires some control on the \emph{variation} of $g$.  The variation of order $l$ of  $g$ at $\underline y$ is defined by
\[
\text{var}^g_l(\underline{y}):=\sup_{a\in A}\sup_{z:z_{-l}^{-1}=y^{-1}_{-l}}|g(\underline za)-g(\underline{y}a)|.
\]
Clearly, $\underline y$ is a continuity point for $g$ if and only if $\text{var}^g_l(\underline{y})$ vanishes as $l$ diverges. We do not extend much here because our objective will be to focus on the variable length case in the next section, but let us just mention  that, under the condition of Theorem \ref{theo:exist}, if we further assume that $\mathbb E_{\bar \mu^{\underline x}}\sum_l\left(\text{var}^g_l\right)^2<\infty$, then uniqueness is granted. This fact follows immediately from a result in \cite{johansson/oberg/2003}.

\section{Specialising to variable length memory chains}\label{sec:vlmc}

In the present section we consider a particular class of $g$-measures which we call \emph{variable length memory chains} that we now define. 
Given a $g$-function $g$, for any $\underline x$, let us define 
\begin{align*}
\ell^g(\underline x)&:=\inf\{k\geqslant1:g(\underline yx_{-k}^{-1}a)=g(\underline zx_{-k}^{-1}a)\,,\,\,\forall a,\underline y,\underline z\}
\end{align*}
(with the convention that $\ell^g(\underline x)=\infty$ if the set is empty), which is equivalent to $\ell^g(\underline x):=\inf\{k\geqslant1:\text{var}_k^g(\underline x)=0\}$. Call $x_{-\ell^g(\underline x)}^{-1}$ the context of $\underline x$, it is the smallest suffix of $\underline x$ we need to get the distribution of the next symbol according to $g$. The set $\tau^g:=\cup_{\underline x}\{x_{-\ell^g(\underline x)}^{-1}\}$ is sometimes called context tree, because it can be pictorially represented as a rooted tree in which each path from the root to a leaf represents a context.  Some references impose $\tau^g$ to be countable or finite, this is usual in the statistics literature but not only \citep{CCPP,cenacetal/2018} (naturally the set of finite contexts is always countable, but $\tau^g$ itself needs not be countable). We do not assume this here.

Since by definition $g(\underline xa)$ does not depend on $x_{-\infty}^{-\ell^g(\underline x)-1}$, we  define the probability kernel $p^g:A\times \mathcal{X}^{-}\cup \bigcup_{i\geqslant0}A^{\{-i,\ldots,-1\}}\rightarrow[0,1]$ through
\[
p^g(a|x_{-\ell^g(\underline x)}^{-1}):=g(\underline xa)\,\,,\,\,\,\forall a\in A, \underline x\in\mathcal{X}^-. 
\]
We can now identify $g$ with the pair $(\tau^g,p^g)$, which we call  \emph{a probabilistic context tree}. A stationary measure $\mu$ compatible, in the sense of \eqref{eq:compatible}, with a probabilistic context tree is call a \emph{variable length memory chain} (VLMC, not to be confused with Variable Length Markov Chains, which are assumed to have a \emph{finite} context tree). Observe that, for such  measures, \eqref{eq:compatible} now reads, for any $x$ such that $\ell(\underline x)<\infty$
\begin{equation}\label{eq:compatibleVLMC}
{\EE}_{\mu}(\1_{[a]_0}|\F^{-})({x})=\mu([a]_0|x_{-\ell^g(\underline x)}^{-1})=p^g(a|x_{-\ell^g(\underline x)}^{-1})
\end{equation}
in which 
\[
\mu([a]_0|x_{-\ell^g(\underline x)}^{-1}):=\frac{\mu(x_{-\ell^g(\underline x)}^{-1}a)}{\mu(x_{-\ell^g(\underline x)}^{-1})}.
\]

\begin{Not}\begin{itemize}\item Sometimes, in order to avoid overloaded notation, we will avoid the reference to $g$ and simply write $\ell,\tau,p$, keeping in mind that they all refer to a specific $g$-function $g$. 
\item Just as $g$ has been extended to act on $\mathcal{X}^{\leqslant i}$ for $i\in \mathbb Z$, we also extend $\ell$ to take as argument any sequence $x_{-\infty}^{i}$, $i\in\mathbb Z$, by $\ell(x_{-\infty}^{i})=\ell((T^{i+1}x)_{-\infty}^{-1})$. 
\item The set $\{\underline x:\ell(\underline x)>k\}$ is measurable with respect to $\mathcal{F}^{[-k,-1]}$.  Thus, using the above notation extension, we will make sense out of the set $\{x\in\mathcal{X}:\ell(x_{i+1}^{i+j})>j\}$ (or $=$ or  $<$). 
\end{itemize}
\end{Not}

\begin{Theo}\label{exist_uniq_VLMC}
If there exists $\underline x$ such that $\bar\mu^{\underline{x}}(\ell^g<\infty)=1$, then  $\bar\mu^{\underline{x}}\in\mathcal{M}_T(g)$.  
Under the stronger assumption that $\EE_{\bar\mu^{\underline{x}}}\ell^g<\infty$, and adding that $\inf g>0$, the set $\mathcal{M}_T$ has a unique element. 
\end{Theo}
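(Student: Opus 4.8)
The plan is to reduce each of the two assertions to a result already available: the existence part to Theorem \ref{theo:exist}, and the uniqueness part to the square-summable-variation criterion of \cite{johansson/oberg/2003} that is recalled in the remark closing Section \ref{sec:main}.

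For the first assertion, the key elementary observation is the inclusion $\mathcal{D}_g\subseteq\{\underline x:\ell^g(\underline x)=\infty\}$: any past with a finite context is a continuity point of $g$. Indeed, if $\ell^g(\underline x)=k<\infty$ then, by definition of $\ell^g$, $\text{var}_k^g(\underline x)=0$; equivalently, for every $a\in A$ the function $g(\cdot\,a)$ is constant on $\{\underline w\in\mathcal X^-:w_{-k}^{-1}=x_{-k}^{-1}\}$, which is an open neighbourhood of $\underline x$ in $\mathcal X^-$, so $g$ is locally constant around $\underline x$, hence continuous there, and $\underline x\notin\mathcal{D}_g$. Recalling that $\bar\mu^{\underline x}(\mathcal D_g)$ and $\bar\mu^{\underline x}(\ell^g<\infty)$ stand for the $\bar\mu^{\underline x}$-measures of $\{y\in\mathcal X:\underline y\in\mathcal D_g\}$ and $\{y\in\mathcal X:\ell^g(\underline y)<\infty\}$ respectively, the hypothesis $\bar\mu^{\underline x}(\ell^g<\infty)=1$ forces $\bar\mu^{\underline x}(\{\ell^g=\infty\})=0$ and \emph{a fortiori} $\bar\mu^{\underline x}(\mathcal D_g)=0$. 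Theorem \ref{theo:exist} then applies verbatim and gives $\bar\mu^{\underline x}\in\mathcal M_T(g)$; in particular $\mathcal M_T(g)\neq\emptyset$.

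For the second assertion, I would exploit that for a probabilistic context tree the variations are truncated at the context length: $\text{var}_l^g(\underline y)=0$ whenever $l\geqslant\ell^g(\underline y)$, while $\text{var}_l^g(\underline y)\leqslant1$ for every $l$. Hence the pointwise bound $\sum_{l\geqslant1}\big(\text{var}_l^g(\underline y)\big)^2\leqslant\ell^g(\underline y)$ holds for every $\underline y$ with $\ell^g(\underline y)<\infty$, so it holds $\bar\mu^{\underline x}$-a.e.\ by the first step. Integrating, the stronger hypothesis $\EE_{\bar\mu^{\underline x}}\ell^g<\infty$ gives $\EE_{\bar\mu^{\underline x}}\sum_l\big(\text{var}_l^g\big)^2<\infty$. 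Together with $\bar\mu^{\underline x}(\mathcal D_g)=0$ (established in the first step) and the now-assumed strong positivity $\inf g>0$, the hypotheses of the uniqueness criterion recalled at the end of Section \ref{sec:main} are met, so that criterion yields $|\mathcal M_T(g)|=1$; combined with the first step, $\mathcal M_T(g)=\{\bar\mu^{\underline x}\}$.

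The two computations involved — the inclusion $\mathcal D_g\subseteq\{\ell^g=\infty\}$ and the truncation estimate $\sum_l(\text{var}_l^g)^2\leqslant\ell^g$ — are routine. The delicate point, and the one I expect to require most care, is that the uniqueness statement being invoked must be the \emph{non-uniform} form of the theorem of \cite{johansson/oberg/2003}, in which square summability of the variations is asked only in the mean against the candidate measure $\bar\mu^{\underline x}$ (the uniform version would generally fail here, since a context tree may contain contexts of arbitrarily large length), together with strong positivity; one should therefore pin down precisely which formulation of that result — or of the remark derived from it — is being used.
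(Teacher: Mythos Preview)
Your proposal is correct and follows essentially the same approach as the paper: both parts reduce to Theorem~\ref{theo:exist} via the inclusion $\mathcal D_g\subset\{\ell^g=\infty\}$, and to the in-mean Johansson--\"Oberg criterion via the pointwise bound $\sum_l(\text{var}^g_l)^2\leqslant\ell^g$ (the paper writes this as $\text{var}^g_n\leqslant\1_{(n,\infty)}(\ell)$ and swaps sum and expectation by monotone convergence, which is the same computation). Your caveat about needing the non-uniform formulation of \cite{johansson/oberg/2003} is exactly right and matches what the paper uses.
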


In other words, if the length of the contexts ``seen'' by the candidate measure is a.s. finite, then the candidate is actually a compatible VLMC. This amounts to say that the corresponding stationary process goes from finite size contexts to finite size contexts, that is, lives on a countable set of contexts. If moreover this length has finite expectation, then this compatible measure is unique (modulo strong positivity $g$). This is quite a natural statement, and as far as we know, the only one involving the length of the context function as main data. Previous results of the literature, such as \cite{gallo/2009,gallo/garcia/2013} mainly focussed, as far as context trees are concerned, on \emph{$v$-free context trees} (those trees for which the set of infinite contexts is $v$-free). The idea to work on the form of $\tau$ was pushed forward by \cite{cenacetal/2018}, in which more refined combinatorial properties of $\tau$ are used to obtain sufficient and necessary conditions for existence and uniqueness of the compatible measure. The comparison with their results would be interesting but is not straightforward. We choose not to enter into the details here.
\vspace{0.2cm}

Theorem \ref{exist_uniq_VLMC} is nevertheless quite inexplicit, as Theorem \ref{theo:exist} was, because the assumption is based on $\bar\mu^{\underline{x}}$. So we now focus on obtaining ``easier to check" results. In this direction, the next result will involve $\mu^{\underline{x}}$ which is way more simple to understand than $\bar\mu^{\underline{x}}$. 

{For any given stationary measure $\mu\in\mathcal M_T$, let us define  
\[
\beta(n):=
\sup\frac{1}{2}\sum_{i=1}^{I}\sum_{j=1}^{J} \left|\mu(A_i\cap
B_j)-\mu(A_i)\mu(B_j)\right|
\]
in which the supremum is taken over all the possible partitions $\{A_1,\ldots,A_I\}$ and $\{B_1,\ldots,B_J\}$ of $\XX$ satisfying $A_i\in\mathcal{F}^{\le0}$ and $B_j\in\mathcal{F}^{\geqslant n}$.  Then, we say \citep{bradley/2005} that $\mu$  is \emph{weak Bernoulli} (or, equivalently, \emph{absolute regular} or \emph{$\beta$-mixing}) if $\beta(n)$ vanishes as $n$ diverges.}
The reader should keep in mind that this is much stronger than having a mere ``mixing'', which corresponds to the absolute value above vanishing as $n$ diverges for any pair $A_i,B_j$. Anyway, what matters is not so much the definition itself, but rather its implications. \cite{shields/1996} lists several implications of weak-Bernoullicity. Here, let us cite only two    interesting consequences of a given stationary measure $\mu$ being weak-Bernoulli:
\begin{itemize}
\item $\mu$ can be obtained as a coding factor of some product measure $\nu$, that is, there exists a function $f:A^{\mathbb Z}\rightarrow B^\mathbb Z$ ($B$ some other finite aphabet) that commutes with the shift $T$, and such that $\mu=\nu\circ f^{-1}$.
\item The double tail $\sigma$-algebra $\mathcal T:=\cap_{n\geqslant1}\left(\mathcal{F}^{\leqslant -n}\vee\mathcal F^{\geqslant n}\right)$ is trivial under $\mu$.
\end{itemize} 

Based on the measure $\mu^{\underline x}$ (defined through \eqref{eq:ionescu}), the next theorem gives a sufficient condition for the existence of a unique compatible measure, which moreover is weak-Bernoulli. 

\begin{Theo}\label{theo:unifo}
Suppose that there exists $\underline x\in \mathcal{X}$ such that $\sup_{\underline x}\mu^{\underline{x}}(\{y_0^{+\infty}:\ell^g(y_0^{n-1})>n\})\stackrel{n}{\rightarrow}0$, then  $\bar\mu^{\underline{x}}\in\mathcal{M}_T(g)$.  
If moreover we have $g>0$ and  for any $\underline x$
\begin{equation}\label{eq:assumption-iii-us}
\mu^{\underline{x}}\left(\left\{y_0^{+\infty}:\sum_{n\geqslant1}\1_{(n,\infty)}(\ell^g(y_0^{n-1}))<\infty\right\}\right)=1,
\end{equation}
 then there exists a unique compatible measure and it is weak-Bernoulli.  
 \end{Theo}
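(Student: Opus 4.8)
The plan is to deduce this result from the more abstract Theorems \ref{theo:exist}, \ref{exist_uniq_VLMC} and the uniqueness criterion of \cite{johansson/oberg/2003}, by transferring the hypotheses on $\mu^{\underline x}$ to hypotheses on $\bar\mu^{\underline x}$. First I would treat the existence part. Set $E_n:=\{y\in\mathcal X:\ell^g(y_0^{n-1})>n\}\in\mathcal F^{[0,n-1]}$ (making sense of this via the notation extension for $\ell$). The hypothesis $\sup_{\underline x}\mu^{\underline x}(E_n)\to 0$, combined with stationarity of the construction ($\mu^{\underline x,-i}=\mu^{\underline x}\circ T^{-i}$ and $\mu^{\underline x,-i}(E_n)=\mu^{\underline x}(T^{-i}E_n)$, which by the iterative structure \eqref{eq:ionescu} is again controlled by a $\sup_{\underline z}\mu^{\underline z}(E_n)$ type quantity up to the Dirac part), gives that the Cesàro averages $\bar\mu^{\underline x,-k}$ also assign vanishing mass to the ``long context'' events uniformly in $k$. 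Passing to the weak limit $\bar\mu^{\underline x}$ along the subsequence, and using that $\{y:\ell^g(\underline y)=\infty\}=\bigcap_n\{y:\ell^g(y_{-n}^{-1})>n\}$ is a decreasing intersection of cylinder events (hence its indicator is a bounded function that is a decreasing limit of continuous ones), one gets $\bar\mu^{\underline x}(\ell^g=\infty)=0$, i.e. $\bar\mu^{\underline x}(\ell^g<\infty)=1$. Then Theorem \ref{exist_uniq_VLMC} immediately yields $\bar\mu^{\underline x}\in\mathcal M_T(g)$.

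For the uniqueness and weak-Bernoulli part, I would argue as follows. First, since $g>0$, Theorem \ref{exist_uniq_VLMC}'s uniqueness clause suggests proving $\EE_{\bar\mu^{\underline x}}\ell^g<\infty$; but in fact \eqref{eq:assumption-iii-us} only asserts that $\sum_n\1_{(n,\infty)}(\ell^g(y_0^{n-1}))<\infty$ a.s.\ under $\mu^{\underline x}$, which is the statement that, along a forward trajectory built from $\underline x$, only finitely many ``long contexts'' are seen. The natural route is: show this property transfers to $\bar\mu^{\underline x}$ and then to \emph{any} compatible measure $\nu$, so that $\nu$-a.s.\ the context length seen at time $0$ is finite and, moreover, the variation $\text{var}_l^g$ along $\nu$-typical pasts is summable in the strong sense required by the Johansson–Öberg criterion $\EE_\nu\sum_l(\text{var}_l^g)^2<\infty$ (note $\text{var}_l^g(\underline y)=0$ as soon as $l\ge\ell^g(\underline y)$, and under $g>0$ these variations are bounded, so a summability of the number of indices where the context is still ``open'' controls the sum). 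For the weak-Bernoulli conclusion I would invoke the coupling / regeneration argument standard for VLMC under $g>0$: condition \eqref{eq:assumption-iii-us} ensures that, started from any past, the process almost surely reaches a regeneration time (a finite context that is ``hit''), and a Borel–Cantelli / renewal estimate on $\beta(n)$ using $\sup_{\underline x}\mu^{\underline x}(E_n)\to 0$ gives $\beta(n)\to 0$; alternatively one cites the relevant theorem of \cite{gallo/garcia/2013} or \cite{gallo/2009} once the context-length integrability has been established.

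The step I expect to be the main obstacle is the passage from the one-sided, $\underline x$-dependent statements about $\mu^{\underline x}$ to statements valid $\bar\mu^{\underline x}$-a.s.\ and then for \emph{every} compatible measure $\nu\in\mathcal M_T(g)$ — in particular deriving $\EE_\nu\sum_l(\text{var}_l^g)^2<\infty$ (or directly $\EE_\nu\ell^g<\infty$) from \eqref{eq:assumption-iii-us}. The subtlety is that $\bar\mu^{\underline x}$ is only a weak limit of Cesàro averages, so moment conditions like $\EE\,\ell^g<\infty$ are not automatically preserved under the limit (only lower semicontinuous functionals pass to the limit favourably); one must instead use a uniform-in-$n$ tail bound, namely turn $\sup_{\underline x}\mu^{\underline x}(E_n)\to 0$ into a rate, or exploit \eqref{eq:assumption-iii-us} to get a uniform integrability that survives the Cesàro average and the weak limit. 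Handling this carefully — deciding exactly which quantity is uniformly controlled and invoking Fatou in the right direction — is where the real work lies; once $\EE_\nu$ of the appropriate variation-sum is finite, uniqueness is \cite{johansson/oberg/2003} verbatim and weak-Bernoullicity follows from the established VLMC machinery.
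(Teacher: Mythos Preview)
Your existence argument is essentially the paper's: bound $\bar\mu^{\underline x}(\ell^g>i)$ by the Ces\`aro average of $\mu^{\underline x,-j}(E_i)$, decompose each $\mu^{\underline x,-j}(E_i)$ over pasts to get the uniform bound $\sup_{\underline z}\mu^{\underline z}(E_i)$, and let $i\to\infty$; then invoke Theorem~\ref{exist_uniq_VLMC}. That part is fine.

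The uniqueness and weak-Bernoulli part, however, has a genuine gap, and you in fact put your finger on it yourself without resolving it. Your plan is to feed the hypothesis into the Johansson--\"Oberg criterion $\EE_\nu\sum_l(\text{var}_l^g)^2<\infty$, or equivalently $\EE_\nu\ell^g<\infty$. But \eqref{eq:assumption-iii-us} is purely an \emph{almost sure} summability statement: $\sum_n\1_{(n,\infty)}(\ell^g(y_0^{n-1}))<\infty$ $\mu^{\underline x}$-a.s. There is no rate in the first hypothesis (only $\sup_{\underline x}\mu^{\underline x}(E_n)\to0$), and no integrability in the second. You cannot manufacture a moment bound from these data: the hoped-for ``uniform integrability that survives the Ces\`aro average and the weak limit'' simply is not there, and Fatou only goes the wrong way. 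So the route through Johansson--\"Oberg or through the second clause of Theorem~\ref{exist_uniq_VLMC} is closed. Likewise the appeal to ``the established VLMC machinery'' of \cite{gallo/2009,gallo/garcia/2013} for weak-Bernoullicity is too vague; those references require structural assumptions (such as $v$-freeness) not present here.

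The paper takes a completely different route, one that is tailored precisely to an a.s.\ summability hypothesis rather than a moment hypothesis. It proves, for any two compatible stationary measures $\mu,\nu$ (respectively for any $\mu^{\underline x},\mu^{\underline y}$), the absolute continuity $\mu|_{\mathcal F^+}\ll\nu|_{\mathcal F^+}$, via the predictable criterion of Jacod--Shiryaev: with $\alpha_n(z)=\mu(z_n|z_0^{n-1})/\nu(z_n|z_0^{n-1})$ and $d_n=\EE_\nu[(1-\sqrt{\alpha_n})^2\mid\mathcal F^{[0,n-1]}]$, one has $\mu|_{\mathcal F^+}\ll\nu|_{\mathcal F^+}$ iff $\sum_n d_n<\infty$ $\mu$-a.s. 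The VLMC structure gives $\alpha_n(z)=1$ whenever $\ell^g(z_0^{n-1})\le n$, hence $d_n(z)\le\1_{(n,\infty)}(\ell^g(z_0^{n-1}))$, and \eqref{eq:assumption-iii-us} (integrated over pasts to pass from $\mu^{\underline x}$ to any compatible $\mu$) yields the required a.s.\ summability directly---no expectation needed. Uniqueness then follows from a short tail-$\sigma$-field argument (Lemma~\ref{lem:uniq}), and weak-Bernoullicity from the implication (iii)$\Rightarrow$(i) of \cite[Corollary~2.8]{tong/handel/2014} applied to $\mu^{\underline x}|_{\mathcal F^+}\ll\mu^{\underline y}|_{\mathcal F^+}$. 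The key idea you are missing is this Jacod--Shiryaev absolute-continuity step, which converts an a.s.\ condition into the conclusion without ever passing through a moment bound.
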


We  conclude with an even more explicit statement, involving the growth of $\tau$ that we now define. Let $\tau^n:=\{y_{-n}^{-1}:\ell(\underline y)> n\}$ be the set of strings of size $n$ that are suffix of contexts of $\tau$. The \emph{upper exponential growth of $\tau$} is
\begin{equation}\label{eq:growth_tree}
\bar{gr}(\tau):=\limsup_{n}|\tau^n|^{1/n}.
\end{equation}

\begin{Cor}\label{prop:explicitVLMC}
Suppose that $g\geqslant\epsilon$. If $ \bar{gr}({\tau^g})<[1-(|A|-1)\varepsilon]^{-1}$ then there exists a unique $g$-measure which is weak-Bernoulli. 
\end{Cor}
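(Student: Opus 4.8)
The plan is to derive Corollary \ref{prop:explicitVLMC} from Theorem \ref{theo:unifo} by showing that the exponential-growth hypothesis $\bar{gr}(\tau^g)<[1-(|A|-1)\varepsilon]^{-1}$ implies both sufficient conditions there: first, the uniform decay $\sup_{\underline x}\mu^{\underline x}(\{y_0^{+\infty}:\ell^g(y_0^{n-1})>n\})\to 0$, and second, the Borel--Cantelli-type condition \eqref{eq:assumption-iii-us}. So the heart of the matter is to bound $\mu^{\underline x}(\ell^g(y_0^{n-1})>n)$ uniformly in $\underline x$. The key observation is that the event $\{\ell^g(y_0^{n-1})>n\}$ forces the string $y_0^{n-1}$ to be (the reverse of) an element of $\tau^n$, the set of length-$n$ suffixes of contexts; there are $|\tau^n|$ such strings, and under $\mu^{\underline x}$ each has probability $g_n(\underline x y_0^{n-1})\le (1-(|A|-1)\varepsilon)^n$, since strong positivity $g\ge\varepsilon$ gives $g(\,\cdot\,)\le 1-(|A|-1)\varepsilon$ for every coordinate. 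Hence
\[
\sup_{\underline x}\mu^{\underline x}\bigl(\{y_0^{+\infty}:\ell^g(y_0^{n-1})>n\}\bigr)\;\le\;|\tau^n|\,\bigl(1-(|A|-1)\varepsilon\bigr)^n .
\]

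From here I would fix $\rho$ with $\bar{gr}(\tau^g)<\rho<[1-(|A|-1)\varepsilon]^{-1}$, so that $|\tau^n|\le\rho^n$ for all large $n$, giving the bound $\bigl(\rho(1-(|A|-1)\varepsilon)\bigr)^n$ with base strictly less than $1$. This is summable in $n$, which immediately yields the uniform decay to $0$ required for existence of $\bar\mu^{\underline x}\in\mathcal M_T(g)$. For the second hypothesis, I would apply Borel--Cantelli to the measure $\mu^{\underline x}$: since $\sum_n \mu^{\underline x}(\ell^g(y_0^{n-1})>n)<\infty$, for $\mu^{\underline x}$-almost every $y_0^{+\infty}$ only finitely many of the events $\{\ell^g(y_0^{n-1})>n\}$ occur, which is exactly the statement that $\sum_{n\ge1}\1_{(n,\infty)}(\ell^g(y_0^{n-1}))<\infty$ almost surely, i.e. \eqref{eq:assumption-iii-us}. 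Combined with the standing assumption $g\ge\varepsilon>0$ (hence $g>0$), Theorem \ref{theo:unifo} then delivers a unique compatible measure which is weak-Bernoulli.

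One technical point to handle carefully is the precise relationship between the event $\{\ell^g(y_0^{n-1})>n\}$ and membership in $\tau^n$, together with the indexing conventions: $\tau^n$ is defined via $\ell(\underline y)>n$ for pasts $\underline y$, whereas in Theorem \ref{theo:unifo} the relevant object is $\ell^g(y_0^{n-1})$ evaluated on a finite forward string, and the extension conventions in the Notation Alert must be invoked to align these. The inequality $\ell^g(y_0^{n-1})>n$ should be interpreted through the measurability remark that $\{\ell>k\}\in\mathcal F^{[-k,-1]}$, so that it really is an event about the string $y_0^{n-1}$, and the count of such strings is at most $|\tau^n|$ (it may be smaller, which only helps). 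I expect this bookkeeping — rather than any deep idea — to be the main obstacle; the probabilistic content is the one-line union bound plus Borel--Cantelli. I would also remark that this corollary recovers, in the VLMC setting, the growth condition \eqref{eq:growth_assumption} of \cite{gallo/paccaut/2013}, now with the bonus of uniqueness and weak-Bernoullicity rather than mere existence, since $\tau^n$ plays the role that $\mathcal D_g^n$ played there.
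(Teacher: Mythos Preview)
Your proposal is correct and follows essentially the same route as the paper's proof: both obtain the uniform bound $\sup_{\underline x}\mu^{\underline x}(\ell^g(y_0^{n-1})>n)\le |\tau^n|(1-(|A|-1)\varepsilon)^n$, use the growth hypothesis to make this summable in $n$, and then apply Borel--Cantelli to verify \eqref{eq:assumption-iii-us} and invoke Theorem~\ref{theo:unifo}. The only cosmetic difference is that the paper parametrizes the growth bound via an exponent $\alpha\in(0,1)$ (writing $|\tau^i|\le (1-(|A|-1)\varepsilon)^{-i(1-\alpha)}$) whereas you introduce an intermediate base $\rho$, and that the paper separately derives existence from the inclusion $\mathcal D_g^n\subset\tau^n$ together with the earlier growth criterion~\eqref{eq:growth_assumption}, while you obtain existence directly from the first clause of Theorem~\ref{theo:unifo}; both routes are valid and equivalent in content.
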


As mentioned earlier (see \eqref{eq:growth_assumption} above), a similar criterium (not exactly the same because here we consider the growth of the tree, and not only of the set of discontinuities) was proven to be an explicit sufficient condition for existence in \cite{gallo/paccaut/2013}. Here  we improve this to uniqueness and weak-Bernoullicity. 

\section{Explicit examples}\label{sec:examples}

{This section contains several examples of applications of the results of the preceding sections. The last subsection, on the contrary, contains a counterexample showing how things can go so wrong that the discontinuities prevent any stationary measure to be compatible, even though the $g$-function is strongly positive.}

%
\subsection{The renewal process}\label{ex:1}
This example is a well known example, even under the perspective of testing existence questions \citep{CCPP}. The main objective here is to show that Theorem \ref{theo:exist} is tight in the sense that we cannot relax its assumption in general. For the renewal processes indeed, our theorem retrieves the exact (necessary and sufficient) condition for existence as we now explain.

Let $q=\{q_i\}_{i\in\NN^\star}$ be a sequence of $(0,1)$-valued real numbers and define $\ell_1(\underline{x}):=\inf\{k\geqslant1:x_{-k}=1\}$ (with $\ell_1(\underline{0})=\infty$ in which $\underline 0$ denotes the past composed of infinitely many $0$'s). Consider the $g$-function on $A=\{0,1\}$ defined through $g(\underline{x}1):=q_{\ell_1(\underline{x})}$, in which we also fixed some $q_\infty\in[0,1]$. This model falls in the class of probabilistic context trees and in particular we have $\text{var}_k^g(\underline x)=0$ for any $k\geqslant \ell_1(\underline{x})$ when $\underline{x}\neq\underline{0}$, while
\[
\text{var}_k^g(\underline{0})=\sup_{m,n\geqslant k}|q_m-q_n|.
\]

We distinguish 3 cases, according to $V(q):=\sum_{k\geqslant1}\prod_{i=1}^{k}(1-q_i)$ and $q_\infty$.
\begin{enumerate}
\item Case 1. Suppose that $V(q)<\infty$. Choosing any $\underline{x}$ with $x_{-1}=1$, we notice that $\mu^{\underline{x}}$ is the measure of an undelayed renewal sequence \citep[Chapter 4, Section 3.2]{bremaud2013markov}, and the renewal theorem guarantees that $\mu^{\underline{x},-i}([1]_0)\rightarrow 1/m(q)$ where $m(q)$ is the expected distance between two consecutive $1$'s in the sequence. It is not difficult to see that this expectation is (recall $x_{-1}=1$)
\begin{align*}
m(q)&=\sum_{k\ge0}k\mu^{\underline x}([0^k1]_{k})=1+
\sum_{k\geqslant1}\mu^{\underline{x}}([0^k]_{k-1})\\&=1+\sum_{k\geqslant1}(1-q_1)(1-q_2)\ldots(1-q_{k})\\&=1+V(q)
\end{align*}
where $0^k$ stands for the string of $k$ consecutive $0$'s.
In other words, if $V(q)<\infty$, we have $\mu^{\underline{x},-i}([1]_0)\rightarrow 1/m(q)>0$, and therefore $\bar\mu^{\underline x}(1)>0$ since it is the Ces\`aro mean limit of $\mu^{\underline{x},-i}([1]_0)$'s. Since it is a stationary measure, this automatically implies that   $\bar\mu^{\underline{x}}(\underline0)=0$, and therefore that the condition of our theorem also holds, since $\underline{0}$ is the only possible discontinuity point. We therefore have existence in this case.

\item Case 2. Suppose that $V(q)=\infty$ and $q_\infty=0$. This assumption implies that $q_i\rightarrow0=q_\infty$ and therefore we have continuity everywhere. We therefore also have existence in this case. Using the same argument as above we can also see that, since $V(q)=\infty$  implies that the expected length between two consecutive $1$'s is infinite, the renewal theorem  applied to the undelayed renewal sequence $\mu^{\underline x}$ (with $x_{-1}=1$) gives $\mu^{\underline{x},-i}([1]_0)\rightarrow``1/\infty"=0$. Thus, by the same argument as above, $\bar\mu^{\underline x}(1)=0$ and thus, by stationarity, $\bar\mu^{\underline x}$ is the Dirac measure on the point ``all $0$''.

\item Case 3. Suppose that $V(q)=\infty$ and $q_{\infty}>0$. This assumption implies that $q_i\rightarrow0\ne q_\infty>0$, and therefore, that $\underline0$ is a discontinuity point (and indeed it is the only one). We will prove that $\bar\mu^{\underline x}(\underline 0)>0$ for any $\underline x$, that is, that this case violates the conditions of Theorem  \ref{theo:exist}. Since this  is precisely the case of non-existence, as proved in \cite{CCPP}, this shows that our theorem is optimal on this example. 

First,  $V(q)=\infty$  implies that the expected length between two consecutive $1$'s is infinite. For any $\underline x$ such that $\ell_1(\underline x)<\infty$, $\mu^{\underline{x},-i}$ is a renewal sequence, with delay $\ell_1(\underline x)-1$ if $\ell_1(\underline x)>1$ \citep[Chapter 4, Section 3.2]{bremaud2013markov}. The renewal  theorem now gives $\mu^{\underline{x},-i}([1]_0)\rightarrow``1/\infty"=0$ for any such $\underline x$. It is not difficult to see that the same convergence holds for the remaining past $\underline0$, since assuming $q_{\infty}>0$ implies that a $1$ occurs with probability 1 under $\mu^{\underline0}$, taking us back to the undelayed case. 
The convergence to $0$ for any $\underline x$ automatically implies that $\bar\mu^{\underline x}([1]_0)=0$, that is, $\bar\mu^{\underline x}(\underline 0)=1$. In other words, the only discontinuity has $\bar\mu^{\underline x}$ full measure for any $\underline x$, and the condition of Theorem  \ref{theo:exist} is violated.

\end{enumerate}

An extension of this renewal argument could be done for $v$-free probabilistic context trees $(\tau,p)$ in general, that is, when $v$ does not appear as substring of the infinite size contexts of $\tau$. For instance, the context tree of the renewal process is $1$-free. Reasoning as above, if we are able to show that, for some past $\underline x$, $\mu^{\underline x,-i}([v]_{|v|-1})$ has a strictly positive limit, then $\bar\mu^{\underline x}$ is a stationary compatible measure by Theorem \ref{theo:exist}. Notice that if we start with any $\underline x$ satisfying $x_{-|v|}^{-1}=v$, then $\mu^{\underline x,-i}([v]_{|v|-1}),i\geqslant0$ does not depend on $x_{-\infty}^{-|v|-1}$, so we can actually use the notation $\mu^{v,-i},i\geqslant0$ in this case. Consider 
\[
T^{v}(x_0^{+\infty})=\inf\{t\geqslant|v|-1:x_{t-|v|+1}^{t}=v\}. 
\]
What can be proved in this case, using a renewal argument, is that if $\mathbb E_{\mu^{v}}T^{v}<\infty$ then $\mu^{v,-i}([v]_{|v|-1})$ has strictly positive limit, which would guarantee existence. Observe that the condition $\mathbb E_{\mu^{v}}T^{v}<\infty$ actually only depends on the set of transition probabilities $p$ of the pair $(\tau,p)$. We do not get into such details, but it would be interesting to investigate the relation between this assumption ($v$-free probabilistic context tree with finite expected return to $v$) with the assumptions of \cite{cenacetal/2018} involving finiteness of certain series and the notions of ``$\alpha$-lis" and ``stability'' therein defined.

\subsection{Existence for $1$-free cases on $A=\{0,1\}$}\label{ex:ex_vfree}

The binary renewal process introduced above is a very special $1$-free example, in which the distance to the last occurrence of $1$ backwards in time is the size of the context. 
We have proved  that, when $q_\infty>0$, there exists a stationary measure if and only if $V(q)$ is finite. But the main general characterisation of existence was that $\bar\mu^{\underline x}$ gives positive weight to the symbol $1$. This latter is also a sufficient and necessary condition for any $1$-free examples, that is, for any $g$ having $\underline 0$ as  unique discontinuity point since we are on $A=\{0,1\}$. In general however, it is complicated to get one explicit condition which is at the same time sufficient and necessary. We now explain how to get necessary and also sufficient condition. Let $s=\{s_i\}_{i\geqslant1}$ and $r=\{r_i\}_{i\geqslant1}$ be the $[0,1]$-valued sequences defined through $s_i:=\inf_{\underline x}g(\underline x10^{i-1}1)$ and $r_i:=\sup_{\underline x}g(\underline x10^{i-1}1)$ respectively.  We claim that if $V(s)<\infty$ there exists a stationary measure compatible with $g$, and if $V(r)=\infty$ then there does not exist such measure. Let us explain  the  case $V(s)<\infty$ (the case $V(r)=\infty$ follows equally). Choose any $\underline{x}$ with $x_{-1}=1$ and consider the measure $\mu_{\text{ren}}^{\underline x}$ obtained starting from $\underline x$ and using the probabilistic context tree of the renewal process, as in the preceding subsection, but  with sequence $s$ instead of $q$. If $V(s)<\infty$, we know that $\mu_{\text{ren}}^{\underline x,-k}([1]_0)\rightarrow 1/m(s)>0$. So we are done if we are able to prove that $\mu_{g}^{\underline x,-k}([1]_0)\geqslant \mu_{\text{ren}}^{\underline x,-k}([1]_0)$ for any $k$. This follows from a well-known coupling argument \citep[see Part IV chapter 2]{lindvall/1992}  between ordered kernels (in our case, $g$-functions). Indeed, consider two $g$-functions $g$ and $h$ (on $A=\{0,1\}$ in the present case, but can be extended) which are ordered in the sense that $g(\underline x1)\geqslant h(\underline y1)$ for any $\underline x\geqslant\underline y$ with respect to coordinate-wise order. Then, for any ordered pasts $\underline x\geqslant \underline y$, there exists a  measure $\nu^{(\underline x,\underline y)}$ on $(A\times A)^{\mathbb N}$, which is a coupling of $\mu^{\underline x}_g$ and $\mu^{\underline y}_h$, and which is supported on $\{(x_{0}^{+\infty},y_{0}^{+\infty}):x_i\geqslant y_i,i\geqslant0\}$. This means in particular that $\mu^{\underline x,-i}_g([1]_0)\geqslant\mu^{\underline y,-i}_h([1]_0)$ for all $i\geqslant0$.

\subsection{More examples of the literature}

Examples can be taken from \cite{gallo/2009,gallo/garcia/2013,gallo/paccaut/2013,Desantis/piccioni/2012}. All these works assumed that $\inf g>0$, with exception of \cite{gallo/paccaut/2013}, who assumed something slightly weaker than strong positivity (their assumption is technical and will not be introduced here for the sake of brevity). We claim that our results from Section \ref{sec:main} cover any of these examples as they were presented. We refer the interested reader to these references.

\subsection{Examples of application of Theorem \ref{theo:unifo}}
A first natural application of Theorem \ref{theo:unifo} is the renewal process itself, with sequence $q$ satisfying $q_i\in(0,1)$ for any $i\in\mathbb N^\star\cup\{\infty\}$ and  $V(q)<\infty$ (see Subsection \ref{ex:1}). Indeed,  observe that
\begin{equation}\label{ex:renew_bern}
\mu^{\underline x}(\ell_1(y_0^{i-1})>i)=\mu^{\underline x}([0^i]_{i-1})=\prod_{j=k}^{k+i-1}(1-q_j)
\end{equation}
if $\ell_1(\underline x)=k$, and we also have to consider separately the case $\underline x=\underline 0$ which gives $\mu^{\underline x}([0^i]_{i-1})=(1-q_{\infty})^i$. 
By the assumptions and using Borel-Cantelli we conclude that $\mu^{\underline x}(\ell_1(y_0^{i-1})>i\,,\,\text{i.o.})=0$ for any $\underline x$, which means that Theorem \ref{theo:unifo} applies. So we have uniqueness and weak-bernoullicity of the renewal process but this is well-known in the literature (see \cite{shields/1996} for instance). 

Let us now consider another class, generalising the renewal process, which was introduced by \cite{gallo/2009}. Consider a function $h:\mathbb{N}\rightarrow \mathbb{N}$ and consider the probabilistic context tree $(\tau,p)$ with $\tau=\{\underline 0\}\cup\cup_{i\geqslant1}\cup_{v\in A^{h(i)}}\{v10^{i-1}\}$. There is a unique infinite context, the past $\underline 0$, just as for the renewal process. Here, if the last occurrence of a $1$ when we look backward is at distance $j$, then the size of the context is the deterministic function $H(j):=j+h(j)$ (the renewal process is a particular case with $h\equiv0$). In any case, we are in presence of a $1$-free probabilistic context tree, and we know (see Subsection \ref{ex:ex_vfree}) that, letting $s_i:=\inf_{\underline x}g(\underline x10^{i-1}1),i\geqslant1$, the assumption $V(s)<\infty$ guarantees existence of a stationary measure. We now seek for a sufficient condition on the sequences $s$ and $h$ to get uniqueness and weak-bernoullicity, and we wish to do this without assuming $\inf g >0$ but only $g>0$.  In order to simplify the presentation, suppose that $H$ is strictly increasing,  denote by $H^{\leftarrow}$ its inverse and assume also that $s_i$ decreases to $0$ (but $s_\infty>0$ in order to keep $g>0$). Observe that for any $\underline x$, $\mu^{\underline x}(\ell(y_0^{i-1})>i)=\mu^{\underline x}([0^{H^{\leftarrow}(i)}]_{i-1})$. We will use  the renewal measure $\mu_{\text{ren}}^{\underline x}$ defined in \ref{ex:ex_vfree}. The same ordering argument used there applies here and  gives $\mu^{\underline x}([0^{H^{\leftarrow}(i)}]_{i-1})\le\mu_{\text{ren}}^{\underline x}([0^{H^{\leftarrow}(i)}]_{i-1})$. Observe that
\begin{align}\label{eq:1freeWB}
\mu_{\text{ren}}^{\underline x}([0^{H^{\leftarrow}(i)}]_{i-1})&=\mu_{\text{ren}}^{\underline x}([0^{i}]_{i-1})+\sum_{l=0}^{i-H^{\leftarrow}(i)-1}\mu_{\text{ren}}^{\underline x}([10^l0^{H^{\leftarrow}(i)}]_{i-1}).
\end{align}
We now need to show that both terms are summable in $i$ (for any $\underline x$), since then, using Borel-Cantelli and Theorem \ref{theo:unifo}, we can conclude uniqueness and weak-bernoullicity. Using exactly the same argument as for \eqref{ex:renew_bern}, the first term of \eqref{eq:1freeWB} is summable independently of $\underline x$ and of $h$, and no further assumption on $s$ has to assumed else than $V(s)<\infty$ for this term. We now come to the second term of \eqref{eq:1freeWB}, which equals
\[
\sum_{l=0}^{i-H^{\leftarrow}(i)-1}\mu_{\text{ren}}^{\underline x}([1]_{i-l-H^{\leftarrow}(i)-1})\prod_{j=1}^{l+H^{\leftarrow}(i)}(1-s_j). 
\]
This term is upper bounded (simply use $\mu_{\text{ren}}^{\underline x}([1]_{i-l-H^{\leftarrow}(i)-1})\leqslant1$) by
\[
\sum_{l=0}^{i-H^{\leftarrow}(i)-1}\prod_{j=1}^{l+H^{\leftarrow}(i)}(1-s_j),
\]
independently of $\underline x$. So we will have uniqueness and weak-Bernoullicity if 
\[
\sum_{i\geqslant1}\sum_{l=0}^{i-H^{\leftarrow}(i)-1}\prod_{j=1}^{l+H^{\leftarrow}(i)}(1-s_j)<\infty. 
\]
As an example, let us consider the case in which $s_i=1-\left(\frac{i}{i+1}\right)^\alpha,i\geqslant1$, with $\alpha>1$, so that  $V(s)<\infty$. Let $h(i)=C_1\times i^\delta$ for some positive constants $C_1\in\mathbb N$ and $\delta>0$. Then we have $H^{\leftarrow}(i)=C_2i^{1/\delta}$ for some $C_2>0$ and therefore 
\begin{align*}
\sum_{i\geqslant1}\sum_{l=0}^{i-H^{\leftarrow}(i)-1}\prod_{j=1}^{l+H^{\leftarrow}(i)}(1-s_j)&=\sum_{i\geqslant1}\sum_{l=0}^{i-H^{\leftarrow}(i)-1}\frac{1}{(l+H^{\leftarrow}(i)+1)^\alpha}\\
&=\sum_{i\geqslant1}\sum_{l=H^{\leftarrow}(i)+1}^{i}\frac{1}{l^\alpha}\le\sum_i\sum_{l\geqslant H^{\leftarrow}(i)+1}\frac{1}{l^\alpha}\\
&\le C_3\sum_i\frac{1}{H^{\leftarrow}(i)^{\alpha-1}}=C_4\sum_i\frac{1}{i^{(\alpha-1)/\delta}}
\end{align*}
which is summable if and only if $\delta+1<\alpha$.

\subsection{An example of application of Corollary \ref{prop:explicitVLMC}}
\ \\
\vskip 10pt

\begin{minipage}{0.42\textwidth}
\begin{tikzpicture}[scale=0.2]
\tikzset{every leaf node/.style={draw=none,circle=none},every internal node/.style={draw,circle,fill,scale=0.01}}
\tikzset{sibling distance=2pt}
\tikzset{level distance=42pt}
\Tree
[.{} \edge[line width=7pt];
[.{} 
	[.{} 
		[.{}
			[.{}
				[.{} 
					[.{} 
						[.{} 
							\edge[line width=2pt,dashed];\node[fill=none,draw=none]{};\edge[draw=none];\node[fill=none,draw=none]{};
						]
						{}
					]
					{}
				]
				{}
			]
			{}
		]
		{}
	] 
\edge[line width=7pt]; [.{} \edge[line width=7pt];
[.{} \edge[line width=7pt];
[.{} \edge[line width=7pt];
[.{} 
[.{} 
[.{}
[.{}
[.{} 
[.{} 
[.{} 
\edge[line width=2pt,dashed];\node[fill=none,draw=none]{};\edge[draw=none];\node[fill=none,draw=none]{};
]
{}
]
{}
]
{}
]
{}
]
{}
] 
\edge[line width=7pt]; [.{} 
{} 
\edge[line width=7pt]; [.{} \edge[line width=7pt];
[.{} 
[.{} 
[.{}
[.{}
[.{} 
[.{} 
[.{} 
\edge[line width=2pt,dashed];\node[fill=none,draw=none]{};\edge[draw=none];\node[fill=none,draw=none]{};
]
{}
]
{}
]
{}
]
{}
]
{}
] 
\edge[line width=7pt]; [.{} 
{} 
\edge[line width=7pt]; [.{} 
\edge[line width=7pt]; [.{}
\edge[line width=7pt]; [.{} 
\edge[line width=7pt]; [.{}
\edge[line width=7pt]; [.{}
\edge[line width=7pt]; [.{}
{} 
\edge[line width=7pt]; [.{} 
\edge[draw=none];\node[fill=none,draw=none]{};\edge[line width=2pt,dashed];\node[fill=none,draw=none]{};
] 
]
{}
]
{}
]
{}
]
{}
]
[.{} 
{}
[.{}
{}
[.{} 
{}
[.{}
{}
[.{}
{}
[.{}
\edge[draw=none];\node[fill=none,draw=none]{};\edge[line width=2pt,dashed];\node[fill=none,draw=none]{};
]
]
]
]
]
] 
]
]
]
[.{} 
{}
[.{}
{}
[.{} 
{}
[.{}
{}
[.{}
{}
[.{}
\edge[draw=none];\node[fill=none,draw=none]{};\edge[line width=2pt,dashed];\node[fill=none,draw=none]{};
]
]
]
]
]
] 
]
] 
] 
{} 
]
{} 
]
[.{} 
{}
[.{}
{}
[.{}
{}
[.{}
{}
[.{}
{}
[.{}
\edge[draw=none];\node[fill=none,draw=none]{};\edge[line width=2pt,dashed];\node[fill=none,draw=none]{};
]
]
]
]
]
] 
] 
] 
{} 
]
\end{tikzpicture}
\end{minipage}%
\begin{minipage}{0.55\textwidth}
We define a probabilistic context tree $g=(\tau,p)$ on  $A=\{0,1\}$ in which every infinite context will be a discontinuity point of $g$. The context tree $\tau$ is built around the infinite context (the \emph{trunk}) obtained by concatenating the binary coding of all the natural numbers (in bold on the picture)  $\ldots0001101100010$. We then add the infinite \emph{branches}  of the form $\underline\alpha w(k)$ where $w(k)$ is the concatenation of the codings of the $k$ first numbers and $\alpha$ is equal to the last symbol of $w(k)$ (that is, formally, $\alpha=w(k)_{-|w(k)|}$). The \emph{leaves} of these branches are of the form $\beta\alpha^l w(k)$ where $\beta=1-\alpha$, and finally, there are also leaves rising directly from the trunk. All these leaves constitute the finite contexts of $\tau$.
\end{minipage}

In this way, the discontinuity points of $g$ are the branches, $\underline 00$, $\underline 110$, $\underline 00010$,  $\underline 1100010$ and so on, as well as the trunk. In order to get the discontinuities, it is enough to alternate the value of $p$ on the leaves along the infinite branches. For example, for the first discontinuity point $0^{\infty}0$, take $p(1|10^{2k})=\epsilon=1-p(1|10^{2k+1}),k\geqslant1$, for some  $\epsilon\in(0,1/2)$. We can repeat the same method for all the leaves of the tree (all the finite contexts).

It is not difficult to see that this example fits into the conditions of Corollary \ref{prop:explicitVLMC}. First, we indeed have $\inf g=\varepsilon>0$. Second, if $n$ is fixed, $|\tau_n|$ is simply the number of infinite branches that started before $n$, plus one, for the trunk, that is, $|\tau_n|=\max\{k\ge1:|w(k_n)|< n\}+1$. A simple calculation then shows that $|\tau^n|\leqslant n$ and consequently $\bar{gr}({\tau})=1$ which is strictly smaller than $[1-(|A|-1)\varepsilon]^{-1}=(1-\varepsilon)^{-1}$ independently of the value of $\varepsilon$. We  conclude that this example has a unique compatible stationary measure which is weak-Bernoulli.

\subsection{An example of non-existence with $\inf g>0$}\label{sec:non-exist-positive}

In the literature, as far as we know, all the examples  of $g$-functions having no stationary compatible $g$-measure satisfy $\inf g=0$ (see for instance the example given in Subsection \ref{ex:1}, and other examples in \cite{cenacetal/2018}). Here we present an example, suggested to us by Noam Berger, of $g$-function satisfying $\inf g>0$ for which no stationary compatible measure exists. 

 First, let us introduce the sets
 \begin{align*}
 \mathcal X^-_>&:=\{\underline x:\limsup\frac{1}{n}\sum_{i=1}^nx_{-i}>1/2\}\\
 \mathcal X^-_{\leqslant}&:=\{\underline x:\limsup\frac{1}{n}\sum_{i=1}^nx_{-i}\leqslant1/2\}.
 \end{align*} 
 The $g$-function $\hat g$ is defined on  $A=\{0,1\}$ through
\begin{align*}
\hat g(\underline x1)&=0.3
\quad\hspace{0.3cm}\text{if} \quad\underline x\in\mathcal X^-_>\\
\hat g(\underline x1)&=0.7
\quad\hspace{0.3cm}\text{if}\quad \underline x\in\mathcal X^-_{\leqslant}.
\end{align*}
Clearly, $\hat g\geqslant0.3$. Moreover, it is everywhere discontinuous. Indeed, for any past $\underline x\in\mathcal X^-_>$, taking $\underline z\in\mathcal X^-_{\leqslant}$, we have $0.7=g(\underline zx_{-k}^{-1}1)\nrightarrow 0.3=g(\underline x1)$, and a parallel reasoning can be done for $\underline x\in \mathcal X^-_{\leqslant}$. 
\begin{Prop}
	There exists no stationary compatible measure for $\hat g$. 
\end{Prop}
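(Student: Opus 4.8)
The plan is to argue by contradiction: assume some $\mu\in\mathcal M_T(\hat g)$ exists, and derive an identity for the limiting empirical frequency of $\mu$ that has no solution.

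First I would introduce the function
\[
Y(x):=\limsup_{n\to\infty}\frac1n\sum_{i=1}^n x_{-i}
\]
on $\mathcal X$ and record its two crucial and opposite features. On one hand, $Y$ depends only on the past coordinates $x_{-1},x_{-2},\ldots$, hence it is $\mathcal F^-$-measurable, and by the very definition of the sets involved, $\mathcal X^-_>=\{x:Y(x)>1/2\}$ and $\mathcal X^-_\leqslant=\{x:Y(x)\leqslant1/2\}$, so that, as a function on $\mathcal X$, $\hat g(\underline{\,\cdot\,}1)=0.3\,\1_{\{Y>1/2\}}+0.7\,\1_{\{Y\leqslant1/2\}}$. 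On the other hand, $Y\circ T=Y$ pointwise (the extra term $x_0/n$ vanishes in the limit), so $Y$ is measurable with respect to the invariant $\sigma$-algebra $\mathcal I:=\{B\in\mathcal F:T^{-1}B=B\}$; thus $\sigma(Y)\subseteq\mathcal F^-\cap\mathcal I$. Next, applying Birkhoff's ergodic theorem to the invertible measure-preserving map $T^{-1}$ and the function $x\mapsto x_{-1}$ (using that $\mathcal I$ is the same for $T$ and $T^{-1}$, and that $\EE_\mu[\,\cdot\mid\mathcal I]$ is shift-invariant, so $\EE_\mu[x_{-1}\mid\mathcal I]=\EE_\mu[\1_{[1]_0}\mid\mathcal I]$ since $x_0=\1_{[1]_0}(x)$), I get $\frac1n\sum_{i=1}^n x_{-i}\to\EE_\mu[\1_{[1]_0}\mid\mathcal I]$ $\mu$-a.s.; in particular $Y=\EE_\mu[\1_{[1]_0}\mid\mathcal I]$ $\mu$-almost surely.

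Then I would bring in compatibility. Equation \eqref{eq:compatible} reads $\EE_\mu[\1_{[1]_0}\mid\mathcal F^-](x)=\hat g(\underline x1)=0.3\,\1_{\{Y>1/2\}}(x)+0.7\,\1_{\{Y\leqslant1/2\}}(x)$ for $\mu$-a.e.\ $x$. Conditioning on $\sigma(Y)$ and using $\sigma(Y)\subseteq\mathcal F^-$, the tower property yields $\EE_\mu[\1_{[1]_0}\mid Y]=0.3\,\1_{\{Y>1/2\}}+0.7\,\1_{\{Y\leqslant1/2\}}$, the right-hand side already being $\sigma(Y)$-measurable. But conditioning on $\sigma(Y)$ the other way, using $\sigma(Y)\subseteq\mathcal I$ together with $\EE_\mu[\1_{[1]_0}\mid\mathcal I]=Y$ a.s., gives $\EE_\mu[\1_{[1]_0}\mid Y]=\EE_\mu\bigl[\EE_\mu[\1_{[1]_0}\mid\mathcal I]\bigm|Y\bigr]=\EE_\mu[Y\mid Y]=Y$. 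Equating the two expressions forces, $\mu$-a.s.,
\[
Y=0.3\,\1_{\{Y>1/2\}}+0.7\,\1_{\{Y\leqslant1/2\}},
\]
which is impossible: on $\{Y>1/2\}$ it forces $Y=0.3<1/2$, and on $\{Y\leqslant1/2\}$ it forces $Y=0.7>1/2$. Hence no such $\mu$ exists and $\mathcal M_T(\hat g)=\emptyset$.

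The computations are trivial; the point that deserves care — and which I would flag as the heart of the argument — is the double role of $Y$: it is at once a function of the past (so that conditioning $\EE_\mu[\1_{[1]_0}\mid\mathcal F^-]$ on it changes nothing) and a shift-invariant function (so that the ergodic theorem identifies it with the conditional frequency $\EE_\mu[\1_{[1]_0}\mid\mathcal I]$). Morally, $\hat g$ is self-correcting — an excess of ones in the recent empirical past depresses the probability of the next symbol being a one — and no stationary law can sustain such feedback. The only technical subtlety worth double-checking is the correct invocation of the backward form of Birkhoff's theorem, which is why I would spell out its reduction to ordinary Birkhoff for $T^{-1}$ as above.
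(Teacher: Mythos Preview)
Your argument is correct and complete. It differs in route from the paper's proof, though both hinge on the same underlying observation that the level sets $\{Y>1/2\}$ and $\{Y\leqslant 1/2\}$ are shift-invariant.

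The paper first reduces to the ergodic case: since $\mathcal M_T(\hat g)$ is convex, it picks an extremal (hence ergodic) element $\hat\mu$, uses ergodicity to force $\hat\mu(\mathcal X^-_>) \in \{0,1\}$, and then computes by compatibility that $\hat\mu$ restricted to the future is the Bernoulli product measure with parameter $0.3$ (resp.\ $0.7$), which by the law of large numbers concentrates on the \emph{wrong} invariant set. Your approach bypasses the extremal decomposition entirely: you work with an arbitrary $\mu\in\mathcal M_T(\hat g)$, invoke Birkhoff's theorem to identify $Y$ with $\EE_\mu[\1_{[1]_0}\mid\mathcal I]$, and then use the tower property through $\sigma(Y)\subseteq\mathcal F^-\cap\mathcal I$ to obtain the self-contradictory equation $Y=0.3\,\1_{\{Y>1/2\}}+0.7\,\1_{\{Y\leqslant1/2\}}$ almost surely. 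What you gain is that you never need to argue that $\mathcal M_T(\hat g)$ has extremal elements (which, for a discontinuous $g$, is not entirely free); what the paper's version gains is concreteness---it actually identifies what the candidate measure would have to be. Both proofs are short and the spirit is the same: the feedback built into $\hat g$ pushes the empirical density of $1$'s away from whichever half-line it currently occupies, so no stationary equilibrium can exist.
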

%
\begin{proof}
 Suppose there exists a stationary compatible measure $\mu$. Since the set $\mathcal M_T(g)$  is convex there necessarily exists at least one extremal measure. Let us denote by $\hat\mu$ one such extremal measure which is therefore ergodic  \citep[Theorem 3.3]{fernandez/maillard/2005}. Since both, $X^-_>$ and $\mathcal X^-_{\leqslant}$ are shift invariant measurable sets (when seen as subsets of $\mathcal X$, that is, their cartesian product with $\XX^+$), it follows, by ergodicity, that either $\hat\mu(X^-_>)=0$ and $\hat\mu(\mathcal X^-_{\leqslant})=1$ or $\hat\mu(X^-_>)=1$ and $\hat\mu(\mathcal X^-_{\leqslant})=0$. On the other hand, by compatibility and using the definition of $\hat g$, we have
\begin{align*}
\hat\mu([a_1^n])&=\int_{\XX^-}\hat g_n(\underline xa_1^n)\hat\mu(d\underline x)\\
&=\hat\mu(X^-_>)\prod_{i=1}^np_{0.3}(a_i)+\hat\mu(\mathcal X^-_{\leqslant})\prod_{i=1}^np_{0.7}(a_i),
\end{align*}
in which we used the notation $p_{0.3}(1):=1-p_{0.3}(0)=0.3$ and $p_{0.7}(1):=1-p_{0.7}(0)=0.7$.
We proceed  by contradiction, and suppose $\hat\mu(\mathcal X^-_{\leqslant})=0$. In this case $\hat\mu([a_1^n])=\prod_{i=1}^np_{0.3}(a_i)$ for any $n\geqslant1$ and any $a_1^n\in A^n$, which means that $\hat\mu|_{\mathcal F^{[0,+\infty]}}$ (and therefore $\hat\mu$ itself, by invariance) is the product measure of Bernoulli $\mathcal B(0.3)$ distributed random variables. But in this case $\hat\mu(\mathcal X^-_{\leqslant})=1$, which is a contradiction. A similar contradiction arises if we initially assume $\hat\mu(\mathcal X^-_{\leqslant})=1$. Therefore, their cannot exists a stationary compatible measure. 
\end{proof}

 \section{Some further important remarks}\label{sec:discussion}

The present section intends to make a more complete picture of actual knowledge on the question of discontinuities in $g$-functions in the literature. 

\subsection{Discussion on the meaning of ``non-regular $g$-measure''}
As we mentioned earlier, the terminology ``non-regular $g$-measure'' was an abuse, meaning that we are working on measures compatible with non-regular $g$-functions. There is however a subtlety here: since the definition of compatibility of $\mu$ with $g$ is given \emph{modulo} a set of $\mu$-measure $0$, infinitely many $g$-functions specify $\mu$, many of them being discontinuous. 

The example of Subsection \ref{ex:1} is very enlightening on this subject. If we take $q_k$ vanishing monotonically to $0$, so slowly that $V(q)<\infty$, then we will have existence of a stationary measure $\mu$, independently of the value of $q_\infty$. So  if we put $q_\infty>0$, we have a discontinuity at $\underline 0$, but since this past has $\mu$-measure $0$, we can remove the discontinuity by putting $\bar q_\infty=0$ instead, and obtaining a new $g$-function, $\bar g$ say, with which $\mu$ is still compatible. The discontinuity at $\underline 0$ is called non-essential in the Gibbs/non-Gibbs literature \citep{enter/fernandez/sokal/1993}. A radical example is given in the next subsection, in which the $g$-function is non-essentially discontinuous everywhere. On the other hand, if we take $q_i$ equals $\epsilon_1\in(0,1)$ or $\epsilon_2\in(0,1)$ (with $\epsilon_1\ne\epsilon_2$) according to whether $i$ is odd or even, then there still exists a compatible measure $\mu$, but this time the discontinuity of $g$ at $\underline0$ cannot be removed by changing $g$ on a set of $\mu$-measure $0$. In this case the discontinuity is said to be essential. 

Coming back to our contribution here, our results concern essential as well as non-essential discontinuities: all we know is that there exists a $g$-measure (eventually unique and weak-Bernoulli) compatible with the given $g$-function.
%

\subsection{An everywhere non-essentially discontinuous example}\label{subsec:noness} 
As explained above, $g$-functions may be artificially discontinuous: a measure may be compatible with it in such a way that we can a posteriori transform $g$ to an everywhere continuous $g'$ by changing $g$ on a set of null $\mu$-measure. A particularly radical example is the following, on $A=\{0,1\}$, put
\begin{align*}
g(\underline x1)&=\epsilon
\quad\quad\hspace{0.3cm}\text{if $\sum_{i\geqslant1}\frac{x_{-i}}{2^i}\in\mathbb{Q}\cap[0,1[$}\\
g(\underline x1)&=1-\epsilon
\quad\text{if $\sum_{i\geqslant1}\frac{x_{-i}}{2^i}\in\mathbb{Q}^c\cap[0,1[$.}
\end{align*}
In words, the value of $g(\underline x1)$ depends on whether or not $\underline x$ codifies a rational number of $[0,1]$. 
It turns out that the product measure $\mu$ with time-independent marginal $\mu(1)=1-\epsilon$ is compatible with $g$. This follows  from the characterisation of \cite[Th\'eor\`eme 1 (iii)]{Ledrappier}: $\mu$ is compatible if its entropy $h(\mu)$ equals  $\int\log gd\mu$. In our case it is well-known that $h(\mu)=\epsilon\log\epsilon+(1-\epsilon)\log(1-\epsilon)$. On the other hand (we abuse notation by writing $\underline x\in\mathbb{Q}$ to mean that $\underline x$ codifies a rational number)
\begin{align*}
\int \log g(\underline xx_0)d\mu(\underline xx_0)&=\int\log g(\underline x1)\mu(1)d\mu(\underline x)+\int\log g(\underline x0)\mu(0)d\mu(\underline x)\\
&=(1-\epsilon)\left[ \mu(\underline x\in\mathbb{Q})\log \epsilon+\mu\left(\underline x\in[0,1[\setminus\mathbb{Q}\right)\log (1-\epsilon)\right]\\&\hspace{0.9cm}+\epsilon\left[ \mu(\underline x\in\mathbb{Q})\log (1-\epsilon)+\mu\left(\underline x\in[0,1[\setminus\mathbb{Q}\right)\log \epsilon\right]\\
&=\epsilon\log\epsilon+(1-\epsilon)\log(1-\epsilon),
\end{align*}
where we used independence in the first equality, the definition of $g$ in the second equality and finally the well-known fact that the set of infinite sequences that codify the rational numbers have  $\mu$-measure $0$ in the last equality. In fact, what we are saying is simply that, changing $g(\underline x1)$ to $1-\epsilon$ for any rational $\underline x$ yields $g\equiv1-\epsilon$ (which obviously has $\mu$ as compatible measure), and this change concerns a set of $\mu$-measure $0$.

\subsection{An everywhere essentially discontinuous $g$-measure}\label{sec:every} 

There  exist stationary measures which are compatible with {everywhere} essentially  discontinuous $g$-functions. A very simple example is known in the literature of non-Gibbs measures \citep[for instance]{lorinczi/1998} and seems to be  originally due \cite[Chapter 3.12]{furstenberg1960stationary}. Consider the alphabet $A=\{-1,+1\}$ and  take $\mu$ the product measure with marginal $\mu(-1)=\epsilon$ with $\epsilon\neq 1/2$. Then, consider the function $F:\mathcal{X}\rightarrow\mathcal{X}$  defined by $F(x)_i=x_{i-1}x_i$ (product of $x_{i-1}$ and $x_i$). The measure $\nu:=\mu\circ F^{-1}$ is stationary by construction, and can be proved to be everywhere  discontinuous. This is proven in \cite[p.38]{verbitskiy/15}. We prove here that it is everywhere essentially discontinuous. Recall first some facts about this example.

For $x\in\mathcal{X}^{\leqslant 0}$, define the density of $x$ (when it exists) as
\[
d(x)=\lim_{n\to\infty}\frac{\#\{-n\leqslant i\leqslant -1, x_i=-1\}}{n}.
\]
Because of the law of large numbers, the points $x$ generic for the measure $\mu$ are those for which $d(x)=\epsilon$.
If $y=\underline{y}y_0\in\mathcal{X}^{\leqslant 0}$ is fixed, $F^{-1}(y)$ contains two elements. Denote by $x^+(y)$ the element for which $x_0=+1$ and $x^-(y)$ the other one. Observe that the points $y$ generic for the measure $\nu$ are those for which $d(x^+)=\epsilon$ or $d(x^-)=\epsilon$; denote by $G$ this set. 
The $g$-function may be defined in the classical way
$$
g(y)=\lim_{n\to\infty}\nu(y_0|y_{-n}^{-1})
$$
when the limit exists (it exists for $\nu$-almost every $y$ by the reverse Martingale theorem).
\begin{Claim}
	For all $y\in G$ with $y_0=+1$, the $g$-function is well defined and equals
\begin{equation*}
	g(y)=\left\{\begin{array}{ccc}
		\epsilon&\text{if}&d(x^-(y))=\epsilon\\
		1-\epsilon&\text{if}&d(x^+(y))=\epsilon.
	\end{array}\right.
\end{equation*}
\end{Claim}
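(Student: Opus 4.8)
The plan is to compute $g(y) = \lim_{n\to\infty}\nu(y_0 \mid y_{-n}^{-1})$ directly by pulling back through $F$ to the product measure $\mu$. Fix $y \in G$ with $y_0 = +1$, and recall that $F^{-1}(\{y_{-n}^{-1}\})$ consists of two cylinders in $\mathcal{X}$: one whose coordinate at $-n-1$ is $+1$ and one whose coordinate at $-n-1$ is $-1$ (the two preimages differ only by a global sign flip to the left of position $-n-1$, but on a finite window they are pinned down by the value at the left endpoint). Concretely, since $F(x)_i = x_{i-1}x_i$, knowing $y_{-n}^{-1}$ and the value $x_{-n-1} \in \{+1,-1\}$ determines $x_{-n}^{-1}$ uniquely, and there are exactly two choices for $x_{-n-1}$. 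The first step is therefore to write
\[
\nu(y_{-n}^{-1}) = \mu\bigl(F^{-1}[y_{-n}^{-1}]\bigr) = \mu\bigl(x_{-n-1}=+1,\ x_{-n}^{-1}=u^+\bigr) + \mu\bigl(x_{-n-1}=-1,\ x_{-n}^{-1}=u^-\bigr),
\]
where $u^\pm$ are the two corresponding words; and similarly for $\nu(y_{-n}^{0})$, where now additionally $x_0$ is forced by the choice of $x_{-1}$ (and $y_0 = +1$ means $x_{-1}x_0 = +1$, i.e. $x_0$ has the same sign as $x_{-1}$).

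The second step is to evaluate these $\mu$-probabilities using the product structure. Because $\mu$ is i.i.d. with $\mu(-1)=\epsilon$, $\mu(+1)=1-\epsilon$, the probability of a word of length $m$ with $j$ occurrences of $-1$ is $\epsilon^j(1-\epsilon)^{m-j}$. Thus each of the two terms in $\nu(y_{-n}^{-1})$ is, up to the prefactor $\mu(x_{-n-1}=\pm1)$, a product over the $n$ coordinates $x_{-n}^{-1}$ of the appropriate factor; the key observation is that flipping $x_{-n-1}$ flips \emph{all} of $x_{-n}^{-1}$ (telescoping: $x_i = x_{-n-1}\cdot \prod_{k=-n}^{i} y_k$), so the word $u^-$ is the sign-flip of $u^+$, hence has $n - (\#\{-1\text{'s in }u^+\})$ occurrences of $-1$. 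Writing $j_n := \#\{i \in [-n,-1] : x_i^+ = -1\}$ where $x^+ := x^+(y)$ is the preimage with $x_0 = +1$ (so $x_{-n}^{-1} = u^+$ corresponds to $x_{-n-1} = x^+_{-n-1}$), one gets explicit expressions: $\nu(y_{-n}^{-1})$ is a sum of two terms of the form $(\text{sign prefactor})\cdot\epsilon^{j_n}(1-\epsilon)^{n-j_n}$ and its flip.

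The third step is to form the ratio $\nu(y_{-n}^{0})/\nu(y_{-n}^{-1})$ and take $n \to \infty$, using the genericity hypothesis $y \in G$. Since $y \in G$ means $d(x^+) = \epsilon$ or $d(x^-) = \epsilon$ — equivalently $j_n/n \to \epsilon$ or $j_n/n \to 1-\epsilon$ — one of the two terms in numerator and denominator dominates exponentially: if $d(x^-(y)) = \epsilon$, then $j_n/n \to 1-\epsilon$ so the branch through $x^-$ (which carries the $\epsilon^{n-j_n}(1-\epsilon)^{j_n}$-type weight, i.e. density $\epsilon$ of $-1$'s) dominates, and appending $y_0=+1$ corresponds on that branch to appending a coordinate $x_0 = x_{-1}$; tracking the sign of $x_{-1}$ on the dominating branch and using the marginal, the limiting conditional probability of $y_0 = +1$ comes out to $\epsilon$. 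Symmetrically, if $d(x^+(y)) = \epsilon$ the dominating branch is the one through $x^+$ and the answer is $1-\epsilon$. (On $G$ the two cases are mutually exclusive since $\epsilon \neq 1/2$, so there is no ambiguity.) Finally, one checks that $y \in G$ forces the limit to exist — the reverse martingale theorem gives existence $\nu$-a.e., and since $\nu(G) = 1$ the formula holds on a full-measure set; for the stated claim one verifies the computation is valid for \emph{every} $y \in G$, which the explicit ratio computation provides.

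**Main obstacle.** The delicate point is the bookkeeping of signs: correctly identifying, for each finite window and each of the two preimage branches, which value $x_{-n-1}$ and hence $x_0$ is being conditioned on, and making sure that "appending $y_0 = +1$" translates to the right event on each branch. The telescoping identity $x_i = x_{-n-1}\prod_{k=-n}^{i} y_k$ is what makes this manageable, but one has to be careful that $d(x^+)$ and $d(x^-)$ are computed along the \emph{fixed} infinite preimages, whereas the finite-window counts $j_n$ must be matched to the correct branch; conflating the two would give the wrong dominating term. Once the dominating branch is pinned down, extracting the limit is a short computation with geometric-type ratios.
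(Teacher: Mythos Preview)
Your proposal is correct and follows essentially the same route as the paper: pull back the cylinders $[y_{-n}^{-1}]$ and $[y_{-n}^{0}]$ through $F$ to two $\mu$-cylinders each, evaluate them via the product structure (the paper counts $+1$'s in $x^+$ via $S_n$, you count $-1$'s via $j_n$), form the ratio, and let the genericity hypothesis pick out the dominating branch. The paper packages the ratio as $(1-\epsilon)\dfrac{\rho+\rho^{2(n+1)(1/2 - S_n/(n+1))}}{1+\rho^{2(n+1)(1/2 - S_n/(n+1))}}$ with $\rho=\epsilon/(1-\epsilon)$, which is exactly your ``one branch dominates exponentially'' observation made explicit; your sign bookkeeping via the telescoping $x_i = x_{-n-1}\prod_{k=-n}^{i}y_k$ is the same device.
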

\begin{proof}
	Fix $y_{-n}^{0}$ with $y_0=+1$. Denote by $S_n$ the number of $+1$'s in $x^+(y)_{-1},\dots,x^+(y)_{-n-1}$, this is also the number of $+1$'s in $y_0,y_0y_{-1},\dots,y_0y_{-1}\dots y_{-n}$ (these are products, not strings). Then, a straightforward computation gives
	\[
	\nu(y_{-n}^0)=\epsilon^{1+S_n}(1-\epsilon)^{n+1-S_n}+\epsilon^{n+1-S_n}\epsilon^{1+S_n},
	\]
	and, as $y_0=+1$,
	\[
	\nu(y_{-n}^{-1})=\epsilon^{S_n}(1-\epsilon)^{n+1-S_n}+\epsilon^{n+1-S_n}\epsilon^{S_n}
	\]
	thus entailing, with $\rho=\frac{\epsilon}{1-\epsilon}$,
	\[
	\nu(y_0|y_{-n}^{-1})=(1-\epsilon)\frac{\rho+\rho^{2(n+1)\left(\frac12-\frac{S_n}{n+1}\right)}}{1+\rho^{2(n+1)\left(\frac12-\frac{S_n}{n+1}\right)}}.
	\]
	
	Suppose for example $\epsilon<\frac12$ or equivalently $\rho>1$. If $d(x^+(y))=\epsilon$, then the law of large numbers implies $\frac{S_n}{n+1}\to\epsilon$ and for $n$ large, $\frac12-\frac{S_n}{n+1}>0$, consequently $\nu(y_0|y_{-n}^{-1})\to 1-\epsilon$. If $d(x^-(y))=\epsilon$, then $\frac{S_n}{n+1}\to 1-\epsilon$ and for $n$ large, $\frac12-\frac{S_n}{n+1}<0$, consequently $\nu(y_0|y_{-n}^{-1})\to \epsilon$.
\end{proof}
On $\mathcal{X}^{\leqslant 0}\setminus G$, $g$ may be arbitrarily defined, as it a set of $\nu$ mesure zero.
\begin{Claim}
	The function $g$ is everywhere essentially discontinuous.
\end{Claim}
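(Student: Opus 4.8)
The plan is to show that on the set $G$ (of full $\nu$-measure), the value $g(y)$ depends only on which of $d(x^+(y))$, $d(x^-(y))$ equals $\epsilon$, and that in every cylinder $[y_{-n}^{0}]$ both possibilities occur with positive $\nu$-measure; so no modification of $g$ on a $\nu$-null set can make it continuous at any point. First I would record, using the previous Claim and the analogous computation for $y_0=-1$, that for $y\in G$ one has $g(y)=1-\epsilon$ if $d(x^{y_0}(y))=\epsilon$ and $g(y)=\epsilon$ if $d(x^{-y_0}(y))=\epsilon$ (when $\epsilon<1/2$; the roles swap when $\epsilon>1/2$). The key structural fact is that the two preimages $x^+(y)$ and $x^-(y)$ are "complementary from the left": flipping $x_0$ flips every $x_{-i}$ simultaneously, so if $d(x^+(y))=\epsilon$ then $d(x^-(y))=1-\epsilon\ne\epsilon$, and vice versa. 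Hence on $G$ exactly one of the two densities equals $\epsilon$, and $g$ takes exactly the two values $\epsilon$ and $1-\epsilon$, each on a set of positive $\nu$-measure inside every nonempty cylinder.

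Next I would fix an arbitrary $y\in\mathcal X^{\le 0}$ and an arbitrary $n\ge 1$, and exhibit, inside the cylinder $[y_{-n}^{0}]$, two points $z,z'\in G$ with $g(z)=\epsilon$ and $g(z')=1-\epsilon$. Concretely: choose a long block of symbols to append to the left of $y_{-n}^0$ so that the corresponding preimage under $F$ (reading off partial products) has asymptotic density of $-1$'s equal to $\epsilon$ when $z_0$ is read as $+1$; this is possible because the density is an asymptotic quantity, insensitive to the finitely many prescribed coordinates $y_{-n}^0$, and by the law of large numbers a $\mu$-generic continuation achieves density $\epsilon$. The complementary continuation then gives density $1-\epsilon$ for the other preimage, i.e. $g=\epsilon$ there; perturbing the tail differently gives the other value. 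Thus in any neighbourhood of any point, $g$ (restricted to $G$) attains both values $\epsilon$ and $1-\epsilon$, each on a set of positive $\nu$-measure. Since essential discontinuity at $y$ means precisely that for some $\delta>0$, every neighbourhood of $y$ contains a positive-$\nu$-measure set on which $g$ differs from $g(y)$ by at least $\delta$ (equivalently, no $\nu$-a.e.-modification of $g$ is continuous at $y$), this establishes essential discontinuity at every $y$.

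The main obstacle I anticipate is the bookkeeping in the "flip" relation between $x^+(y)$ and $x^-(y)$: one must verify carefully that changing $x_0$ from $+1$ to $-1$ indeed negates all the left coordinates $x_{-i}$ (this follows from $x_{i-1}=F(x)_i/x_i = y_i/x_i$ read recursively, so the sign of $x_{-i}$ is $y_0 y_{-1}\cdots y_{-i}$ times a fixed sign depending on $x_0$), and consequently that the two densities are $\epsilon$ and $1-\epsilon$ — using here crucially that $\epsilon\ne 1/2$, so these two values are distinct and the essential discontinuity gap is $\delta=|1-2\epsilon|>0$. The remaining steps — that a prescribed finite prefix does not affect asymptotic density, and that $\nu$-generic points of each type fill a positive-measure subset of each cylinder — are routine consequences of the law of large numbers and the construction $\nu=\mu\circ F^{-1}$ with $\mu$ a nondegenerate product measure.
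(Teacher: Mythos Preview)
Your proposal is correct and follows essentially the same route as the paper: in each cylinder $[y_{-n}^0]$ you partition $G$ according to whether $d(x^+(\cdot))=\epsilon$ or $d(x^-(\cdot))=\epsilon$, observe that $g$ equals $1-\epsilon$ on one piece and $\epsilon$ on the other, and conclude with gap $\delta=|1-2\epsilon|$. The only difference is cosmetic: where you speak of ``exhibiting points'' and invoking the law of large numbers to get positive $\nu$-measure, the paper computes directly $\nu(V_1)=\mu\bigl([(x^+(y))_{-n-1}^{0}]\bigr)>0$ and $\nu(V_2)=\mu\bigl([(x^-(y))_{-n-1}^{0}]\bigr)>0$ via the pushforward $\nu=\mu\circ F^{-1}$, which is cleaner and avoids the detour through individual generic points.
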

\begin{proof}
	Actually we prove that: $\forall y\in\mathcal{X}^{\leqslant 0}$, $\exists\delta>0, \forall n\in\NN$
	\[
	\exists V_1,V_2\subset [y_{-n}^0], \nu(V_1)>0, \nu(V_2)>0, \forall z_1\in V_1, \forall z_2\in V_2,\left|g(z_1)-g(z_2)\right|\geqslant\delta.
	\]
	This implies the discontinuity of $g$ at $y$. Moreover, if $f$ is such that $\nu(f\neq g)=0$, then the claim remains true for $f$, with $V_i\cap\{f=g\}$ and $y$ is also a discontinuity point of $f$. So the discontinuities are actually essential. 
	
	Consider $V_1=\{z\in G, z_{-n}^0=y_{-n}^0, d(x^+(z))=\epsilon\}$ and $V_2=\{z\in G, z_{-n}^0=y_{-n}^0, d(x^+(z))=1-\epsilon\}$; Firstly, $V_1\cup V_2=[y_{-n}^0]\cap G$. Secondly, $\nu(V_1)=\mu([(x^+(y))_{-n-1}^{0}])>0$ and $\nu(V_2)=\mu([(x^-(y))_{-n-1}^{0}])>0$. In addition, $g_{|V_1}=1-\epsilon$ and $g_{|V_2}=\epsilon$ giving the claim with $\delta=1-2\epsilon$.
\end{proof}

\subsection{Random context representation}

There exists an interesting characterisation, which was given in \cite{imbuzeiro/2015}, of compatible stationary measures that do not charge the set of discontinuities of $g$. Start with a stationary measure $\mu$ compatible with $g$. Then, the set of discontinuities of $g$ has $\mu$-measure $0$ if and only if the corresponding process admits a \emph{random context representation}. This means that there exist, for any $k\geqslant1$, measurable functions $\lambda_k: A^{\{-k,\ldots,-1\}}\rightarrow [0,1]$ and  $p_k: A\times A^{\{-k,\ldots,-1\}}\rightarrow [0,1]$
such that 
\begin{align*}
&\sum_k\lambda_k(x_{-k}^{-1})=1\,\,\,\mu{\text-a.e.}\,\,\underline x,\\
&\sum_{a\in A} p_k(a|x_{-k}^{-1})=1\,\,\,\forall x_{-k}^{-1},k\geqslant1\\
&g(\underline xa)=\sum_{k\geqslant1}\lambda_k(x_{-k}^{-1})p_k(a|x_{-k}^{-1})\,\,\,\mu{\text-a.e.}\,\,\underline x.\\
\end{align*}
This characterisation reminds us the  classical result of \cite{kalikow/1990} characterising  continuous $g$-measures in terms of convex mixture of $k$-steps Markov kernels, that is, the same as above, but holding for \emph{any} $\underline x$ instead of $\mu$-a.e. $\underline x$. 

On the one hand, we can combine this with our Theorem \ref{theo:exist} to obtain the following statement: for a discontinuous $g$-function $g$, if $\bar\mu^{\underline x}$ does not charge $\mathcal{D}_g$ for some $\underline x$, then $\bar\mu^{\underline x}$ is a stationary compatible measure, and it admits a random context representation. This may have implications for inference/estimation problems, and we refer to \cite{imbuzeiro/2015} on that part. On the other hand, it allows us to restate our Theorem \ref{theo:exist} as follows: ``if for some $\underline x$ we have that $\bar\mu^{\underline x}$ has a random context representation, then it is compatible with $g$.'' We are not sure whether this assumption is simpler to check however...

\subsection{Relation to (non-)Gibbsianity}

Without getting into details, let us only mention that the so-called \emph{Gibbs measures} of statistical mechanics are specified by strongly positive and continuous conditional probabilities with respect to both past and future \citep{kozlov/1974}. The function specifying these conditional probabilities is simply called \emph{specification}. It is  the two-sided counterpart of the $g$-function, since the later  specifies the conditional probabilities with respect to the past only. 
The question of non-Gibbsianity is the problem of finding the best framework (larger than that of Gibbs measures) to describe physical phenomena, allowing to include measures having discontinuous specifications. We refer to the encyclopedic review of \cite*{van1993regularity} for formal definitions and exhaustive description of the related issues. 

It is natural to ask whether both, continuous $g$-measures and Gibbs measure coincide. Are $g$-measures with continuous $g$-function always Gibbsian, and reciprocally, do Gibbs measures always have continuous $g$-function? Recently, this relation was completely clarified: neither continuity of the $g$-function implies Gibbsianity of the measure \citep{fernandez/gallo/maillard/2011} nor Gibbsianity of the measure implies it has a continuous $g$-function \citep{bissacot2018entropic}. The former used the renewal process with suitably chosen parameters as a counterexample. The later proved that the (Gibbs) measures specified by the model of \cite{dyson1969existence} at low temperature have discontinuous $g$-functions. 

There are still many open questions in this context, the interested reader is prompt to consult \cite{berghout2019relation} and the above references. 
 
\section{Proofs}\label{sec:proofs}

Many statements refer  to a  measure $\bar\mu^{\underline x}\in\mathcal M$ (see \eqref{eq:def_barmu}), defined through  a past $\underline x$ and a diverging subsequence of the natural numbers $m_k,k\geqslant1$. In the proofs, $\underline x$ and $m_k,k\geqslant1$ may be considered fixed once for all. 

\begin{proof}[Proof of Theorem \ref{theo:exist}] Assume the conditions of Theorem \ref{theo:exist}.
It is direct, by its definition as a Ces\`aro limit, that $\bar\mu^{\underline{x}}$ is stationary. Indeed, consider any measurable set $B\in\mathcal{F}$, then
\begin{align*}
\bar\mu^{\underline{x}} (T^{-1}B)&:=\lim_k\frac{1}{m_k}\sum_{i=0}^{m_k-1}\mu^{\underline{x},-i}(T^{-1}B)\\
&=\lim_k\frac{1}{m_k}\sum_{i=1}^{m_k}\mu^{\underline{x},-i}(B)\\
&=\lim_k\frac{1}{m_k}\left(\mu^{\underline{x},-m_k}(B)-\mu^{\underline{x},0}(B)+\sum_{i=0}^{m_k-1}\mu^{\underline{x},-i}(B)\right)\\
&=\bar\mu^{\underline{x}}(B).
\end{align*} 
So it remains to prove that it is compatible in the sense of \eqref{eq:compatible}. 

Since the measure $\bar\mu^{\underline{x}}$ is translation invariant, it is univocally defined by its restriction on $\mathcal{X}^{-}$, and we will make no difference in the notation between $\bar\mu^{\underline{x}}$ and its restriction on pasts. By the reverse Martingale theorem, there exists a set $S\subset \mathcal{X}^{-}$ of full $\bar\mu^{\underline{x}}$-measure for which 
\[
\EE_{\bar\mu^{\underline{x}}}(\1_{[a]_0}|\mathcal{F}^{[-l,-1]})(y)\stackrel{l\rightarrow\infty}{\rightarrow}\EE_{\bar\mu^{\underline{x}}}(\1_{[a]_0}|\mathcal{F}^{-})(y)
\]
for any $y$ such that $\underline y\in S$.
{We will consider this convergence on the smaller set $\mathcal{D}_g^c\cap S\cap\mathcal{R}$ where $\mathcal{R}\subset\mathcal{X}^-$ denotes the set of pasts $\underline y$ satisfying $\bar\mu^{\underline x}(y_{-n}^{-1})>0$ for any $n\geqslant1$. We can prove that $\bar\mu^{\underline x}(\mathcal{R})=1$. Indeed, if we let $m(\underline y):=\inf\{n\geqslant1:\bar\mu^{\underline x}(y_{-n}^{-1})=0\}$, we have 
\[
\bar\mu^{\underline x}(\mathcal{R}^c)=\sum_{i\geqslant1}\bar\mu^{\underline x}(\{\underline y:m(\underline y)=i\})=\sum_{i\geqslant1}\sum_{y_{-i}^{-1}:m(y_{-i}^{-1})=i}\bar\mu^{\underline x}(y_{-i}^{-1})=0.
\]
This means in particular that, under the assumption of the theorem, $\bar\mu^{\underline x}(\mathcal{D}_g^c\cap S\cap\mathcal{R})=1$.}
 
For any $y$ such that $\underline y\in\mathcal{D}_g^c\cap S\cap\mathcal{R}$ we can write $\EE_{\bar\mu^{\underline{x}}}(\1_{[a]_{0}}|\mathcal{F}^{[-l,-1]})(y)={\bar\mu^{\underline{x}}(y_{-l}^{-1}a)}/{\bar\mu^{\underline{x}}(y_{-l}^{-1})}$. 
So on this set we know that ${\bar\mu^{\underline{x}}(y_{-l}^{-1}a)}/{\bar\mu^{\underline{x}}(y_{-l}^{-1})}\stackrel{l\rightarrow\infty}{\rightarrow}\EE_{\bar\mu^{\underline{x}}}(\1_{[a]_{0}}|\mathcal{F}^{-})(y)$  and we will now prove that
\[
\frac{\bar\mu^{\underline{x}}(y_{-l}^{-1}a)}{\bar\mu^{\underline{x}}(y_{-l}^{-1})}\rightarrow g(\underline{y}a).
\] 
For $j\geqslant l$, 
\begin{align}\label{eq:mu1}
\mu^{\underline{x},-j}(y_{-l}^{-1}a)&=\sum_{z_{-j}^{-l-1}}\mu^{\underline{x},-j}(z_{-j}^{-l-1}y_{-l}^{-1})g(\underline xz_{-j}^{-l-1}y_{-l}^{-1}a).
\end{align}
Thus for any fixed $l\geqslant1$ and sufficiently large $k$'s
\begin{align}\label{eq:mu1c}
\bar\mu^{\underline{x},-m_k}(y_{-l}^{-1}a)&=\frac{1}{m_k}\left[\sum_{j=l}^{m_k}\sum_{z_{-j}^{-l-1}}\mu^{\underline{x},-j}(z_{-j}^{-l-1}y_{-l}^{-1})g(\underline xz_{-j}^{-l-1}y_{-l}^{-1}a)+o(m_k)\right].
\end{align}
Recall
\[
\text{var}^g_l(\underline{y}):=\sup_{z:z_{-l}^{-1}=y^{-1}_{-l}}|g(\underline za)-g(\underline{y}a)|,
\]
with this notation, we have  for any $z_{-j}^{-l-1}$
\begin{equation}\label{eq:mu2}
g(\underline{y}a)-\text{var}^g_l(\underline{y})\leqslant g(\underline xz_{-j}^{-l-1}y_{-l}^{-1}a)\leqslant g(\underline{y}a)+\text{var}^g_l(\underline{y}),
\end{equation}
and thus \eqref{eq:mu1c} can be bounded as follows
\begin{align}\label{eq:mu1c2}
\bar\mu^{\underline{x},-m_k}(y_{-l}^{-1}a)&\stackrel{\le}{\geqslant}\frac{1}{m_k}\left[\left(g(\underline{y}a)\stackrel{+}{-}\text{var}^g_l(\underline{y})\right)\sum_{j=l}^{m_k}\mu^{\underline{x},-j}(y_{-l}^{-1})+o(m_k)\right].
\end{align}
Taking now the limit in $k$ we obtain
\begin{equation*}
\bar\mu^{\underline{x}}(y_{-l}^{-1})\left(g(\underline{y}a)-\text{var}^g_l(\underline{y})\right)\le\bar\mu^{\underline{x}}(y_{-l}^{-1}a)\leqslant \bar\mu^{\underline{x}}(y_{-l}^{-1})\left(g(\underline{y}a)+\text{var}^g_l(\underline{y})\right).
\end{equation*}
Finally, recalling that $\underline{y}\in \mathcal{D}_g^c\cap S\cap\mathcal{R}$  we have that $\text{var}^g_l(\underline{y})\stackrel{l\rightarrow\infty}{\rightarrow}0$
\[
\frac{\bar\mu^{\underline{x}}(y_{-l}^{-1}a)}{\bar\mu^{\underline{x}}(y_{-l}^{-1})}\rightarrow g(\underline{y}a).
\]
Since $\mathcal{D}_g^c\cap S\cap\mathcal{R}$ has full $\bar\mu^{\underline{x}}$-measure, we have
\[
\EE_{\bar\mu^{\underline{x}}}(\1_{[a]_{0}}|\mathcal{F}^{-})(y)=g(\underline{y}a)\,,\,\,\,\text{$\bar\mu^{\underline{x}}$-a.s}
\]
concluding the proof.
\end{proof}

Before we come to the proof of  Corollary \ref{cor1}, let us state and prove a simple lemma.
\begin{Lem}\label{lemma:proof_claim}
Suppose $g\geqslant\epsilon$. Then for any $k\in\mathbb Z,n\geqslant1$ and any $x_{k+1}^{k+n}\in A^n$
\begin{equation}\label{eq:claim}
\epsilon^n\le\bar\mu^{\underline{x}}(x_{k+1}^{k+n})\le(1-\epsilon)^n.
\end{equation}
\end{Lem}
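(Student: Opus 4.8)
The plan is to prove the bound \eqref{eq:claim} first for the approximating measures $\bar\mu^{\underline{x},-m_k}$ and then pass to the limit in $k$, using the fact that $\bar\mu^{\underline{x}}$ is a weak (subsequential) limit and cylinders are clopen. The key observation is that each $\mu^{\underline{x},-i}$, restricted to a cylinder depending only on coordinates in $\{k+1,\dots,k+n\}$, is built by chaining $n$ applications of $g$, each of which lies in $[\epsilon,1-\epsilon]$ thanks to $g\geqslant\epsilon$ (hence also $g\leqslant 1-(|A|-1)\epsilon\leqslant 1-\epsilon$ since there are at least two letters).

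More concretely, I would first record that, for any $i\geqslant 0$ and any string $a_{k+1}^{k+n}$,
\[
\mu^{\underline{x},-i}(a_{k+1}^{k+n}) = \mu^{\underline{x}}\big(T^i \in [a_{k+1}^{k+n}]\big) = \sum_{w} \mu^{\underline{x}}(w)\, g_n(\underline{x} w\, a_{k+1}^{k+n}),
\]
where, depending on whether $k+i \geqslant 0$ or not, the sum ranges over the relevant finite strings $w$ filling the coordinates between the ``seed'' $\underline{x}$ (together with the already-generated symbols) and position $k+i+1$; in the degenerate case where the cylinder is entirely determined by $\underline{x}$ itself the bound is immediate from the definition of $g_n$ as a product of $g$-values and \eqref{eq:ionescu}. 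In every case the factor $g_n(\cdots)=\prod_{j=1}^n g(\cdots a_{k+1}^{k+j})$ is a product of $n$ numbers each in $[\epsilon, 1-\epsilon]$, so $\epsilon^n \leqslant g_n(\cdots)\leqslant (1-\epsilon)^n$, and since the $\mu^{\underline{x}}(w)$ sum to $\mu^{\underline{x},-i}([a_{k+1}^{k+n}]$-projection onto the conditioning coordinates$)\leqslant 1$ — and in fact the relevant normalisation makes it exactly the right total mass — we get $\epsilon^n \leqslant \mu^{\underline{x},-i}(a_{k+1}^{k+n}) \leqslant (1-\epsilon)^n$ for every $i$. By homogeneity of $g$ the index $k$ plays no role, so I would just prove it for $k=0$ or for $k$ such that $k+1>$ the last seed coordinate and invoke the $g$-action extension to $\mathcal{X}^{\leqslant i}$.

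Averaging over $i=0,\dots,m_k-1$ preserves the two-sided bound, so $\epsilon^n \leqslant \bar\mu^{\underline{x},-m_k}(a_{k+1}^{k+n}) \leqslant (1-\epsilon)^n$ for all $k$. Finally, $[a_{k+1}^{k+n}]$ is a cylinder, hence both open and closed in the product topology, so its indicator is a bounded continuous function; weak convergence $\bar\mu^{\underline{x},-m_k}\to\bar\mu^{\underline{x}}$ along the chosen subsequence then gives $\bar\mu^{\underline{x},-m_k}(a_{k+1}^{k+n})\to\bar\mu^{\underline{x}}(a_{k+1}^{k+n})$, and the inequalities pass to the limit, yielding \eqref{eq:claim}.

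I expect the only mild obstacle to be notational bookkeeping of the ranges of summation: one must be a little careful because, depending on the sign of $k+i$, the cylinder $[a_{k+1}^{k+n}]_{\text{shifted}}$ may fall partly or wholly inside the deterministic ``past'' region $(-\infty,-1]$ on which $\mu^{\underline{x}}$ is a Dirac mass, or entirely in the region $[0,\infty)$ generated by iterating $g$. In the first, degenerate sub-case the bound still holds because $g_n$ is a genuine product of $n$ values of $g$ (by the definition of $g_n$ and the compatibility relation \eqref{eq:ionescu}), each pinned in $[\epsilon,1-\epsilon]$; I would handle this by noting that in all cases $\mu^{\underline{x},-i}(a_{k+1}^{k+n})$ is a convex combination (over the conditioning coordinates) of products of exactly $n$ values of $g$, which makes the two-sided estimate transparent and uniform in $i$.
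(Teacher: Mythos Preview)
Your proposal is correct and follows essentially the same route as the paper: bound each factor of $g_n$ in $[\epsilon,1-\epsilon]$, use the decomposition of $\mu^{\underline{x},-j}(a_{k+1}^{k+n})$ as a convex combination of such products, average, and pass to the limit. The paper's proof is terser---it restricts to $j\geqslant(-k)\vee 0$ (so the cylinder sits entirely in the $g$-generated region) and leaves the weak-limit step implicit---whereas you spell out the clopen-cylinder argument and worry about the degenerate Dirac sub-case, but the substance is identical.
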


\begin{proof}
For any $j\geqslant -k\vee0$
\begin{align*}
\mu^{\underline{x},-j}(x_{k+1}^{k+n})&=\sum_{z_{-j}^{k}}\mu^{\underline{x},-j}(z_{-j}^{k})g_n(\underline xz_{-j}^{k}x_{k+1}^{k+n})\geqslant \epsilon^n\sum_{z_{-j}^{k-1}}\mu^{\underline{x},-j}(z_{-j}^{k})\geqslant\epsilon^n
\end{align*}
which implies also that $\bar\mu^{\underline{x},-m_k}(x_{k+1}^{k+n})\geqslant\epsilon^n$ since it is a Ces\`aro mean of $\mu^{\underline{x},-j}(x_{k+1}^{k+n})$'s. Since $g\geqslant \epsilon$ we also necessarily have $g\leqslant 1-\epsilon$, and applying the same reasoning and reversing the inequality, we conclude the proof. 
\end{proof}

 \begin{proof}[Proof of Corollary \ref{cor1}]
It is enough to prove that $\bar\mu^{\underline{x}}$ is non-atomic, which is direct using Lemma \ref{lemma:proof_claim}: $\bar\mu^{\underline{x}}(\underline{y})=\lim_n\bar\mu^{\underline{x}}(y_{-n}^{-1})\le\lim_n(1-\epsilon)^n=0$.
 \end{proof}

\begin{proof}[Proof of Corollary \ref{prop1}] 
We have, for any $\underline x$ and  $j\geqslant n$
\begin{align*}
\sum_{y_{-n}^{-1}\in \mathcal{D}_g^n}\mu^{\underline{x},-j}(y_{-n}^{-1})&=\sum_{y_{-n}^{-1}\in \mathcal{D}_g^n}\sum_{z_{-j}^{-n-1}}\mu^{\underline{x},-j}(z_{-j}^{-n-1})g_n(\underline xz_{-j}^{-n-1}y_{-n}^{-1})\\
&=\sum_{z_{-j}^{-n-1}}\mu^{\underline{x},-j}(z_{-j}^{-n-1})\sum_{y_{-n}^{-1}\in \mathcal{D}_g^n}g_n(\underline xz_{-j}^{-n-1}y_{-n}^{-1}).
\end{align*}
According to the definition of $P_g(\mathcal{D}_g)$, for any $\delta$,  we have for sufficiently large $n$
\begin{align*}
\sum_{y_{-n}^{-1}\in \mathcal{D}_{g}^n}g_n(\underline xz_{-j}^{-n-1}y_{-n}^{-1})\le
\sum_{y_{-n}^{-1}\in \mathcal{D}_{g}^n}\sup_{\underline x}g_n(\underline xz_{-j}^{-n-1}y_{-n}^{-1})\le\left(e^{P_g(\mathcal{D}_g)+\delta}\right)^n.
\end{align*}
Therefore we have for sufficiently large $j$'s
\[
\sum_{y_{-n}^{-1}\in \mathcal{D}_g^{n}}\mu^{\underline{x},-j}(y_{-n}^{-1})\leqslant \left(e^{P_g(\mathcal{D}_g)+\delta}\right)^n.
\]
But then, the same holds under $\bar\mu^{\underline{x}}$ since
\begin{align*}
\sum_{y_{-n}^{-1}\in  \mathcal{D}_{g}^n}\bar\mu^{\underline{x}}(y_{-n}^{-1})&=\sum_{y_{-n}^{-1}\in  \mathcal{D}_{g}^n}\lim_k\frac{1}{m_k}\sum_{j=0}^{m_k-1}\mu^{\underline{x},-j}(y_{-n}^{-1})\\
&=\lim_k\frac{1}{m_k}\sum_{j=0}^{m_k-1}\sum_{y_{-n}^{-1}\in  \mathcal{D}_{g}^n}\mu^{\underline{x},-j}(y_{-n}^{-1})\\
&\le\lim_k\frac{1}{m_k}\left[n+(m_k-n)\left(e^{P_g(\mathcal{D}_g)+\delta}\right)^n\right]\\
&=\left(e^{P_g(\mathcal{D}_g)+\delta}\right)^n.
\end{align*}
Under the assumption of the corollary we know that $P_g(\mathcal{D}_g)<0$,  therefore, choosing $\delta<-P_g(\mathcal{D}_g)$ we conclude
\begin{align*}
\bar\mu^{\underline{x}}(\mathcal{D}_g)=\bar\mu^{\underline{x}}\left(\bigcap_{n\geqslant1}\{y_{-n}^{-1}\in  \mathcal{D}_{g}^n\}\right)=\lim_n\sum_{y_{-n}^{-1}\in  \mathcal{D}_{g}^n}\bar\mu^{\underline{x}}(y_{-n}^{-1})=\lim_n\left(e^{P_g(\mathcal{D}_g)+\delta}\right)^n=0.
\end{align*}
\end{proof}

\begin{proof}[Proof of Corollary \ref{cor2}]
We  prove the result in the case where the string $v$ has size $1$, that is $v\in A$ is a symbol. The extension to the case of string of more symbols is straightforward. Observe that for any $n\geqslant1$, $\mathcal{D}_g^n=(A\setminus\{v\})^n$ is the set of strings of size $n$ of symbols from $A\setminus\{v\}$. Recall also that we assume that there exists some $\epsilon>0$ such that $\inf_{\underline x}g(\underline xv)=\epsilon$. 

 Proceeding exactly as in the proof of Corollary \ref{prop1}, it is enough to show that for some $\underline x$, $\sum_{y_{-n}^{-1}\in \mathcal{D}_{g}^n}g_n(\underline xz_{-j}^{-n-1}y_{-n}^{-1})$ vanishes as $n$ diverges uniformly in $z_{-j}^{-n-1}$. This is the case as the following computation shows
\begin{align*}
\sum_{y_{-n}^{-1}\in \mathcal{D}_g^n}g_n(\underline xz_{-j}^{-n-1}y_{-n}^{-1})&=
\sum_{y_{-n}^{-2}\in \mathcal{D}_g^{n-1}}g_{n-1}(xz_{-j}^{-n-1}y_{-n}^{-2})\sum_{y_{-1}\in A\setminus\{v\}}g(\underline xz_{-j}^{-n-1}y_{-n}^{-1})\\
&=\sum_{y_{-n}^{-2}\in \mathcal{D}_g^{n-1}}g_{n-1}(\underline xz_{-j}^{-n-1}y_{-n}^{-2})[1-g(\underline xz_{-j}^{-n-1}y_{-n}^{-2}v)]\\
&\leqslant (1-\epsilon)\sum_{y_{-n}^{-2}\in \mathcal{D}_g^{n-1}}g_{n-1}(\underline xz_{-j}^{-n-1}y_{-n}^{-2}),
\end{align*}
which we apply $n$ times to get
\begin{align*}
\sum_{y_{-n}^{-1}\in \mathcal{D}_g^{n}}g_n(\underline xz_{-j}^{-n-1}y_{-n}^{-1})\leqslant (1-\epsilon)^n.
\end{align*}

 \end{proof}

\begin{proof}[Proof of Theorem \ref{exist_uniq_VLMC}] The proof of the existence result is simple. First observe that $\bar\mu^{\underline{x}}(\mathcal{D}_g)\leqslant \bar\mu^{\underline{x}}(\ell=\infty)$ since $\mathcal{D}_g\subset\{\underline x:\ell(\underline x)=\infty\}$. Under our assumption, we therefore have $\bar\mu^{\underline{x}}(\ell=\infty)=0$, which by Theorem \ref{theo:exist} implies existence. 

For the proof of the uniqueness result,  observe that for a context tree with context length function $\ell$, we have for any past $\underline y$
\[
\text{var}^g_n(\underline y)\leqslant \1_{(n,\infty)}(\ell(\underline y)).
\]
\cite{johansson/oberg/2003} proved that, if $\inf g>0$ and if for some $g$-measure $\mu$ we have 
\[
\EE_\mu(\sum_n(\text{var}^g_n)^2)<\infty
\]
then it is the unique $g$-measure. Then, the proof of the proposition follows using the monotone convergence theorem:
\begin{align*}
\EE_{\bar\mu^{\underline{x}}}(\sum_n(\text{var}^g_n)^2)&=\sum_{n\geqslant0}\EE_{\bar\mu^{\underline{x}}}(\text{var}^g_n)^2 \le\sum_{n\geqslant0}\EE_{\bar\mu^{\underline{x}}}\1_{(n,\infty)}(\ell)=\sum_{n\geqslant0}\bar\mu^{\underline{x}}(\ell>n)=\EE_{\bar\mu^{\underline{x}}}\ell<\infty.
\end{align*}
\end{proof}

\begin{proof}[Proof of Theorem  \ref{theo:unifo}]
Observe that
\begin{align*}
\bar\mu^{\underline{x}}(\ell=\infty)&=\lim_i\bar\mu^{\underline{x}}(\ell>i)\\
&=\lim_i\lim_k\frac{1}{m_k}\sum_{j=0}^{m_k-1}\mu^{\underline{x},-j}(\{z_0^{\infty}:\ell(z_0^{i-1})>i\})\\
&=\lim_i\lim_k\frac{1}{m_k}\sum_{j=0}^{m_k-1}\sum_{y_{-j}^{-1}}\mu^{\underline{x},-j}(y_{-j}^{-1})\mu^{\underline{x}y_{-j}^{-1}}(\{z_0^{\infty}:\ell(z_0^{i-1})>i\})\\
&\leqslant \lim_i\lim_k\frac{1}{m_k}\sum_{j=0}^{m_k-1}\sup_{\underline x}\mu^{\underline{x}}(\{z_0^{\infty}:\ell(z_0^{i-1})>i\}).
\end{align*}
This converges to $0$ under our assumption, implying that $\bar\mu^{\underline{x}}(\ell=\infty)=0$ and, by Theorem \ref{exist_uniq_VLMC}, that existence is granted. 

The proofs of uniqueness and   weak-Bernoullicity are very similar to the proofs given by \cite{gallesco/gallo/takahashi/2018}. However, due to substancial differences in the assumptions (essentially because we do not assume $\inf g>0$ but only $g>0$) we thought it was easier and clearer for the reader if we give the complete proof here. 

Let   $\mathcal F^+:=\mathcal F^{[0,+\infty]}$. The proofs of uniqueness and weak-Bernoullicity will be done  proving that, (a) for any two stationary measures $\mu$ and $\nu$ compatible with $g$, we have $\mu|_{\mathcal F^+}\ll\nu|_{\mathcal F^+}$ and (b) for any pair of pasts $\underline x,\underline y\in\XX^-$ we have $\mu^{\underline x}|_{\mathcal F^+}\ll\mu^{\underline y}|_{\mathcal F^+}$. 
We then use Lemma \ref{lem:uniq} (stated and proved below) to conclude uniqueness from (a), and we invoke the implication (iii)$\Rightarrow$(i) of \cite[Corollary 2.8]{tong/handel/2014} to conclude weak-Bernoullicity from (b). 

So it only remains to prove the absolute continuity affirmations (a) and (b). We do this using a very general theorem of \cite{jacod/shiryaev/2002} that we state after some further definitions. First, for any two measures $\mu,\nu\in\mathcal M$, and  $z \in \XX$, let $Z_n(z) := \frac{d\mu|_{\F^{[0,n]}}}{d\nu|_{\F^{[0,n]}}}(z)$ and $\alpha_n(z) := Z_n(z)/Z_{n-1}(z)$. Finally, let
\begin{equation*}
 d_n(z) := E_{\nu}\big[(1-\sqrt{\alpha_n})^2 | \F^{[0, n-1]}\big](z). 
\end{equation*}
Our tool to prove absolute continuity is the following theorem.
\begin{Theo}[see \cite{jacod/shiryaev/2002}, Theorem 2.36, p.253]\label{theo:jacob}
If for all $n \geqslant 0$ we have $\mu |_{\F^{[0,n]}} \ll \nu |_{\F^{[0,n]}}$, then $\mu|_{\F^+} \ll \nu|_{\F^+}$ if and only if $\sum_{n=1}^\infty d_n < \infty$, $\mu$-a.s.
\end{Theo}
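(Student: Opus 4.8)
The statement is a classical absolute-continuity criterion, so the plan is to reprove it by the discrete-time Kakutani/Hellinger martingale method, working with the square root of the likelihood ratio. The starting point is the standard Lebesgue-decomposition dictionary: under the local domination hypothesis, $(Z_n)$ is a nonnegative $\nu$-martingale, hence $Z_n \to Z_\infty$ $\nu$-a.s., while a short Bayes computation gives $E_\mu[1/Z_n \mid \F^{[0,n-1]}] = 1/Z_{n-1}$, so $(1/Z_n)$ is a nonnegative $\mu$-martingale and under $\mu$ one has $Z_n \to \tilde Z \in (0,\infty]$ $\mu$-a.s. First I would record the equivalences $\mu|_{\F^+} \ll \nu|_{\F^+} \iff (Z_n)$ is $\nu$-uniformly integrable $\iff \mu(\tilde Z = \infty) = 0$ (the singular part of $\mu$ being carried by $\{\tilde Z=\infty\}$), which reduce the theorem to matching $\{\tilde Z = \infty\}$ with $\{\sum_n d_n = \infty\}$ up to $\mu$-null sets.

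The algebraic heart is the conditional Hellinger affinity. Since $(Z_n)$ is a $\nu$-martingale, $E_\nu[\alpha_n \mid \F^{[0,n-1]}] = 1$ on $\{Z_{n-1}>0\}$, so expanding $d_n = E_\nu[(1-\sqrt{\alpha_n})^2\mid \F^{[0,n-1]}]$ yields $E_\nu[\sqrt{\alpha_n}\mid\F^{[0,n-1]}] = 1 - d_n/2 \in [0,1]$. Writing $\Pi_n := \prod_{k=1}^n (1 - d_k/2)$, I would check that $N_n := \sqrt{Z_n}/\Pi_n$ is a nonnegative $\nu$-martingale and, via the Bayes formula, that $M_n := \sqrt{Z_0/Z_n}/\Pi_n$ is a nonnegative $\mu$-martingale (both reduce to $E_\nu[\sqrt{Z_n}\mid\F^{[0,n-1]}] = \sqrt{Z_{n-1}}\,(1-d_n/2)$). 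The point of introducing $\Pi_n$ is that it is nonincreasing in $[0,1]$ with $\Pi_\infty > 0 \iff \sum_n d_n < \infty$; the degenerate factors $d_k = 2$ force $Z_k = 0$ and hence have $\mu$-probability zero, so $\Pi_n > 0$ $\mu$-a.s.

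The ``only if'' direction is then immediate: a nonnegative $\mu$-martingale converges $\mu$-a.s. to a finite limit, so $M_n \to M_\infty < \infty$ $\mu$-a.s.; on $\{\sum_n d_n = \infty\}$ we have $\Pi_n \to 0$, whence $\sqrt{Z_0/Z_n} = M_n \Pi_n \to 0$, i.e. $\tilde Z = \infty$ $\mu$-a.s. there, so $\mu \ll \nu$ forces $\mu(\sum_n d_n = \infty) \le \mu(\tilde Z = \infty) = 0$. For the ``if'' direction I would localize to tame the randomness of $\Pi_n$: fix $c > 0$, let $\tau_c := \inf\{n : \Pi_{n+1} < c\}$, and use optional stopping for the $\nu$-martingale $Z$ together with $\Pi_{n\wedge\tau_c} \ge c$ to bound $E_\nu[N_{n\wedge\tau_c}^2] = E_\nu[Z_{n\wedge\tau_c}/\Pi_{n\wedge\tau_c}^2] \le c^{-2}E_\nu[Z_0] = c^{-2}$. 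An $L^2(\nu)$-bounded martingale is uniformly integrable, so $N_{n\wedge\tau_c}$, and hence $\sqrt{Z_{n\wedge\tau_c}} = N_{n\wedge\tau_c}\,\Pi_{n\wedge\tau_c}$ (the second factor being bounded and convergent), converges in $L^2(\nu)$; therefore $Z_{n\wedge\tau_c}$ is $\nu$-uniformly integrable, and the measure with density process $Z_{n\wedge\tau_c}$ is absolutely continuous with respect to $\nu$ on $\F^+$. Since this measure coincides with $\mu$ on $\{\tau_c = \infty\}\in\F^+$, one obtains $\mu(\,\cdot \cap \{\tau_c = \infty\}) \ll \nu$; letting $c \downarrow 0$, the events $\{\tau_c = \infty\} = \{\inf_n \Pi_n \ge c\}$ increase to $\{\Pi_\infty > 0\} = \{\sum_n d_n < \infty\}$, which has full $\mu$-measure by hypothesis, giving $\mu \ll \nu$.

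I expect the main obstacle to be precisely this ``if'' direction, because $\Pi_n$ is random (it is $\F^{[0,n-1]}$-measurable, not deterministic as in Kakutani's product-measure theorem), so the clean $L^2$-boundedness argument only survives after the stopping-time localization and the subsequent passage $c \downarrow 0$. Secondary care is needed to justify that the stopped and normalized processes genuinely converge in $L^2(\nu)$ (rather than merely being bounded) and to set up the Lebesgue-decomposition dictionary that converts the $\mu$-a.s. statement about $\sum_n d_n$ into a statement about absolute continuity on $\F^+$.
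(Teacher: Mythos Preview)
The paper does not give its own proof of this statement: it is quoted verbatim as a tool (``Our tool to prove absolute continuity is the following theorem'') with attribution to \cite{jacod/shiryaev/2002}, Theorem~2.36, and is used as a black box in the proof of Theorem~\ref{theo:unifo}. So there is no in-paper argument to compare against.

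That said, your sketch is a correct and essentially standard reconstruction of the Jacod--Shiryaev proof via the Hellinger/Kakutani martingale method. The key identity $E_\nu[\sqrt{\alpha_n}\mid\F^{[0,n-1]}]=1-d_n/2$ and the resulting $\nu$-martingale $\sqrt{Z_n}/\Pi_n$ (with $\Pi_n=\prod_{k\le n}(1-d_k/2)$) together with its $\mu$-side counterpart are exactly the objects that drive the cited proof. Your ``only if'' direction is clean; for the ``if'' direction, the localization by $\tau_c=\inf\{n:\Pi_{n+1}<c\}$ and the $L^2(\nu)$-bound $E_\nu[Z_{n\wedge\tau_c}/\Pi_{n\wedge\tau_c}^2]\le c^{-2}$ are the right moves, and the passage from $L^2$-convergence of $\sqrt{Z_{n\wedge\tau_c}}$ to uniform integrability of $Z_{n\wedge\tau_c}$ is valid. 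The one place to be slightly more careful in a full write-up is the identification of ``the measure with density process $Z_{n\wedge\tau_c}$'' with $\mu$ restricted to $\{\tau_c=\infty\}$: one should say explicitly that $\1_{\{\tau_c=\infty\}}Z_\infty$ is the $L^1(\nu)$-limit of $\1_{\{\tau_c>n\}}Z_n$ and serves as Radon--Nikodym derivative of $\mu(\,\cdot\cap\{\tau_c=\infty\})$ with respect to $\nu$ on $\F^+$. With that detail filled in, the argument is complete.
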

According to what we said before, for our proofs,  $\mu$ and $\nu$ are either two stationary measures compatible with $g$ or the measures $\mu^{\underline x}$ and $\mu^{\underline y}$ constructed using $g$ through \eqref{eq:ionescu}. In any case,  since we assume $g>0$, we have $\mu(z_0^n)>0$ and $\nu(z_0^n)>0$ for any $n\geqslant0$ and any $z_0^n\in A^{n+1}$, so that the assumption that  $\mu |_{\F^{[0,n]}} \ll \nu |_{\F^{[0,n]}},n\geqslant0$ actually holds in both situations. On the other hand, since $g$ is a probabilistic context tree, if $\ell^g(z_0^{n-1})\leqslant n$, we have $\mu(z_n|z_0^{n-1})=\nu(z_n|z_0^{n-1})=p^g(z_n|c_{\tau}(z_0^{n-1}))$ (see \eqref{eq:compatibleVLMC}). Thus
\begin{align*}
\alpha_n(z)&:=\frac{\mu(z_n|z_0^{n-1})}{\nu(z_n|z_0^{n-1})}\\&=\1_{[1,n]}(\ell^g(z_0^{n-1}))+\1_{(n,\infty)}(\ell^g(z_0^{n-1}))\frac{\mu(z_n|z_0^{n-1})}{\nu(z_n|z_0^{n-1})}\\&\geqslant\1_{[1,n]}(\ell^g(z_0^{n-1})).
\end{align*}
We get $(1-\sqrt{\alpha_n(z)})^2\leqslant \1_{(n,\infty)}(\ell^g(z_0^{n-1}))$.  Thus (we use  the notation $\ell^{g,i}(z):=\ell^g(z_{-\infty}^i)$ for any $i\in\mathbb Z$)
\begin{align*}
d_n(z)&:=\mathbb E_{\nu}((1-\sqrt{\alpha_n})^2|\F^{[0, n-1]})(z)\\
&\leqslant \mathbb E_{\nu}(\1_{(n,\infty)}(\ell^{g,n-1})|\F^{[0, n-1]})(z)\\
&=\1_{(n,\infty)}(\ell^{g,n-1}(z))\\
&=\1_{(n,\infty)}(\ell^g(z_0^{n-1}))
\end{align*}
where, in the penultimate line we used the fact that the event $\{\ell^g_{n-1}> n\}$ is $\F^{[0, n-1]}$-measurable.
When  $\mu=\mu^{\underline x}$ and $\nu=\mu^{\underline y}$, using the statement of Theorem \ref{theo:jacob} together with the assumption \eqref{eq:assumption-iii-us}, we conclude that $\mu^{\underline x}|_{\F^+} \ll \mu^{{\underline y}}|_{\F^+}$, proving the weak-Bernoullicity. The same reasoning would prove $\mu|_{\F^+} \ll \mu|_{\F^+}$ if \eqref{eq:assumption-iii-us} holds under any stationary  measure $\mu$ compatible with $g$, instead of $\mu^{\underline x}$. But since \eqref{eq:assumption-iii-us}  holds for any $\underline x$, it actually holds under $\mu$ as is seen integrating over all possible past $\underline x$.

\end{proof}
\begin{Lem}\label{lem:uniq}
Suppose that, for any two stationary measures $\mu$ and $\nu$ compatible with $g$ we have $\mu|_{\mathcal F^+}\ll\nu|_{\mathcal F^+}$. Then there exists actually only one stationary compatible measure.
\end{Lem}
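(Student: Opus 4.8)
The plan is to reduce to ergodic measures and then to separate two distinct ergodic compatible measures by an event measurable with respect to $\F^+:=\F^{[0,+\infty]}$ --- precisely the kind of event the hypothesis of the lemma forbids.

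First I would record that compatibility is stable under mixtures: if $\Pi$ is a probability measure on $\M_T(g)$ and $\mu=\int\rho\,d\Pi(\rho)$, then $\mu$ is stationary and, for every $a\in A$ and every $B\in\F^-$, integrating the identity $\int_B\1_{[a]_0}\,d\rho=\int_B g(\underline y a)\,d\rho(\underline y)$ against $\Pi$ and using that $g(\cdot\,a)$ is $\F^-$-measurable yields $\EE_\mu(\1_{[a]_0}\mid\F^-)=g(\cdot\,a)$ $\mu$-a.s., i.e.\ $\mu\in\M_T(g)$. In particular $\M_T(g)$ is convex. Invoking the structure theory of compatible measures --- $\M_T(g)$ is a Choquet simplex whose extreme points are exactly the ergodic elements of $\M_T(g)$, see \cite{fernandez/maillard/2005} --- it then follows that if $\M_T(g)$ had two distinct elements it would have two distinct \emph{ergodic} ones, since a simplex with a single extreme point reduces to that point.

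The core step is to rule out two distinct ergodic compatible measures $\rho_1\neq\rho_2$. Since a stationary measure on $\XX$ is determined by the numbers $\rho(w)$, $w$ ranging over finite words, there is a word $w$ with $\rho_1(w)\neq\rho_2(w)$. I would then consider
\[
E:=\Bigl\{x\in\XX:\ \tfrac1n\sum_{i=0}^{n-1}\1_{[w]_{i+|w|-1}}(x)\stackrel{n\to\infty}{\longrightarrow}\rho_1(w)\Bigr\}.
\]
This condition involves only the coordinates $x_0,x_1,\dots$, so $E\in\F^+$; and by Birkhoff's ergodic theorem together with ergodicity, $\rho_1(E)=1$ while $\rho_2(E)=0$. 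Hence $E\in\F^+$ is $\rho_2$-null but $\rho_1$-full, so $\rho_1|_{\F^+}\not\ll\rho_2|_{\F^+}$, contradicting the hypothesis applied to the pair $\rho_1,\rho_2\in\M_T(g)$. Therefore $\M_T(g)$ contains at most one element --- hence exactly one, the existence of a compatible measure being granted in the setting where the lemma is applied.

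I expect the main obstacle to be the reduction to the ergodic case. Convexity of $\M_T(g)$ is elementary, but passing from ``two distinct compatible measures'' to ``two distinct ergodic compatible measures'' relies on the fact that the ergodic components of a compatible measure are themselves compatible (equivalently, that $\M_T(g)$ is a simplex with ergodic extreme points). This is the $g$-measure counterpart of a classical fact for Gibbs measures; underneath it lies the observation that the invariant $\sigma$-algebra is contained, modulo $\mu$, in $\F^-$ (by the backward ergodic theorem), so that conditioning on $\F^-$ is compatible with the ergodic disintegration. The remaining ingredients --- the mixture computation and the Birkhoff separation --- are routine.
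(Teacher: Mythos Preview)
Your argument is correct and takes a genuinely different route from the paper's own proof. The paper restricts from $\F^+$ to the forward tail $\mathcal T^+:=\bigcap_{n\geqslant0}\F^{[n,+\infty]}$, uses weak-Bernoullicity (established separately in the proof of Theorem~\ref{theo:unifo} via \cite{tong/handel/2014}, though not stated in the lemma itself) to get tail-triviality of each compatible measure, deduces $\mu|_{\mathcal T^+}=\nu|_{\mathcal T^+}$, and then invokes a coupling result of \cite{thorisson/2000} to upgrade tail equality to equality on cylinders. Your approach instead passes to ergodic compatible measures via the simplex structure of $\M_T(g)$ from \cite{fernandez/maillard/2005} and separates two distinct ergodic ones by an explicit $\F^+$-measurable Birkhoff set. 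Your proof is more self-contained: it does not need weak-Bernoullicity to have been established first, nor the external coupling input from Thorisson, and thus proves the lemma exactly as stated. The paper's proof, by contrast, tacitly imports the weak-Bernoulli property from the surrounding argument; its advantage is that it avoids appealing to the ergodic decomposition of compatible measures (your ``main obstacle''), working instead with an arbitrary pair $\mu,\nu$ directly.
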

\begin{proof}
Fix $\mu$ and $\nu$ stationary compatible measures. First, observe that the tail sigma field $\mathcal T^+:=\cap_{n\geqslant0}F^{[n,+\infty]}$ belongs to $\mathcal F^+$. Thus, $\mu|_{\mathcal T^+}\ll \nu|_{\mathcal T^+}$ as well as $\nu|_{\mathcal T^+}\ll \mu|_{\mathcal T^+}$.   Since these measures are weak-Bernoulli,  they are trivial on $\mathcal T^+$ \citep{bradley/2005}, which in turn implies that $\mu|_{\mathcal T^+}= \nu|_{\mathcal T^+}$. Invoking now (c)$\Rightarrow$(b) in Theorem 9.4 of Chapter 4 in \cite{thorisson/2000}, $\mu|_{\mathcal T^+}= \nu|_{\mathcal T^+}$ implies $|\mu(T^nC)-\nu(T^nC)|\rightarrow0$ for any cylinder set $C\in\mathcal{F}^{+}$. But since $\mu$ and $\nu$ are stationary, this means that $\mu(C)=\nu(C)$, meaning that $\mu|_{\mathcal{F}^{+}}=\nu|_{\mathcal{F}^{+}}$. Once more, by stationarity, we conclude that  $\mu=\nu$. 
\end{proof}
\begin{proof}[Proof of Corollary \ref{prop:explicitVLMC}]
Consider a $g$-function $g=(\tau,p)$ and denote by $\ell$ the related length function. Observe that $\mathcal{D}_g^n\subset \tau^n$. It follows therefore that $\bar{gr}({\mathcal{D}})\le\bar{gr}({\tau})$, and owing to the discussion preceding Section \ref{sec:preceding}, we conclude that existence holds.  In order to prove the remaining statements, we will prove that the condition of the proposition implies that of the second statement of Theorem \ref{theo:unifo}. It is enough to prove that 
\[
\bar{gr}({\tau})<[1-(|A|-1)\varepsilon]^{-1}\Rightarrow\sum_{i\geqslant1}\sup_{\underline x}\mu^{\underline{x}}(\{y_0^{\infty}:\ell(y_{0}^{i-1})>i\})<\infty
\]
since the latter, by Borel-Cantelli, would imply that we are in force of the assumption of Theorem \ref{theo:unifo}.

Observe that, for  any $\alpha\in(0,1)$ there exists $N_\alpha$ s.t. for $i>N_\alpha$ we have $|\tau^i|\leqslant (1-(|A|-1)\varepsilon)^{-i(1-\alpha)}$. Thus
\begin{align*}
\mu^{\underline{x}}(\{y_0^{\infty}:\ell(y_{0}^{i-1})>i\})&=\sum_{y_0^{i-1}:\ell(y_0^{i-1})>i}\mu^{\underline{x}}(y_0^{i-1})\\&\le|\tau^i|(1-(|A|-1)\varepsilon)^i\\
&\le(1-(|A|-1)\varepsilon)^{-i(1-\alpha)}(1-(|A|-1)\varepsilon)^i\rightarrow0.\\
\end{align*}
Observe that this last quantity is independent of $x$ therefore
\[
\sum_{i\geqslant1}\sup_{\underline x}\mu^{\underline{x}}(\{y_0^{\infty}:\ell(y_{0}^{i-1})>i\})\leqslant N_\alpha+\sum_{i>N_\alpha}(1-(|A|-1)\varepsilon)^{i\alpha}<\infty.
\]

\end{proof}
\appendix

\section{Countability of  subtrees}

In this appendix we include the proofs of two {statements} concerning {the countability of subsets of $\mathcal X^-$, which we mentioned  in Subsection \ref{sec:preceding}.} In these two lemmas, we will use the generic name $\mathcal D$ for such subsets. By \emph{growth function} of a given $\mathcal D\subset\mathcal X^-$, we mean the sequence of natural numbers 
\[
d(n):=|\mathcal D^n|=|\{x_{-n}^{-1}:\underline x\in\mathcal{D}\}|,\,n\ge1.
\]
\begin{Lem}\label{lemma:tree1}
	For all $f:\NN\to\NN^*$ such that $\lim_{n\to\infty}f(n)=+\infty$, there exists an uncountable set $\mathcal D$ with growth function {$d(n)=f(n),n\ge1$.}
\end{Lem}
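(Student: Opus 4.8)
The plan is to build the set $\mathcal D$ (viewed as a set of pasts, i.e. left-infinite sequences, or equivalently as an infinite rooted tree whose $n$-th level has exactly $f(n)$ vertices) level by level, so that at each level we add exactly one new branching point. Since $f$ diverges, the sequence $f$ increases by at least $1$ infinitely often, so there are infinitely many levels $n$ at which a branch point can be placed, and this is precisely what is needed to get uncountably many infinite paths. Concretely, I would work on the binary alphabet (embedding into the general finite alphabet $A$ afterwards by relabelling $0,1$ as two fixed symbols of $A$); this is legitimate because the statement is purely about the combinatorial growth function and makes no reference to $g$.

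First I would reduce to the case where $f$ is nondecreasing: given an arbitrary divergent $f:\NN\to\NN^\star$, one can find a nondecreasing $\tilde f\le f$ with $\tilde f$ still divergent, build the tree for $\tilde f$, and then at each level $n$ artificially append $f(n)-\tilde f(n)$ extra ``dead'' vertices (finite branches that do not extend, i.e. leaves) so that the level count becomes exactly $f(n)$ without changing the set of infinite paths. Actually it is cleaner to handle the exact count directly: maintain inductively a finite set $V_n\subset A^{\{-n,\dots,-1\}}$ with $|V_n|=f(n)$, together with a distinguished subset $P_n\subseteq V_n$ of ``live'' strings that will be prolonged. To pass from level $n$ to level $n+1$: each live string in $P_n$ must have at least one extension in $V_{n+1}$; if $f(n+1)>f(n)$ we pick one live string and give it two extensions (a new branch point), giving all other live strings a single canonical extension (say append $0$), and the remaining $f(n+1)-f(n)-(\text{extra produced})$ slots are filled with fresh leaves or single extensions as needed; if $f(n+1)=f(n)$ we just extend every live string by one symbol. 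We keep $P_n$ infinite (in fact we can keep $|P_n|$ growing) so that infinitely many branch points are actually created along live paths. Set $\mathcal D^{n}:=V_n$ by construction, and let $\mathcal D$ be the set of all pasts $\underline x$ all of whose prefixes lie in the tree $\bigcup_n V_n$ (equivalently, the set of infinite paths through the live skeleton together with the finite dead ends, but only the infinite paths matter for cardinality).

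Finally I would verify the two required properties. That the growth function is exactly $f$ is immediate from the inductive maintenance of $|V_n|=f(n)$. For uncountability: the live skeleton contains infinitely many binary branch points situated along live paths, say at levels $n_1<n_2<\cdots$; following the standard argument, to each $\omega\in\{0,1\}^\NN$ one associates the infinite path that makes choice $\omega_j$ at the $j$-th branch point, and these paths are pairwise distinct, hence $|\mathcal D|\ge 2^{\aleph_0}$, so $\mathcal D$ is uncountable. (It is of course at most $|A^{-\NN^\star}|=2^{\aleph_0}$, giving equality, though the lemma only asks for uncountability.) The only mildly delicate point — and the part I would write out most carefully — is the bookkeeping in the inductive step guaranteeing simultaneously that $|V_{n+1}|=f(n+1)$ \emph{and} that enough live branch points keep being produced when $f$ increases very slowly (it may increase by $1$ only at a sparse set of levels); this is handled by noting that ``$f$ divergent'' forces infinitely many strict increases, so infinitely many branch points along live paths are forced, which is all the uncountability argument needs.
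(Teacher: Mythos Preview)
Your inductive level-by-level construction has a genuine gap in the uncountability step. You write ``if $f(n+1)>f(n)$ we pick one live string and give it two extensions'', but you never say \emph{which} live string is chosen. If, for instance, you always branch the lexicographically leftmost live string, the resulting live skeleton is a comb: one spine $\underline 0$ carries all the branch points, while every side branch created there never branches again. The set of infinite live paths is then countable, and your map $\omega\mapsto(\text{path making choice }\omega_j\text{ at the }j\text{-th branch point})$ is ill-defined, because after choosing $\omega_1=1$ the path has already left the spine and will never meet the second branch point. To rescue the argument you would need a scheduling rule guaranteeing that \emph{every} live lineage is branched infinitely often (e.g.\ a round-robin on the live strings), and then argue that every infinite path through the live skeleton meets infinitely many branch points. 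This is fixable but it is real work that your sketch omits. A secondary issue: the ``dead leaves'' you use for padding are not prefixes of any element of $\mathcal D\subset\mathcal X^-$, so they do not contribute to $d(n)$; you should extend every vertex of $V_n$ to some infinite path (single $0$-extension suffices) rather than leave dangling leaves.

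The paper sidesteps both problems with a single explicit formula: choose $n_1<n_2<\cdots$ with $f(n)\ge 2^k$ for $n\ge n_k$ and set
\[
\mathcal D_0:=\bigl\{\,B_{-\infty}^{-1}:B_{-i}\in\{0^{\,n_{i+1}-n_i},\,1^{\,n_{i+1}-n_i}\}\,\bigr\},
\]
which is in explicit bijection with $\{0,1\}^{-\NN^\star}$ (hence uncountable) and has growth $d_0(n)=2^k\le f(n)$ on $[n_k,n_{k+1})$. The padding from $d_0$ up to $f$ is then done by \emph{adding} infinite sequences, which can only enlarge $\mathcal D_0$. The key difference is that in the paper all $2^{k-1}$ live strings bifurcate simultaneously at level $n_k$, so the embedded binary tree is visible at once; your one-branch-per-increase scheme needs an extra fairness argument to recover it.
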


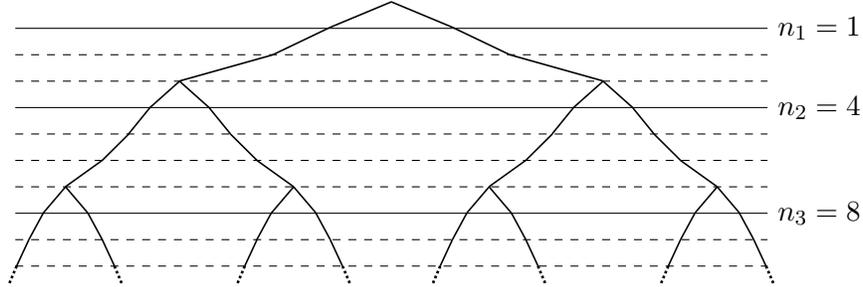
\begin{figure}[ht]
\begin{center}
\begin{tikzpicture}[scale=0.5]
\tikzset{edge from parent/.append style={very thick}}
\tikzset{every leaf node/.style={draw=none,circle=none},every internal node/.style={draw,circle,fill,scale=0.01}}
\tikzset{level distance=20pt}
\Tree [.{} 
	[.{} 
		[.{} 
			[.{} 
				[.{} 
					[.{} 
						[.{} 
							[.{} 
								[.{} 
									[.{} [.{} \edge[line width=2pt,dotted];\node[fill=none,draw=none]{};\edge[line width=2pt,draw=none];\node[fill=none,draw=none]{}; ] \edge[line width=2pt,draw=none];\node[fill=none,draw=none]{}; ] 
									\edge[line width=2pt,draw=none];\node[fill=none,draw=none]{}; 
								] 
								[.{} 
									\edge[line width=2pt,draw=none];\node[fill=none,draw=none]{}; 
									[.{} \edge[line width=2pt,draw=none];\node[fill=none,draw=none]{}; [.{} \edge[line width=2pt,draw=none];\node[fill=none,draw=none]{};\edge[line width=2pt,dotted];\node[fill=none,draw=none]{}; ] ] 
								] 
							] 
							\edge[line width=2pt,draw=none];\node[fill=none,draw=none]{}; 
						] 
						\edge[line width=2pt,draw=none];\node[fill=none,draw=none]{}; 
					] 
					\edge[line width=2pt,draw=none];\node[fill=none,draw=none]{}; 
				] 
				[.{} 
					\edge[line width=2pt,draw=none];\node[fill=none,draw=none]{}; 
					[.{} 
						\edge[line width=2pt,draw=none];\node[fill=none,draw=none]{}; 
						[.{} 
							\edge[line width=2pt,draw=none];\node[fill=none,draw=none]{}; 
							[.{} 
								[.{} 
									[.{} [.{} \edge[line width=2pt,dotted];\node[fill=none,draw=none]{};\edge[line width=2pt,draw=none];\node[fill=none,draw=none]{}; ] \edge[line width=2pt,draw=none];\node[fill=none,draw=none]{}; ] 
									\edge[line width=2pt,draw=none];\node[fill=none,draw=none]{}; 
								] 
								[.{} 
									\edge[line width=2pt,draw=none];\node[fill=none,draw=none]{}; 
									[.{} \edge[line width=2pt,draw=none];\node[fill=none,draw=none]{}; [.{} \edge[line width=2pt,draw=none];\node[fill=none,draw=none]{};\edge[line width=2pt,dotted];\node[fill=none,draw=none]{}; ] ] 
								] 
							] 
						] 
					] 
				] 
			] 
			\edge[line width=2pt,draw=none];\node[fill=none,draw=none]{}; 
		] 
		\edge[line width=2pt,draw=none];\node[fill=none,draw=none]{}; 
	] 
	[.{} 
		\edge[line width=2pt,draw=none];\node[fill=none,draw=none]{}; 
		[.{} 
			\edge[line width=2pt,draw=none];\node[fill=none,draw=none]{}; 
			[.{} 
				[.{} 
					[.{} 
						[.{} 
							[.{} 
								[.{} 
									[.{} [.{} \edge[line width=2pt,dotted];\node[fill=none,draw=none]{};\edge[line width=2pt,draw=none];\node[fill=none,draw=none]{}; ] \edge[line width=2pt,draw=none];\node[fill=none,draw=none]{}; ] 
									\edge[line width=2pt,draw=none];\node[fill=none,draw=none]{}; 
								] 
								[.{} 
									\edge[line width=2pt,draw=none];\node[fill=none,draw=none]{}; 
									[.{} \edge[line width=2pt,draw=none];\node[fill=none,draw=none]{}; [.{} \edge[line width=2pt,draw=none];\node[fill=none,draw=none]{};\edge[line width=2pt,dotted];\node[fill=none,draw=none]{}; ] ] 
								] 
							] 
							\edge[line width=2pt,draw=none];\node[fill=none,draw=none]{}; 
						] 
						\edge[line width=2pt,draw=none];\node[fill=none,draw=none]{}; 
					] 
					\edge[line width=2pt,draw=none];\node[fill=none,draw=none]{}; 
				] 
				[.{} 
					\edge[line width=2pt,draw=none];\node[fill=none,draw=none]{}; 
					[.{} 
						\edge[line width=2pt,draw=none];\node[fill=none,draw=none]{}; 
						[.{} 
							\edge[line width=2pt,draw=none];\node[fill=none,draw=none]{}; 
							[.{} 
								[.{} 
									[.{} [.{} \edge[line width=2pt,dotted];\node[fill=none,draw=none]{};\edge[line width=2pt,draw=none];\node[fill=none,draw=none]{}; ] \edge[line width=2pt,draw=none];\node[fill=none,draw=none]{}; ] 
									\edge[line width=2pt,draw=none];\node[fill=none,draw=none]{}; 
								] 
								[.{} 
									\edge[line width=2pt,draw=none];\node[fill=none,draw=none]{}; 
									[.{} \edge[line width=2pt,draw=none];\node[fill=none,draw=none]{}; [.{} \edge[line width=2pt,draw=none];\node[fill=none,draw=none]{};\edge[line width=2pt,dotted];\node[fill=none,draw=none]{}; ] ] 
								] 
							] 
						] 
					] 
				] 
			] 
		] 
	] 
      ]
\draw (-10,-20pt)--++(20,0) node[right]{$n_1=1$};
\draw[dashed] (-10,-40pt)--++(20,0);
\draw[dashed] (-10,-60pt)--++(20,0);
\draw (-10,-80pt)--++(20,0) node[right]{$n_2=4$};
\draw[dashed] (-10,-100pt)--++(20,0);
\draw[dashed] (-10,-120pt)--++(20,0);
\draw[dashed] (-10,-140pt)--++(20,0);
\draw (-10,-160pt)--++(20,0) node[right]{$n_3=8$};
\draw[dashed] (-10,-180pt)--++(20,0);
\draw[dashed] (-10,-200pt)--++(20,0);
\end{tikzpicture}
\end{center}
\caption{Illustration of the proof of Lemma~\ref{lemma:tree1}}.\label{fig:tree1}
\end{figure}

\begin{proof}
	{Fix a diverging sequence $f(n),n\ge1$}. Let $n_k$ be a strictly increasing sequence  {defined as follows: for any $k\ge1$, $n_k$ is such that $f(n)\geqslant 2^k$ if $n\geqslant n_k$. Now take a set
		\[
		\mathcal{D}:=\{B_{-\infty}^{-1}:B_{-i}\in\{0^{n_{i+1}-n_i},1^{n_{i+1}-n_i}\},i\geqslant1\}
		\]
		in which $B_{-\infty}^{-1}$ describes the concatenation of infinitely many finite binary strings $B_{-i}$'s.  This set is illustrated in Figure~\ref{fig:tree1}. By definition, it is in bijection with $\{0,1\}^{-\mathbb N^\star}$, thus uncountable}.  Moreover, it has the property that $d(n)=2^k$ for $n_k\leqslant n<n_{k+1}$, thus $d(n)\leqslant f(n)$. So naturally, we can modify the set by adding elements to $\mathcal D$ so that $d(n)= f(n)$  keeping uncountability. 
\end{proof}

\begin{figure}[ht]
\begin{center}
\begin{tikzpicture}[scale=0.25]
\tikzset{edge from parent/.append style={very thick}}
\tikzset{every leaf node/.style={draw=none,circle=none},every internal node/.style={draw,circle,fill,scale=0.01}}
\tikzset{level distance=40pt}
\Tree [.{} 
			[.{} 
				[.{} 
					[.{} [.{} [.{} [.{} [.{} \edge[line width=2pt,dotted];\node[fill=none,draw=none]{}; \edge[line width=2pt,draw=none];\node[fill=none,draw=none]{}; ] \edge[line width=2pt,draw=none];\node[fill=none,draw=none]{}; ] \edge[line width=2pt,draw=none];\node[fill=none,draw=none]{}; ] \edge[line width=2pt,draw=none];\node[fill=none,draw=none]{}; ] \edge[line width=2pt,draw=none];\node[fill=none,draw=none]{};
					] 
					\edge[line width=2pt,draw=none];\node[fill=none,draw=none]{}; 
				] 
			[.{} [.{} [.{} [.{} [.{} [.{} \edge[line width=2pt,dotted];\node[fill=none,draw=none]{}; \edge[line width=2pt,draw=none];\node[fill=none,draw=none]{}; ] \edge[line width=2pt,draw=none];\node[fill=none,draw=none]{}; ] \edge[line width=2pt,draw=none];\node[fill=none,draw=none]{}; ] \edge[line width=2pt,draw=none];\node[fill=none,draw=none]{}; ] \edge[line width=2pt,draw=none];\node[fill=none,draw=none]{}; ] \edge[line width=2pt,draw=none];\node[fill=none,draw=none]{};
			] 
	  		] 
	[.{} 
		[.{} 
			[.{} 
				[.{} [.{} [.{} [.{} \edge[line width=2pt,dotted];\node[fill=none,draw=none]{}; \edge[line width=2pt,draw=none];\node[fill=none,draw=none]{}; ] \edge[line width=2pt,draw=none];\node[fill=none,draw=none]{}; ] \edge[line width=2pt,draw=none];\node[fill=none,draw=none]{}; ] \edge[line width=2pt,draw=none];\node[fill=none,draw=none]{}; ] 
				[.{} [.{} [.{} [.{} \edge[line width=2pt,dotted];\node[fill=none,draw=none]{}; \edge[line width=2pt,draw=none];\node[fill=none,draw=none]{}; ] \edge[line width=2pt,draw=none];\node[fill=none,draw=none]{}; ] \edge[line width=2pt,draw=none];\node[fill=none,draw=none]{}; ] \edge[line width=2pt,draw=none];\node[fill=none,draw=none]{}; ] 
			]
		  	[.{} 
		  		[.{} [.{} [.{} [.{} \edge[line width=2pt,dotted];\node[fill=none,draw=none]{}; \edge[line width=2pt,draw=none];\node[fill=none,draw=none]{}; ] \edge[line width=2pt,draw=none];\node[fill=none,draw=none]{}; ] \edge[line width=2pt,draw=none];\node[fill=none,draw=none]{}; ] \edge[line width=2pt,draw=none];\node[fill=none,draw=none]{}; ]
		  		[.{} [.{} [.{} [.{} \edge[line width=2pt,dotted];\node[fill=none,draw=none]{}; \edge[line width=2pt,draw=none];\node[fill=none,draw=none]{}; ] \edge[line width=2pt,draw=none];\node[fill=none,draw=none]{}; ] \edge[line width=2pt,draw=none];\node[fill=none,draw=none]{}; ] \edge[line width=2pt,draw=none];\node[fill=none,draw=none]{}; ] 
		  	]
	    ]
	    [.{} [.{} 
			[.{} [.{} 
					[.{} [.{} \edge[line width=2pt,dotted];\node[fill=none,draw=none]{}; \edge[line width=2pt,draw=none];\node[fill=none,draw=none]{}; ] [.{} \edge[line width=2pt,dotted];\node[fill=none,draw=none]{}; \edge[line width=2pt,draw=none];\node[fill=none,draw=none]{}; ] ] 
					[.{} [.{} \edge[line width=2pt,dotted];\node[fill=none,draw=none]{}; \edge[line width=2pt,draw=none];\node[fill=none,draw=none]{}; ] [.{} \edge[line width=2pt,dotted];\node[fill=none,draw=none]{}; \edge[line width=2pt,draw=none];\node[fill=none,draw=none]{}; ] ] 
				 ]
			     [.{} 
			     	[.{} [.{} \edge[line width=2pt,dotted];\node[fill=none,draw=none]{}; \edge[line width=2pt,draw=none];\node[fill=none,draw=none]{}; ] [.{} \edge[line width=2pt,dotted];\node[fill=none,draw=none]{}; \edge[line width=2pt,draw=none];\node[fill=none,draw=none]{}; ] ] 
			     	[.{} [.{} \edge[line width=2pt,dotted];\node[fill=none,draw=none]{}; \edge[line width=2pt,draw=none];\node[fill=none,draw=none]{}; ] [.{} \edge[line width=2pt,dotted];\node[fill=none,draw=none]{}; \edge[line width=2pt,draw=none];\node[fill=none,draw=none]{}; ] ] 
			     ] 
			] 
			[.{} 
				[.{} 
					[.{} [.{} \edge[line width=2pt,dotted];\node[fill=none,draw=none]{}; \edge[line width=2pt,draw=none];\node[fill=none,draw=none]{}; ] [.{} \edge[line width=2pt,dotted];\node[fill=none,draw=none]{}; \edge[line width=2pt,draw=none];\node[fill=none,draw=none]{}; ] ] 
					[.{} [.{} \edge[line width=2pt,dotted];\node[fill=none,draw=none]{}; \edge[line width=2pt,draw=none];\node[fill=none,draw=none]{}; ] [.{} \edge[line width=2pt,dotted];\node[fill=none,draw=none]{}; \edge[line width=2pt,draw=none];\node[fill=none,draw=none]{}; ] ] 
				]
			    [.{} 
			    	[.{} [.{} \edge[line width=2pt,dotted];\node[fill=none,draw=none]{}; \edge[line width=2pt,draw=none];\node[fill=none,draw=none]{}; ] [.{} \edge[line width=2pt,dotted];\node[fill=none,draw=none]{}; \edge[line width=2pt,draw=none];\node[fill=none,draw=none]{}; ] ] 
			    	[.{} [.{} \edge[line width=2pt,dotted];\node[fill=none,draw=none]{}; \edge[line width=2pt,draw=none];\node[fill=none,draw=none]{}; ] [.{} \edge[line width=2pt,dotted];\node[fill=none,draw=none]{}; \edge[line width=2pt,draw=none];\node[fill=none,draw=none]{}; ] ] 
			    ]
			] 
		  ]
		  [.{} 
			[.{} [.{} [.{} [.{} \edge[line width=2pt,dotted];\node[fill=none,draw=none]{};\edge[line width=2pt,dotted];\node[fill=none,draw=none]{}; ] [.{} \edge[line width=2pt,dotted];\node[fill=none,draw=none]{};\edge[line width=2pt,dotted];\node[fill=none,draw=none]{}; ] ] [.{}  [.{} \edge[line width=2pt,dotted];\node[fill=none,draw=none]{};\edge[line width=2pt,dotted];\node[fill=none,draw=none]{}; ] [.{} \edge[line width=2pt,dotted];\node[fill=none,draw=none]{};\edge[line width=2pt,dotted];\node[fill=none,draw=none]{}; ] ] ]
			     [.{} [.{} [.{} \edge[line width=2pt,dotted];\node[fill=none,draw=none]{};\edge[line width=2pt,dotted];\node[fill=none,draw=none]{}; ] [.{} \edge[line width=2pt,dotted];\node[fill=none,draw=none]{};\edge[line width=2pt,dotted];\node[fill=none,draw=none]{}; ] ] [.{} [.{} \edge[line width=2pt,dotted];\node[fill=none,draw=none]{};\edge[line width=2pt,dotted];\node[fill=none,draw=none]{}; ] [.{} \edge[line width=2pt,dotted];\node[fill=none,draw=none]{};\edge[line width=2pt,dotted];\node[fill=none,draw=none]{}; ] ] ] 
			] 
			[.{} [.{} [.{} [.{} \edge[line width=2pt,dotted];\node[fill=none,draw=none]{};\edge[line width=2pt,dotted];\node[fill=none,draw=none]{}; ] [.{} \edge[line width=2pt,dotted];\node[fill=none,draw=none]{};\edge[line width=2pt,dotted];\node[fill=none,draw=none]{}; ] ] [.{} [.{} \edge[line width=2pt,dotted];\node[fill=none,draw=none]{};\edge[line width=2pt,dotted];\node[fill=none,draw=none]{}; ] [.{} \edge[line width=2pt,dotted];\node[fill=none,draw=none]{};\edge[line width=2pt,dotted];\node[fill=none,draw=none]{}; ] ] ]
			     [.{} [.{} [.{} \edge[line width=2pt,dotted];\node[fill=none,draw=none]{};\edge[line width=2pt,dotted];\node[fill=none,draw=none]{}; ] [.{} \edge[line width=2pt,dotted];\node[fill=none,draw=none]{};\edge[line width=2pt,dotted];\node[fill=none,draw=none]{}; ] ] [.{} [.{} \edge[line width=2pt,dotted];\node[fill=none,draw=none]{};\edge[line width=2pt,dotted];\node[fill=none,draw=none]{}; ] [.{} \edge[line width=2pt,dotted];\node[fill=none,draw=none]{};\edge[line width=2pt,dotted];\node[fill=none,draw=none]{}; ] ] ] 
			] 
		  ]
	     ]
        ]
      ]
\draw[dashed] (-12,-40pt)--++(43,0);
\draw (-12,-80pt)--++(43,0) node[right]{$n_1=2$};
\draw[dashed] (-12,-120pt)--++(43,0);
\draw (-12,-160pt)--++(43,0) node[right]{$n_2=4$};
\draw[dashed] (-12,-200pt)--++(43,0);
\draw[dashed] (-12,-240pt)--++(43,0);
\draw (-12,-280pt)--++(43,0) node[right]{$n_3=7$};
\end{tikzpicture}
\end{center}
\caption{Illustration of the proof of Lemma~\ref{lemma:tree2}} with $d(n)=2^{n-k}+\sum_{i=1}^k2^{n_i-i}$ for $n_k\leqslant n<n_{k+1}$.
\[
\begin{array}{ccl}
	d(0)&=&2^{0-0}=1\\
	d(1)&=&2^{1-0}=2\\
	d(2)&=&2^{2-0}=4\\
	d(3)&=&2^{3-1}+2^{2-1}=6\\
	d(4)&=&2^{4-1}+2^{2-1}=10\\
	d(5)&=&2^{5-2}+2^{2-1}+2^{4-2}=14\\
	d(6)&=&2^{6-2}+2^{2-1}+2^{4-2}=22\\
	d(7)&=&2^{7-2}+2^{2-1}+2^{4-2}=38\ldots
\end{array}
\]\label{fig:tree2}
\end{figure}

\begin{Lem}\label{lemma:tree2}
	For all $f:\NN\to\NN^*$ such that $f(n)=o(2^n)$, there exists a countable set $\mathcal D$ with growth function $d(n)=f(n),n\ge1$.
\end{Lem}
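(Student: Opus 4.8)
The plan is to realise $\mathcal D$ as the set of infinite branches of a pruned binary tree $\mathcal T\subseteq\bigcup_n\{0,1\}^n$ — a node at depth $n$ being a string $x_{-n}^{-1}$, with parent obtained by erasing the leftmost letter — built level by level so that $|\mathcal T\cap\{0,1\}^n|=f(n)$ for all $n$ and so that the branch set $[\mathcal T]=\mathcal D$ is countable. The only intrinsic constraint on $\mathcal T$ is that every node has one or two children (so that it extends to an infinite branch, i.e.\ to a genuine element of $\mathcal X^-$), which forces $f$ to be nondecreasing with $f(n+1)\le 2f(n)$; we tacitly assume this admissibility. I would build $\mathcal T$ out of a single \emph{trunk} ray together with a sequence of \emph{bushes} hanging off it, each bush being a \emph{finite} complete binary tree whose leaves are then prolonged by single rays. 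Since a finite bush carries only finitely many rays, $\mathcal D$ will be the union of the trunk and countably many rays, hence countable; the whole point is to arrange the bushes so that the level widths hit $f$ exactly.

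Concretely, using $f(n)=o(2^n)$, I would first fix a rapidly increasing sequence of \emph{freezing levels} $n_1<n_2<\cdots$ — this is precisely the room that $f(n)=o(2^n)$ buys us. Then $\mathcal T$ is built in phases: during the $k$-th phase a single complete binary subtree, rooted at the live node inherited from the previous phase, is allowed to branch (possibly only \emph{partially}, to keep the width in step with $f$), so that the width at level $n$ equals the size of this live subtree at level $n$ plus the number of ray-nodes produced in earlier phases; at the freezing level $n_k$ all but one of the leaves of the live subtree are \emph{frozen} (each prolonged by its own ray forever) and the surviving leaf becomes the live node of the next phase. Matching $f$ on the nose is then a matter of (a) placing the $n_k$ so the width ``jumps'' fall in the right places, (b) choosing at each step how many live nodes actually branch, and (c) if a small discrepancy survives, adjusting the depths of a few bushes, exactly as extra elements are added in the proof of Lemma~\ref{lemma:tree1}. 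This is essentially the construction depicted in Figure~\ref{fig:tree2}, whose width between consecutive freezing levels is of the form $2^{n-k}+\sum_{i=1}^{k}2^{n_i-i}$.

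It then remains to check that $\mathcal T$ is pruned and that $[\mathcal T]$ is countable. The first is immediate: a frozen leaf is prolonged by a ray, and the live leaf of each phase is the root of the next phase's subtree, so no node is a dead end. For the second, an infinite branch of $\mathcal T$ either eventually meets a frozen leaf — in which case it is eventually the ray attached to that leaf, and there is exactly one such ray per frozen leaf, a countable total — or it passes through the live node at \emph{every} freezing level $n_k$; but only one leaf survives each freezing, so this ``never frozen'' branch is unique. Hence $\mathcal D=[\mathcal T]$ is a countable union of infinite branches, which is what we wanted.

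The step I expect to be the genuine obstacle is steps (a) and (b) above: one must run the level-by-level width bookkeeping so that $d(n)=f(n)$ while simultaneously freezing \emph{enough} of the tree at the levels $n_k$ that the live part never hides a full infinite binary subtree — for if one kept two or more live leaves per phase indefinitely, the surviving branches would form an uncountable Cantor set and $\mathcal D$ would fail to be countable. Balancing ``hit $f$ exactly'' against ``freeze aggressively enough to stay countable'' is exactly where the hypothesis $f(n)=o(2^n)$ has to be spent, and making this trade-off go through for an arbitrary admissible $f$ (rather than for the transparent case where $f$ has infinitely many slowly growing steps) is the only delicate part of the argument; everything else is the routine verification above.
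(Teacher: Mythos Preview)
Your construction is essentially the paper's: a trunk, with finite bushes attached at freezing levels $n_k$ (chosen via $f(n)=o(2^n)$) whose leaves are prolonged by rays, yielding a countable $\mathcal D$ with width $2^{n-k}+\sum_{i=1}^{k}2^{n_i-i}$ between freezing levels --- exactly the picture of Figure~\ref{fig:tree2}. The only minor difference is that the paper overshoots deliberately (obtaining $d(n)\geqslant f(n)$) and then simply \emph{withdraws} elements to hit $f$ exactly, whereas you try to track $f$ during the construction; the paper's two-step approach sidesteps the bookkeeping you flag as delicate, since removing rays from a countable set trivially preserves countability.
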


\begin{proof}
	Let $n_k,{k\ge0}$ be a strictly increasing sequence defined as follows: $n_0=0$ and for any $k\ge1$, $f(n)\leqslant 2^{n-k}$ for $n\geqslant n_k$. 
	Consider the set 
	\[
	\mathcal D:=\cup_{k\ge0}\{\underline 0w01^k:w\in A^{n_{k+1}-n_k}\}.
	\]
	This set is illustrated in Figure~\ref{fig:tree2}.
	Clearly, by definition, this set is countable. Moreover,  $d(n)\geqslant 2^{n-k}$ if $n_k\leqslant n<n_{k+1}$ (more precisely $d(n)=2^{n-k}+\sum_{i=1}^k2^{n_i-i}$) thus $d(n)\geqslant f(n)$. Once again, we can modify the set by withdrawing elements of $\mathcal D$ so that $d(n)= f(n)$, keeping countability. 
	
\end{proof}

\appendix
\noindent{\bf Ackowledgement} We thank Sacha Friedli, Daniel Y. Takahashi, Arnaud Le Ny, Roberto Fern\'andez and  Leandro Cioletti for many interesting discussion on $g$-measures, and Noam Berger for  suggesting the example of Subsection \ref{sec:non-exist-positive}. We would also like to mention that part of the proof of Theorem \ref{theo:exist} is inspired on a proof we found in an unpublished note by Sacha Friedli (that is no more available on the web). 

SG was supported by FAPESP  (BPE: 2017/07084-6) and CNPq (PQ 312315/2015-5 and  Universal  462064/2014-0). This work is part of the PhD thesis of RFF, supported by FAPESP Fellowship (2016/12918-0) and partially by FAPESP post-doctoral fellowship (2018/25076-5). This paper also was produced as part of the activities of FAPESP Research, Innovation and Dissemination Center for Neuromathematics (2013/07688-0).

\bibliographystyle{jtbnew} 
\bibliography{paccaut}

\begin{thebibliography}{42}
\expandafter\ifx\csname natexlab\endcsname\relax\def\natexlab#1{#1}\fi
\expandafter\ifx\csname url\endcsname\relax
  \def\url#1{\texttt{#1}}\fi
\expandafter\ifx\csname urlprefix\endcsname\relax\def\urlprefix{URL }\fi
\providecommand{\selectlanguage}[1]{\relax}

\bibitem[{Belloni \emph{et~al.}(2017)Belloni, Oliveira
  \emph{et~al.}}]{belloni/imbuzeiro/2017}
\textsc{Belloni, A., Oliveira, R.~I.} \emph{et~al.} (2017).
\newblock Approximate group context tree.
\newblock \emph{The Annals of Statistics} \textbf{45}(1), 355--385.

\bibitem[{Berghout \emph{et~al.}(2019)Berghout, Fern\'andez \&
  Verbitskiy}]{berghout2019relation}
\textsc{Berghout, S., Fern\'andez, R. \& Verbitskiy, E.} (2019).
\newblock On the relation between gibbs and $ g $-measures.
\newblock \emph{Ergodic Theory and Dynamical Systems} \textbf{39}(12),
  3224--3249.

\bibitem[{Bissacot \emph{et~al.}(2018)Bissacot, Endo, van Enter \&
  Le~Ny}]{bissacot2018entropic}
\textsc{Bissacot, R., Endo, E.~O., van Enter, A.~C. \& Le~Ny, A.} (2018).
\newblock Entropic repulsion and lack of the g-measure property for dyson
  models.
\newblock \emph{Communications in Mathematical Physics} \textbf{363}(3),
  767--788.

\bibitem[{Bradley(2005)}]{bradley/2005}
\textsc{Bradley, R.~C.} (2005).
\newblock Basic properties of strong mixing conditions. a survey and some open
  questions.
\newblock \emph{Probability surveys} \textbf{2}(2), 107--144.

\bibitem[{Br{\'e}maud(2013)}]{bremaud2013markov}
\textsc{Br{\'e}maud, P.} (2013).
\newblock \emph{Markov chains: Gibbs fields, Monte Carlo simulation, and
  queues}, vol.~31.
\newblock Springer Science \& Business Media.

\bibitem[{Cai \emph{et~al.}(2017)Cai, Neveu, Baxter, Byrne \&
  Aazhang}]{cai2017inferring}
\textsc{Cai, Z., Neveu, C.~L., Baxter, D.~A., Byrne, J.~H. \& Aazhang, B.}
  (2017).
\newblock Inferring neuronal network functional connectivity with directed
  information.
\newblock \emph{Journal of neurophysiology} \textbf{118}(2), 1055--1069.

\bibitem[{C\'enac \emph{et~al.}(2012)C\'enac, Chauvin, Paccaut \&
  Pouyanne}]{CCPP}
\textsc{C\'enac, P., Chauvin, B., Paccaut, F. \& Pouyanne, N.} (2012).
\newblock Variable length {M}arkov chains and dynamical sources.
\newblock \emph{S\'eminaire de Probabilit\'es XLIV, Lecture Notes in Math.}
  \textbf{2046}, 1--39.

\bibitem[{C{\'e}nac \emph{et~al.}(2018)C{\'e}nac, Chauvin, Paccaut \&
  Pouyanne}]{cenacetal/2018}
\textsc{C{\'e}nac, P., Chauvin, B., Paccaut, F. \& Pouyanne, N.} (2018).
\newblock Characterization of stationary probability measures for variable
  length markov chains.
\newblock \emph{arXiv preprint arXiv:1807.01075} .

\bibitem[{C\'{e}nac \emph{et~al.}(2018)C\'{e}nac, Le~Ny, de~Loynes \&
  Offret}]{lenyeta/2018}
\textsc{C\'{e}nac, P., Le~Ny, A., de~Loynes, B. \& Offret, Y.} (2018).
\newblock Persistent random walks. {I}. {R}ecurrence versus transience.
\newblock \emph{J. Theoret. Probab.} \textbf{31}(1), 232--243.
\newblock \urlprefix\url{https://doi.org/10.1007/s10959-016-0714-4}.

\bibitem[{C\'{e}nac \emph{et~al.}(2019)C\'{e}nac, Le~Ny, de~Loynes \&
  Offret}]{lenyetal/2019}
\textsc{C\'{e}nac, P., Le~Ny, A., de~Loynes, B. \& Offret, Y.} (2019).
\newblock Persistent {R}andom {W}alks. {II}. {F}unctional {S}caling {L}imits.
\newblock \emph{J. Theoret. Probab.} \textbf{32}(2), 633--658.
\newblock \urlprefix\url{https://doi.org/10.1007/s10959-018-0852-y}.

\bibitem[{De~Santis \& Piccioni(2012)}]{Desantis/piccioni/2012}
\textsc{De~Santis, E. \& Piccioni, M.} (2012).
\newblock Backward coalescence times for perfect simulation of chains with
  infinite memory.
\newblock \emph{J. Appl. Probab.} \textbf{49}(2), 319--337.

\bibitem[{Doeblin \& Fortet(1937)}]{doeblin/fortet/1937}
\textsc{Doeblin, W. \& Fortet, R.} (1937).
\newblock Sur des cha\^{\i}nes \`a liaisons compl\`etes.
\newblock \emph{Bull. Soc. Math. France} \textbf{65}, 132--148.

\bibitem[{Dyson(1969)}]{dyson1969existence}
\textsc{Dyson, F.~J.} (1969).
\newblock Existence of a phase-transition in a one-dimensional ising
  ferromagnet.
\newblock \emph{Communications in Mathematical Physics} \textbf{12}, 91--107.

\bibitem[{{Fern{\'a}ndez} \emph{et~al.}(2011){Fern{\'a}ndez}, {Gallo} \&
  {Maillard}}]{fernandez/gallo/maillard/2011}
\textsc{{Fern{\'a}ndez}, R., {Gallo}, S. \& {Maillard}, G.} (2011).
\newblock Regular \$g\$-measures are not always gibbsian.
\newblock \emph{Electron. J. Probab.} \textbf{16}, 732--740.

\bibitem[{Fern{\'a}ndez \& Maillard(2005)}]{fernandez/maillard/2005}
\textsc{Fern{\'a}ndez, R. \& Maillard, G.} (2005).
\newblock Chains with complete connections: general theory, uniqueness, loss of
  memory and mixing properties.
\newblock \emph{J. Stat. Phys.} \textbf{118}(3-4), 555--588.
\newblock \urlprefix\url{http://dx.doi.org/10.1007/s10955-004-8821-5}.

\bibitem[{Furstenberg \& Furstenberg(1960)}]{furstenberg1960stationary}
\textsc{Furstenberg, H. \& Furstenberg, H.} (1960).
\newblock \emph{Stationary processes and prediction theory}.
\newblock 44. Princeton University Press.

\bibitem[{Gallesco \emph{et~al.}(2018)Gallesco, Gallo \&
  Takahashi}]{gallesco/gallo/takahashi/2018}
\textsc{Gallesco, C., Gallo, S. \& Takahashi, D.~Y.} (2018).
\newblock Dynamic uniqueness for stochastic chains with unbounded memory.
\newblock \emph{Stochastic Processes and their Applications} \textbf{128}(2),
  689--706.

\bibitem[{Gallo(2011)}]{gallo/2009}
\textsc{Gallo, S.} (2011).
\newblock Chains with unbounded variable length memory: perfect simulation and
  visible regeneration scheme.
\newblock \emph{Adv. Appl. Prob.} \textbf{43}(3).

\bibitem[{Gallo \& Garcia(2013)}]{gallo/garcia/2013}
\textsc{Gallo, S. \& Garcia, N.~L.} (2013).
\newblock Perfect simulation for locally continuous chains of infinite order.
\newblock \emph{Stochastic Process. Appl.} \textbf{123}(5), 3877--3902.

\bibitem[{Gallo \& Paccaut(2013)}]{gallo/paccaut/2013}
\textsc{Gallo, S. \& Paccaut, F.} (2013).
\newblock On non-regular {$g$}-measures.
\newblock \emph{Nonlinearity} \textbf{26}(3), 763--776.
\newblock \urlprefix\url{http://dx.doi.org/10.1088/0951-7715/26/3/763}.

\bibitem[{Galves \emph{et~al.}(2012)Galves, Galves, Garcia, Garcia, Leonardi
  \emph{et~al.}}]{galves/galves/garcia/garcia/leonardi/2012}
\textsc{Galves, A., Galves, C., Garcia, J.~E., Garcia, N.~L., Leonardi, F.}
  \emph{et~al.} (2012).
\newblock Context tree selection and linguistic rhythm retrieval from written
  texts.
\newblock \emph{The Annals of Applied Statistics} \textbf{6}(1), 186--209.

\bibitem[{Garcia \& Gallo(2018)}]{gallo/garcia/2018}
\textsc{Garcia, N.~L. \& Gallo, S.} (2018).
\newblock Perfect simulation and convex mixture of context trees.
\newblock In: \emph{Advances in Mathematics and Applications}. Springer, pp.
  153--178.

\bibitem[{Garivier(2015)}]{garivier/2015}
\textsc{Garivier, A.} (2015).
\newblock Perfect simulation of processes with long memory: a ``coupling into
  and from the past'' algorithm.
\newblock \emph{Random Structures Algorithms} \textbf{46}(2), 300--319.
\newblock \urlprefix\url{https://doi.org/10.1002/rsa.20527}.

\bibitem[{Hulse(1991)}]{hulse/1991}
\textsc{Hulse, P.} (1991).
\newblock Uniqueness and ergodic properties of attractive {$g$}-measures.
\newblock \emph{Ergodic Theory Dynam. Systems} \textbf{11}(1), 65--77.
\newblock \urlprefix\url{http://dx.doi.org/10.1017/S0143385700006015}.

\bibitem[{Jacod \& Shiryaev(2002)}]{jacod/shiryaev/2002}
\textsc{Jacod, J. \& Shiryaev, A.~N.} (2002).
\newblock \emph{Limit theorems for stochastic processes}, vol. 288.
\newblock Berlin: Springer-Verlag, second ed.

\bibitem[{Johansson \& {\"O}berg(2003)}]{johansson/oberg/2003}
\textsc{Johansson, A. \& {\"O}berg, A.} (2003).
\newblock Square summability of variations of {$g$}-functions and uniqueness of
  {$g$}-measures.
\newblock \emph{Math. Res. Lett.} \textbf{10}(5-6), 587--601.

\bibitem[{Kalikow(1990)}]{kalikow/1990}
\textsc{Kalikow, S.} (1990).
\newblock Random {M}arkov processes and uniform martingales.
\newblock \emph{Israel J. Math.} \textbf{71}(1), 33--54.
\newblock \urlprefix\url{http://dx.doi.org/10.1007/BF02807249}.

\bibitem[{Keane(1972)}]{keane/1972}
\textsc{Keane, M.} (1972).
\newblock Strongly mixing {$g$}-measures.
\newblock \emph{Invent. Math.} \textbf{16}, 309--324.

\bibitem[{Kozlov(1974)}]{kozlov/1974}
\textsc{Kozlov, O.~K.} (1974).
\newblock A {G}ibbs description of a system of random variables.
\newblock \emph{Problemy Pereda\v ci Informacii} \textbf{10}(3), 94--103.

\bibitem[{Ledrappier(1974)}]{Ledrappier}
\textsc{Ledrappier, F.} (1974).
\newblock Principe variationnel et syst\`emes dynamiques symboliques.
\newblock \emph{Z. Wahrscheinlichskeitstheorie Verw. Geb.} \textbf{30},
  185--202.

\bibitem[{Lindvall(1992)}]{lindvall/1992}
\textsc{Lindvall, T.} (1992).
\newblock \emph{Lectures on the coupling method}.
\newblock Wiley Series in Probability and Mathematical Statistics: Probability
  and Mathematical Statistics. New York: John Wiley \& Sons Inc.
\newblock A Wiley-Interscience Publication.

\bibitem[{L{\"o}rinczi \emph{et~al.}(1998)L{\"o}rinczi, Maes \&
  Velde}]{lorinczi/1998}
\textsc{L{\"o}rinczi, J., Maes, C. \& Velde, K.~V.} (1998).
\newblock Transformations of gibbs measures.
\newblock \emph{Probability theory and related fields} \textbf{112}(1),
  121--147.

\bibitem[{Oliveira(2015)}]{imbuzeiro/2015}
\textsc{Oliveira, R.~I.} (2015).
\newblock Stochastic processes with random contexts: a characterization and
  adaptive estimators for the transition probabilities.
\newblock \emph{IEEE Trans. Inform. Theory} \textbf{61}(12), 6910--6925.
\newblock \urlprefix\url{https://doi.org/10.1109/TIT.2015.2496200}.

\bibitem[{Onicescu \& Mihoc(1935)}]{onicescu/mihoc/1935}
\textsc{Onicescu, O. \& Mihoc, G.} (1935).
\newblock Sur les cha\^ines de variables statistiques.
\newblock \emph{Bull. Sci. Math} \textbf{59}(2), 174--192.

\bibitem[{Rissanen(1983)}]{rissanen/1983}
\textsc{Rissanen, J.} (1983).
\newblock A universal data compression system.
\newblock \emph{IEEE Trans. Inform. Theory} \textbf{29}(5), 656--664.

\bibitem[{Shields(1996)}]{shields/1996}
\textsc{Shields, P.~C.} (1996).
\newblock \emph{The ergodic theory of discrete sample paths}, vol.~13 of
  \emph{Graduate Studies in Mathematics}.
\newblock Providence, RI: American Mathematical Society.

\bibitem[{Thorisson(2000)}]{thorisson/2000}
\textsc{Thorisson, H.} (2000).
\newblock \emph{Coupling, stationarity, and regeneration}.
\newblock Probability and its Applications. New York: Springer-Verlag.

\bibitem[{Tong \& Van~Handel(2014)}]{tong/handel/2014}
\textsc{Tong, X.~T. \& Van~Handel, R.} (2014).
\newblock Conditional ergodicity in infinite dimension.
\newblock \emph{The Annals of Probability} \textbf{42}(6), 2243--2313.

\bibitem[{Van~Enter \emph{et~al.}(1993{\natexlab{a}})Van~Enter, Fern{\'a}ndez
  \& Sokal}]{enter/fernandez/sokal/1993}
\textsc{Van~Enter, A.~C., Fern{\'a}ndez, R. \& Sokal, A.~D.}
  (1993{\natexlab{a}}).
\newblock Regularity properties and pathologies of position-space
  renormalization-group transformations: Scope and limitations of gibbsian
  theory.
\newblock \emph{Journal of Statistical Physics} \textbf{72}(5-6), 879--1167.

\bibitem[{Van~Enter \emph{et~al.}(1993{\natexlab{b}})Van~Enter, Fern{\'a}ndez
  \& Sokal}]{van1993regularity}
\textsc{Van~Enter, A.~C., Fern{\'a}ndez, R. \& Sokal, A.~D.}
  (1993{\natexlab{b}}).
\newblock Regularity properties and pathologies of position-space
  renormalization-group transformations: Scope and limitations of gibbsian
  theory.
\newblock \emph{Journal of Statistical Physics} \textbf{72}(5-6), 879--1167.

\bibitem[{Verbitskiy(2015)}]{verbitskiy/15}
\textsc{Verbitskiy, E.} (2015).
\newblock Hidden gibbs models: Theory and applications.
\newblock \emph{Unpublished notes} .

\bibitem[{Walters(1975)}]{walters/1975}
\textsc{Walters, P.} (1975).
\newblock Ruelle's operator theorem and {$g$}-measures.
\newblock \emph{Trans. Amer. Math. Soc.} \textbf{214}, 375--387.
\newblock \urlprefix\url{https://doi.org/10.2307/1997113}.

\end{thebibliography}

\vskip 10pt
\noindent Ricardo F. Ferreira \\
{\sc Departamento de Estat\'istica, \\Universidade Federal de S\~ao Carlos} \\
{\tt rferreira@ufscar.br}
\vskip 10pt
\noindent Sandro Gallo \\
{\sc Departamento de Estat\'istica, \\Universidade Federal de S\~ao Carlos} \\
{\tt sandro.gallo@ufscar.br}
\vskip 10pt
\noindent Fr\'ed\'eric Paccaut \\
{\sc Laboratoire Ami\'enois de Math\'ematiques Fondamentales et Appliqu\'ees cnrs umr 7352,} \\
{\sc Universit\'e de Picardie Jules Verne} \\
{\tt frederic.paccaut@u-picardie.fr}

\end{document}